\documentclass[12pt]{amsart}

\title{Complexes of modules over hierarchical groups}
\author{Ioannis Emmanouil and Olympia Talelli}
\thanks{Research funded by the Hellenic Foundation for Research 
and Innovation (H.F.R.I.) under the "3rd Call for H.F.R.I.\ 
Research Projects to Support Faculty Members and Researchers", 
project number 24921}

\newtheorem{Lemma}{Lemma}[section]
\newtheorem{Proposition}[Lemma]{Proposition}
\newtheorem{Theorem}[Lemma]{Theorem}
\newtheorem{Corollary}[Lemma]{Corollary}

\begin{document}

\begin{abstract}
Let $\overline{\mathfrak{F}}$ be the group class obtained from 
the class $\mathfrak{F}$ of finite groups by iterated applications 
of Kropholler's operation ${\scriptstyle{{\bf LH}}}$ and Talelli's 
operation $\Phi$. We describe the structure of the cokenels of 
acyclic complexes of projective modules over the group algebra of 
$\overline{\mathfrak{F}}$-groups with coefficients in a commutative 
ring. If $G$ is an $\overline{\mathfrak{F}}$-group, we show that 
any acyclic complex of projective (respectively, flat) 
$\mathbb{Q}G$-modules is contractible (respectively, pure acyclic). 
If $G$ is torsion-free, the same conclusions hold for acyclic 
complexes of projective or flat $\mathbb{Z}G$-modules. Analogous 
results are valid for acyclic complexes of injective modules. We
present some applications regarding modules that admit complete 
resolutions, groups with periodic cohomology after some steps and 
the relation between Gorenstein and ordinary homological dimensions
for modules over group algebras.
\end{abstract}

\maketitle
\tableofcontents

\addtocounter{section}{-1}
\section{Introduction}

\noindent
Given a pointed space $X$, its loop space $\Omega X$ consists of all 
continuous paths that start and end at the base point. Iterating this 
construction, we obtain the higher loop spaces $(\Omega^nX)_{n \geq 0}$
of $X$, where we set $\Omega^0X=X$. An infinite loop space is a pointed 
space $X$ that can be infinitely de-looped, in the following sense: There 
is a sequence of pointed spaces $(X_n)_{n \leq 0}$, such that $X_0=X$ and 
each $X_n$ is homotopy equivalent to the loop space of $X_{n-1}$, so that 
$X \simeq \Omega X_{-1}$, $X_{-1} \simeq \Omega X_{-2}, \ldots$; see, 
for example, \cite{May}.

There is an analogue of this homotopy-theoretic situation in algebra.
For any module $M$ over a ring $R$, we can define the sequence of 
syzygy modules $(\Omega^nM)_{n \geq 0}$. Here, $\Omega^0M=M$ and the 
modules $\Omega^nM$, $n \geq 0$, are defined as the cokernels in a 
projective resolution of $M$; they are uniquely determined by $M$, 
up to addition of a projective module. The algebraic analogue of 
an infinite loop space is a module $M$, that appears as a cokernel 
of a doubly infinite acyclic complex ${\bf P}$ of projective modules. 
In that case, there is a sequence of modules $(M_n)_{n \leq 0}$ (the 
sequence of cokernels of ${\bf P}$ to the right of $M$), so that 
$M_0=M$ and $M_n = \Omega^i M_{n-i}$ for all $n \leq 0$ and $i \geq 1$.
Such doubly infinite complexes appear in group theory (Tate cohomology, 
cohomological periodicity, free actions on finite dimensional homotopy 
spheres), representation theory (maximal Cohen-Macaulay modules) and 
algebraic geometry (Grothendieck duality, singularity theory).

Let ${\mathcal P}(R)$ be the class of modules that appear as 
cokernels of acyclic complexes of projective modules. Not all 
modules are expected to be ${\mathcal P}(R)$-modules, since such 
modules are necessarily isomorphic to submodules of free modules. 
In fact, a necessary and sufficient condition for all modules to be 
contained in ${\mathcal P}(R)$ is that all modules be embeddable into 
free modules. The Faith-Walker theorem \cite{FW} implies  that the 
latter condition is equivalent to the ring $R$ being quasi-Frobenius
(as it implies that all injective modules are projective).

On the other hand, all projective modules are contained in 
${\mathcal P}(R)$. One could ask whether any ${\mathcal P}(R)$-module 
is necessarily projective, i.e.\ whether any acyclic complex of 
projective modules is necessarily contractible. If all cokernels of 
an acyclic complex of projective modules are flat, then these cokernels 
are projective; cf.\ \cite[Theorem 2.5]{BG}, \cite[Theorem 8.6]{N} and 
\cite[Proposition 7.6]{CH}. It follows that ${\mathcal P}(R)$ coincides 
with the class of projective modules, if $R$ has finite weak global 
dimension. It is also known that ${\mathcal P}(R)$-modules are necessarily
projective if one of the following two conditions holds:
\newline
(a) $R$ is right coherent and $\mbox{pd}_{R^{op}}C < \infty$ for any 
finitely presented right $R$-module $C$,
\newline
(b) there is an integer $n \geq 0$, such that $\mbox{pd}_RC \leq n$ 
for any finitely presented $R$-module $C$.
\newline
The contractibility of acyclic complexes of projective modules under 
assumption (a) is proved in \cite[Theorems 2.2 and 4.5]{GI} (see also 
\cite{CET}), whereas the same conclusion under assumption (b) follows 
from \cite[Remark 2.10(i)]{ET25a}, by letting ${\tt U}$ be the class 
of projective modules therein.

In this paper, we enlarge the list of rings over which acyclic 
complexes of projective modules are necessarily contractible and
incorporate certain group algebras therein. We consider the group 
class $\overline{\mathfrak{F}}$ that is obtained from the class 
$\mathfrak{F}$ of finite groups by repeated applications of 
Kropholler's operation ${\scriptstyle{{\bf LH}}}$ \cite{Krop} and 
Talelli's operation $\Phi$ \cite{T07}; see Section 1 for details on 
this transfinite process. The class $\overline{\mathfrak{F}}$ is 
admittedly large. Its subclass ${\scriptstyle{{\bf LH}}}\mathfrak{F}$ 
is already closed under extensions, ascending unions, amalgamated 
free products and HNN extensions; it includes all finite groups, 
all free groups, all soluble groups and all groups of automorphisms 
of Noetherian modules over commutative rings; cf.\ \cite{Krop}. For 
an example of a group that lies outside $\overline{\mathfrak{F}}$, 
the reader is referred to \cite[Appendix]{ET25b}. Our main result is 
the following:

\medskip

\noindent
{\bf Theorem.}
{\em If $k$ is a commutative ring and $G \in \overline{\mathfrak{F}}$,
the following conditions are equivalent:

(i) The order of any finite subgroup of $G$ is a unit in $k$ and any 
acyclic complex of projective $k$-modules is contractible.

(ii) Any acyclic complex of projective $kG$-modules is contractible.}

\medskip

\noindent
We note that condition (i) in the Theorem above is satisfied if 
$k = \mathbb{Q}$ (and $G$ is any $\overline{\mathfrak{F}}$-group) 
or else if $k = \mathbb{Z}$ and $G$ is a torsion-free 
$\overline{\mathfrak{F}}$-group. We present some applications 
regarding modules over group algebras that admit complete 
projective resolutions (in particular, regarding groups with 
periodic cohomology after some steps) and elaborate on the 
relation between the Gorenstein and the ordinary cohomological 
dimensions, in Section 4.

Over any ring $R$, there is a close relation between the contractibility 
of acyclic complexes of projective modules, the pure acyclicity of acyclic 
complexes of flat modules and the contractibility of acyclic complexes of 
injective right $R$-modules; see, for example, \cite[Corollary 2.2]{ET25b}. 
Hence, there are versions of the theorem above, regarding acyclic complexes 
of flat or injective modules; these are detailed in Sections 2 and 3 of the 
paper. Appendices A and B deal with some technical properties of fibrant 
modules that are needed in Section 3.

\medskip

\noindent
{\em Notations and terminology.}
All rings considered in the paper are unital and associative. Unless 
otherwise specified, a module over a ring $R$ will be assumed to be 
a left $R$-module. We denote by ${\tt Proj}(R)$, ${\tt Flat}(R)$ and 
${\tt Inj}(R)$ the classes of projective, flat and injective $R$-modules.
We also denote by $\mathcal{P}(R)$, $\mathcal{F}(R)$ and $\mathcal{I}(R)$ 
the class of modules that appear as cokernels/kernels of acyclic 
complexes of projective, flat and injective $R$-modules.

\section{Preliminaries}

\noindent
In this Section, we record some preliminary results that will be used 
in the text, regarding the hierarchical structure of groups in the class 
$\overline{\mathfrak{F}}$, the notion of Benson's cofibrant modules and 
some basic properties of the cotorsion pair that is induced by these modules.

\medskip

\noindent
{\sc I.\ The group class $\overline{\mathfrak{F}}$.}
We only consider group classes that are closed under isomorphisms 
and contain the trivial group. First of all, we recall the operation 
${\scriptstyle{{\bf LH}}}$ that was introduced by Kropholler in 
\cite{Krop}. If $\mathfrak{C}$ is any group class, we define for each 
ordinal $\alpha$ the group class 
${\scriptstyle{{\bf H}}}_{\alpha}\mathfrak{C}$ using transfinite 
induction, as follows: We let
${\scriptstyle{{\bf H}}}_0\mathfrak{C} = \mathfrak{C}$. For an ordinal 
$\alpha >0$, the class ${\scriptstyle{{\bf H}}}_{\alpha}\mathfrak{C}$ 
consists of those groups $G$ which admit a cellular action on a finite 
dimensional contractible CW-complex $X$, in such a way that each isotropy
subgroup of the action belongs to 
${\scriptstyle{{\bf H}}}_{\beta}\mathfrak{C}$ for some ordinal
$\beta < \alpha$. We say that a group belongs to 
${\scriptstyle{{\bf H}}}\mathfrak{C}$ if it belongs to
${\scriptstyle{{\bf H}}}_{\alpha}\mathfrak{C}$ for some $\alpha$. The
class ${\scriptstyle{{\bf LH}}}\mathfrak{C}$ consists of those groups 
$G$, all of whose finitely generated subgroups $H$ are contained in an
${\scriptstyle{{\bf H}}}\mathfrak{C}$-subgroup $K=K(H) \subseteq G$.
If the class $\mathfrak{C}$ is subgroup-closed, then the class
${\scriptstyle{{\bf H}}}\mathfrak{C}$ is subgroup-closed as well. In 
that case, ${\scriptstyle{{\bf LH}}}\mathfrak{C}$ consists of those 
groups all of whose finitely generated subgroups are 
${\scriptstyle{{\bf H}}}\mathfrak{C}$-groups; hence, the class
${\scriptstyle{{\bf LH}}}\mathfrak{C}$ is also subgroup-closed.

We are also interested in the operation $\Phi$ that was introduced by 
Talelli \cite{T07}, for the class of finite groups; see also \cite{Bi,MS}. 
If $k$ is a commutative ring and $\mathfrak{C}$ is a group class, then 
we define $\Phi(k)\mathfrak{C}$, the result of applying the operation 
$\Phi(k)$ to $\mathfrak{C}$, as the class consisting of those groups 
$G$, for which the following two conditions are equivalent for any 
$kG$-module $M$:
\newline
(i) $\mbox{pd}_{kG}M < \infty$,
\newline
(ii) there is an integer $n$, such that
$\mbox{pd}_{kC}\mbox{res}_C^GM \leq n$ for any
$\mathfrak{C}$-subgroup $C \subseteq G$.
\newline
We shall also consider the version of the operation $\Phi(k)$ for the 
flat (resp.\ injective) dimension and define for any group class 
$\mathfrak{C}$ the class $\Phi(k)_{flat} \mathfrak{C}$ (resp.\ 
$\Phi(k)_{inj} \mathfrak{C}$) accordingly.

\begin{Lemma}
If $k$ is a commutative ring and $\mathfrak{C}$ is a subgroup-closed 
class of groups, then the classes $\Phi(k) \mathfrak{C}$, 
$\Phi(k)_{flat} \mathfrak{C}$ and $\Phi(k)_{inj} \mathfrak{C}$ are 
subgroup-closed as well.
\end{Lemma}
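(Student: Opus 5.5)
Let $G \in \Phi(k)\mathfrak{C}$ and $H \subseteq G$ a subgroup. We have to show that for any $kH$-module $N$ with $\mbox{pd}_{kC}\mbox{res}^H_C N \leq n$ for all $\mathfrak{C}$-subgroups $C \subseteq H$, we get $\mbox{pd}_{kH}N < \infty$. The converse implication (i)$\Rightarrow$(ii) holds for any group. Indeed, if $\mbox{pd}_{kH}N = m$, then restriction to any subgroup $C$ preserves projectivity, since $kH$ is free over $kC$. Hence $\mbox{pd}_{kC}\mbox{res}^H_C N \leq m$ for all $C$.

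For the nontrivial implication we pass to the induced module $M = \mbox{ind}_H^G N = kG \otimes_{kH} N$. Let $C \subseteq G$ be a $\mathfrak{C}$-subgroup. By the Mackey formula,
\[ \mbox{res}^G_C \mbox{ind}^G_H N \cong \bigoplus_{CgH} \mbox{ind}^C_{C \cap gHg^{-1}} \mbox{res}^{gHg^{-1}}_{C\cap gHg^{-1}} ({}^g N). \]
Each subgroup $C \cap gHg^{-1}$ is conjugate to $g^{-1}Cg \cap H$. This is a subgroup of $H$, and it lies in $\mathfrak{C}$ because $\mathfrak{C}$ is subgroup-closed and closed under isomorphisms. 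Hence the restriction of $N$ to it has projective dimension at most $n$. Induction is exact and preserves projectives, so it does not raise projective dimension. Direct sums do not raise the bound either. So $\mbox{pd}_{kC}\mbox{res}^G_C M \leq n$ for every $\mathfrak{C}$-subgroup $C$, and since $G \in \Phi(k)\mathfrak{C}$ we get $\mbox{pd}_{kG} M < \infty$.

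Finally, restrict back to $H$. The module $\mbox{res}^G_H \mbox{ind}^G_H N$ contains $N$ as a direct summand, namely the summand for the trivial double coset. Restriction preserves finite projective dimension, so $\mbox{pd}_{kH} N \leq \mbox{pd}_{kG} M < \infty$, as required.

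For the flat version the same argument works, since induction and restriction are exact and preserve flatness; for induction this uses $kG\otimes_{kH}-$ with $kG$ free as a right $kH$-module. For the injective version, use coinduction $\mbox{Hom}_{kH}(kG,-)$ in place of induction. It is exact and preserves injectives, being right adjoint to the exact restriction functor. Restriction also preserves injectives, because $kG$ is free over $kH$ on both sides, so coinduction from $H$ to $G$ agrees with the right adjoint of restriction. The Mackey decomposition of $\mbox{res}^G_C\mbox{coind}^G_H N$ is now a direct product of coinduced modules. Products of modules of injective dimension $\le n$ still have injective dimension $\le n$, and $N$ is a direct summand (a factor) of $\mbox{res}^G_H\mbox{coind}^G_H N$.

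The main point to check is this Mackey formula for coinduction with products. One also needs that finite injective dimension over $kC$ is bounded uniformly, which comes directly from the hypothesis bound $n$.
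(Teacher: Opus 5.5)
Your proof is correct and is the paper's argument: induce $N$ up to $G$, use Mackey's formula together with the subgroup- and isomorphism-closure of $\mathfrak{C}$ to bound the dimension over every $\mathfrak{C}$-subgroup by $n$, then restrict back to $H$ and use that $N$ is a direct summand. Your coinduction/product version for the injective case is what the paper's ``similarly'' requires. The one muddled step is your stated reason that restriction preserves injectives; the clean reason is that $\mbox{res}^G_H$ is right adjoint to the exact functor $\mbox{ind}^G_H$.
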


\vspace{-0.05in}

\noindent
{\em Proof.} We provide the argument regarding $\Phi(k) \mathfrak{C}$ 
and note that we can similarly take care of the flat and injective 
versions. Let $G$ be a $\Phi(k) \mathfrak{C}$-group and consider a 
subgroup $H \subseteq G$. In order to show that 
$H \in \Phi(k) \mathfrak{C}$, assume that $M$ is a $kH$-module and 
$n \geq 0$ is such that $\mbox{pd}_{kC}\mbox{res}_C^HM \leq n$ for 
any $\mathfrak{C}$-subgroup $C \subseteq H$. We claim that the 
induced $kG$-module $\mbox{ind}_H^GM$ has finite projective 
dimension. To verify this, it suffices to show that 
$\mbox{pd}_{kC}\mbox{res}_C^G\mbox{ind}_H^GM \leq n$ for any 
$\mathfrak{C}$-subgroup $C \subseteq G$. Since $\mathfrak{C}$ is closed 
under subgroups and isomorphisms, this follows from our assumption on 
$M$ and Mackey's double coset formula (cf.\ 
\cite[Chapter III, Proposition 5.6(ii)]{Br}). Having proved that 
$\mbox{pd}_{kG} \mbox{ind}_H^GM < \infty$, we may conclude that 
$\mbox{pd}_{kH} \mbox{res}_H^G\mbox{ind}_H^GM < \infty$ as well. Since 
$M$ is a direct summand of $\mbox{res}_H^G\mbox{ind}_H^GM$, it follows 
that $\mbox{pd}_{kH}M < \infty$. \hfill $\Box$

\medskip

\noindent
Assume that $k \longrightarrow K$ is a homomorphism of commutative rings, 
such that the multiplication map $m : K \otimes_kK \longrightarrow K$ is 
bijective.\footnote{This condition is equivalent to (a) 
$k \longrightarrow K$ is an epimorphism in the category of commutative 
rings and also to (b) $f_1 = f_2 : K \longrightarrow K \otimes_kK$, 
where $f_1(x) = x \otimes 1$ and $f_2(x) = 1 \otimes x$ for all 
$x \in K$.} In this case, for any $K$-module $M$ the canonical map
$K \otimes_kM \longrightarrow M$ is bijective as well. It follows
that for any other $K$-module $N$ we have an equality  
$\mbox{Hom}_K(M,N) = \mbox{Hom}_k(M,N)$.

\begin{Lemma}
Let $k \longrightarrow K$ be a flat homomorphism of commutative rings, 
such that the multiplication map $m : K \otimes_kK \longrightarrow K$ 
is bijective, and consider a group class $\mathfrak{C}$.

(i) If $\mbox{pd}_kK < \infty$, then 
$\Phi(k) \mathfrak{C} \subseteq \Phi(K) \mathfrak{C}$.

(ii) $\Phi(k)_{flat} \mathfrak{C} \subseteq \Phi(K)_{flat} \mathfrak{C}$

(iii) $\Phi(k)_{inj} \mathfrak{C} \subseteq \Phi(K)_{inj} \mathfrak{C}$.
\end{Lemma}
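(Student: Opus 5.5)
Fix a $\Phi(k)\mathfrak{C}$-group $G$ and a $KG$-module $M$. We show that conditions (i) and (ii) in the definition of $\Phi(K)\mathfrak{C}$ are equivalent for $M$. The implication (i)$\Rightarrow$(ii) holds for any group: if $\mbox{pd}_{KG}M = n$, then $\mbox{pd}_{KC}\mbox{res}^G_CM \leq n$ for every subgroup $C$, because $KG$ is free over $KC$. The work is in (ii)$\Rightarrow$(i). The plan is to regard $M$ as a $kG$-module via $k \to K$, apply the hypothesis $G \in \Phi(k)\mathfrak{C}$ there, and then transfer the conclusion back to $KG$.

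\emph{Step 1: from $K$-dimensions to $k$-dimensions.} Assume $\mbox{pd}_{KC}\mbox{res}^G_CM \leq n$ for all $\mathfrak{C}$-subgroups $C$, and put $d = \mbox{pd}_kK$. Then $KC = K \otimes_k kC$ is a free $K$-module, so $\mbox{pd}_{kC}KC \leq d$ by base change along the free map $k \to kC$ (a $k$-projective resolution of $K$ tensored with $kC$). Consequently every projective $KC$-module, being a summand of a free $KC$-module, has projective dimension at most $d$ over $kC$. A $KC$-projective resolution of length $n$, viewed over $kC$, together with the standard dimension-shifting argument then gives $\mbox{pd}_{kC}M \leq n+d$, uniformly in $C$. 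Since $G \in \Phi(k)\mathfrak{C}$, it follows that $\mbox{pd}_{kG}M < \infty$.

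\emph{Step 2: back to $KG$.} Choose a finite $kG$-projective resolution $P_\ast \to M$ and apply the exact functor $K \otimes_k -$, which is exact because $K$ is flat over $k$. This yields a finite resolution $K \otimes_k P_\ast \to K \otimes_k M$ by modules $K \otimes_k P_i$. These are projective $KG$-modules, since $K \otimes_k kG \cong KG$. Finally $K \otimes_k M \cong M$ as $KG$-modules, by the bijectivity of $K \otimes_k M \to M$ for $K$-modules noted before the Lemma; this map is $G$-equivariant. Hence $\mbox{pd}_{KG}M < \infty$.

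\emph{Flat and injective versions.} Part (ii) is proved in the same way with flat dimensions, and the hypothesis $\mbox{pd}_kK < \infty$ is not needed. In Step 1, a flat $KC$-module is flat over $kC$: for a right $kC$-module $X$ we have $X \otimes_{kC} F \cong (X \otimes_k K) \otimes_{KC} F$, and $K$ is $k$-flat. Hence the flat dimensions do not increase under restriction of scalars. Step 2 uses $K \otimes_k -$ again, which sends flat $kG$-modules to flat $KG$-modules.

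Part (iii) uses injective dimensions. Restricting an injective $KC$-module $E$ to $kC$ gives an injective module, because
\[
\mbox{Hom}_{kC}(-,E) \cong \mbox{Hom}_{KC}(K \otimes_k -, E),
\]
and this composite is exact since $K$ is $k$-flat. So Step 1 again needs no finiteness hypothesis. For Step 2 we use the functor $\mbox{Hom}_k(K,-)$ on a finite $kG$-injective resolution $M \to I^\ast$.

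\emph{Main obstacle.} The delicate point is Step 2 in the injective case. First, $\mbox{Hom}_k(K,-)$ need not be exact on an arbitrary resolution. Second, we need $\mbox{Hom}_k(K,M) \cong M$. I would try to settle both as follows. The second holds by the remark before the Lemma: $\mbox{Hom}_k(K,M) = \mbox{Hom}_K(K,M) = M$ for a $K$-module $M$. For the first, I would show that $\mbox{Ext}^i_k(K,N)=0$ for $i>0$ whenever $N$ is a $K$-module. This follows from flat base change, $\mbox{Ext}^i_k(K,N) \cong \mbox{Ext}^i_K(K \otimes_k K, N) = \mbox{Ext}^i_K(K,N) = 0$. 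It is then enough to dimension-shift along the resolution and check that the cosyzygies stay within a class where the higher $\mbox{Ext}_k(K,-)$ vanish. If that fails, I would instead test the vanishing of $\mbox{Ext}_{KG}(-,M)$ in high degrees by adjunction with $K \otimes_k -$ applied to the $kG$-modules.
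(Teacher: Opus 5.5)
Your parts (i) and (ii), and the first half of (iii), follow the paper's proof. You pass to $k$-dimensions over the $\mathfrak{C}$-subgroups (shifting by $\mbox{pd}_kK$ only in the projective case), use $G\in\Phi(k)\mathfrak{C}$, and come back to $KG$ via the exact functor $K\otimes_k-$ and $K\otimes_kM\cong M$. In (ii), your isomorphism $X\otimes_{kC}KC\cong X\otimes_kK$ is a more direct substitute for the paper's appeal to Lazard--Govorov.

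The real difference is the return step in (iii), and your coinduction route works. The check you left open is routine. The $kG$-modules $X$ with $\mbox{Ext}^i_k(K,X)=0$ for all $i>0$ include $M$, by your base-change computation. They also include all injective $kG$-modules, since these are $k$-injective. This class is closed under cokernels of monomorphisms, so every cosyzygy of $0\to M\to I^0\to\cdots\to I^t\to 0$ lies in it, and $\mbox{Hom}_k(K,-)$ keeps the sequence exact. You should also state two facts explicitly. First, $\mbox{Hom}_k(K,I^j)$, with $K$ acting on the source and $G$ on the target, is $\mbox{Hom}_{kG}(KG,I^j)$. It is therefore $KG$-injective, being the right adjoint of the exact restriction functor applied to an injective. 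Second, evaluation at $1$ gives a $KG$-isomorphism $\mbox{Hom}_k(K,M)=\mbox{Hom}_K(K,M)\cong M$. Your fallback plan is not needed.

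The paper avoids Ext and change of rings altogether. By your own Step 1 applied to $G$, a $KG$-injective resolution of $M$ is also a $kG$-injective resolution. So its $t$-th cosyzygy $N$, where $t=\mbox{id}_{kG}M$, is $kG$-injective. Since $\mbox{Hom}_{KG}(L,N)=\mbox{Hom}_{kG}(L,N)$ for all $KG$-modules $L$, the functor $\mbox{Hom}_{KG}(-,N)$ is exact, i.e.\ $N$ is $KG$-injective.

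Both arguments rely on $k\to K$ being an epimorphism, in different ways. The paper uses the full faithfulness of restriction of scalars. You use $\mbox{Hom}_k(K,M)\cong M$ together with $\mbox{Ext}^{>0}_k(K,M)=0$. Both give $\mbox{id}_{KG}M\le\mbox{id}_{kG}M$. Your version is the formal dual of your projective argument. The paper's shortcut is not available in (i), because projective $KG$-modules need not be $kG$-projective when $\mbox{pd}_kK>0$; that is why both you and the paper use $K\otimes_k-$ there.
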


\vspace{-0.05in}

\noindent
{\em Proof.} 
(i) Let $G$ be a $\Phi(k) \mathfrak{C}$-group. In order to show that 
$G \in \Phi(K) \mathfrak{C}$, we consider a $KG$-module and a non-negative
integer $n$, such that $\mbox{pd}_{KC}\mbox{res}_C^GM \leq n$ for any 
$\mathfrak{C}$-subgroup $C \subseteq G$. Since $\mbox{pd}_kK = m < \infty$,
it follows that $\mbox{pd}_{kC}KC = m$ and hence 
$\mbox{pd}_{kC}\mbox{res}_C^GM \leq n+m$ for any $\mathfrak{C}$-subgroup 
$C \subseteq G$. But $G \in \Phi(k) \mathfrak{C}$ and hence we conclude 
that $\mbox{pd}_{kG}M < \infty$. The $k$-flatness of $K$ implies that
$\mbox{pd}_{KG}(K \otimes_kM) < \infty$, so that $\mbox{pd}_{KG}M < \infty$.

(ii) We proceed along the same lines as above. Here, the $k$-flatness of 
$K$ implies that $KG$ is flat as a $kG$-module, in view of the Lazard-Govorov 
theorem; cf.\ \cite[Theorem 4.34]{L}. Hence, any flat $KG$-module is also 
flat as a $kG$-module.

(iii) Let $G$ be a $\Phi(k)_{inj} \mathfrak{C}$-group. In order to show 
that $G \in \Phi(K)_{inj} \mathfrak{C}$, consider a $KG$-module $M$ and 
a non-negative integer $n$, such that $\mbox{id}_{KC}\mbox{res}_C^GM \leq n$ 
for any $\mathfrak{C}$-subgroup $C \subseteq G$. Since $K$ is $k$-flat, 
any injective $KC$-module is also injective as a $kC$-module and hence 
$\mbox{id}_{kC}\mbox{res}_C^GM \leq n$ for any $\mathfrak{C}$-subgroup 
$C \subseteq G$. But $G \in \Phi(k)_{inj} \mathfrak{C}$ and hence
$\mbox{id}_{kG}M < \infty$, say $\mbox{id}_{kG}M =t$. We consider an 
$KG$-injective resolution ${\bf I}$ of $M$ and let $N$ be its $t$-th 
kernel. Since any injective $KG$-module is also injective as a $kG$-module, 
${\bf I}$ is also a $kG$-injective resolution of $M$; it follows that the 
$KG$-module $N$ is injective as a $kG$-module. On the other hand, for any 
$KG$-module $L$ we have an equality 
$\mbox{Hom}_{KG}(L,N) = \mbox{Hom}_{kG}(L,N)$, so that $N$ is injective 
as a $KG$-module. It follows that $\mbox{id}_{KG}M \leq t < \infty$, as 
needed. \hfill $\Box$

\begin{Corollary}
For any group class $\mathfrak{C}$, we have
$\Phi(\mathbb{Z}) \mathfrak{C} \subseteq \Phi(\mathbb{Q}) \mathfrak{C}$,
$\Phi(\mathbb{Z})_{flat} \mathfrak{C} \subseteq 
 \Phi(\mathbb{Q})_{flat} \mathfrak{C}$
and 
$\Phi(\mathbb{Z})_{inj} \mathfrak{C} \subseteq 
 \Phi(\mathbb{Q})_{inj} \mathfrak{C}$.
\hfill $\Box$
\end{Corollary}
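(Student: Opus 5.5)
The Corollary follows by specializing Lemma 1.2 to the inclusion $k = \mathbb{Z} \longrightarrow \mathbb{Q} = K$. So the plan is to check, one by one, the hypotheses that Lemma 1.2 imposes on this ring homomorphism.

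\emph{Flatness.} $\mathbb{Q}$ is flat over $\mathbb{Z}$, being a torsion-free abelian group; equivalently, it is a localization $S^{-1}\mathbb{Z}$ with $S = \mathbb{Z}\setminus\{0\}$.

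\emph{Bijectivity of the multiplication map.} The map $m : \mathbb{Q}\otimes_{\mathbb{Z}}\mathbb{Q} \longrightarrow \mathbb{Q}$ is bijective. In $\mathbb{Q}\otimes_{\mathbb{Z}}\mathbb{Q}$ every elementary tensor satisfies
$$(a/b)\otimes(c/d) = (ac/bd)\otimes 1,$$
since $(a/b)\otimes(c/d) = (ac/bd)\cdot b \otimes (1/b)\cdot(1/d)\cdot b \ldots$; more cleanly, $x\otimes (c/d) = (x/d)\otimes c = (cx/d)\otimes 1$. Hence every element of the tensor product has the form $x\otimes 1$, so $m$ is injective, and it is obviously surjective. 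Alternatively, for a localization one has $S^{-1}\mathbb{Z}\otimes_{\mathbb{Z}} S^{-1}\mathbb{Z} \cong S^{-1}(S^{-1}\mathbb{Z}) = S^{-1}\mathbb{Z}$.

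\emph{Finite projective dimension.} For part (i) we also need $\mbox{pd}_{\mathbb{Z}}\mathbb{Q} < \infty$. Since $\mathbb{Z}$ is hereditary (a PID), every $\mathbb{Z}$-module has projective dimension at most $1$; in particular $\mbox{pd}_{\mathbb{Z}}\mathbb{Q} = 1$.

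With all hypotheses verified, Lemma 1.2(i), (ii) and (iii) give the three inclusions $\Phi(\mathbb{Z})\mathfrak{C} \subseteq \Phi(\mathbb{Q})\mathfrak{C}$, $\Phi(\mathbb{Z})_{flat}\mathfrak{C} \subseteq \Phi(\mathbb{Q})_{flat}\mathfrak{C}$ and $\Phi(\mathbb{Z})_{inj}\mathfrak{C} \subseteq \Phi(\mathbb{Q})_{inj}\mathfrak{C}$, for any group class $\mathfrak{C}$.

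There is no real obstacle here. The only point needing a moment's thought is the bijectivity of $m$, which the localization argument settles.
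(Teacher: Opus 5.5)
Your proposal is correct and is exactly the paper's argument: Corollary 1.3 is Lemma 1.2 applied to $\mathbb{Z} \longrightarrow \mathbb{Q}$. The hypotheses to check are flatness of $\mathbb{Q}$, bijectivity of $\mathbb{Q}\otimes_{\mathbb{Z}}\mathbb{Q} \longrightarrow \mathbb{Q}$, and $\mbox{pd}_{\mathbb{Z}}\mathbb{Q} \leq 1$, and you verify all three. Delete the garbled first computation in the bijectivity step and keep only the clean identity $x\otimes(c/d) = (cx/d)\otimes 1$, which suffices.
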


\noindent
Having fixed the commutative ring $k$, we define a hierarchy of group 
classes ${\mathfrak F}(k)_{\alpha}$, where $\alpha$ is any ordinal 
number, as follows: 

(i) $\mathfrak{F}(k)_0 = \mathfrak{F}$ is the class of finite groups,

(ii) ${\mathfrak F}(k)_{\alpha +1} = 
      {\scriptstyle{{\bf LH}}}{\mathfrak F}(k)_{\alpha} 
      \cup \Phi(k) {\mathfrak F}(k)_{\alpha} \cup 
      \Phi(k)_{flat} {\mathfrak F}(k)_{\alpha}$
     for any ordinal $\alpha$ and 

(iii) $\mathfrak{F}(k)_{\alpha} = 
       \bigcup_{\beta < \alpha}\mathfrak{F}(k)_{\beta}$
      for any limit ordinal $\alpha$. 
\newline
Then, $\overline{\mathfrak{F}}(k)$ is defined as the class consisting 
of those groups which are contained in ${\mathfrak F}(k)_{\alpha}$, for 
some ordinal number $\alpha$. The operations ${\scriptstyle{{\bf LH}}}$, 
$\Phi(k)$ and $\Phi(k)_{flat}$ are continuous (cf.\ \cite[Lemma 1.3 
and 1.4]{ET25b}) and hence $\overline{\mathfrak{F}}(k)$ is the smallest 
class of groups that contains the class $\mathfrak{F}$ of finite groups 
and is invariant under the operation 
${\scriptstyle{{\bf LH}}} \cup \Phi(k) \cup \Phi(k)_{flat}$; it is the 
closure of $\mathfrak{F}$ under that operation. Since the operation 
${\scriptstyle{{\bf LH}}}$ preserves subgroup-closed classes, Lemma 1.1 
implies that the class $\overline{\mathfrak{F}}(k)$ is subgroup-closed. 
We denote the class $\overline{\mathfrak{F}}(\mathbb{Z})$ simply by 
$\overline{\mathfrak{F}}$ and note that Corollary 1.3 implies that 
$\overline{\mathfrak{F}} = \overline{\mathfrak{F}}(\mathbb{Z}) \subseteq 
 \overline{\mathfrak{F}}(\mathbb{Q})$. 
 
Analogously, using the operation ${\scriptstyle{{\bf LH}}} \cup \Phi(k)_{inj}$ 
in the previous paragraph (instead of the operation 
${\scriptstyle{{\bf LH}}} \cup \Phi(k) \cup \Phi(k)_{flat}$), we define the 
class $\widehat{\mathfrak{F}}(k)$ as the smallest class of groups that contains 
$\mathfrak{F}$ and remains invariant under that operation. Since
${\scriptstyle{{\bf LH}}}$ preserves subgroup-closed classes, Lemma 1.1 implies 
that the class $\widehat{\mathfrak{F}}(k)$ is also subgroup-closed. The class
$\widehat{\mathfrak{F}}(k)$ is a sub-class of $\overline{\mathfrak{F}}(k)$, since 
the operation $\Phi(k)_{inj}$ is a sub-operation of $\Phi(k)_{flat}$; cf.\ 
\cite[Lemma 1.5]{ER2}. We denote the class $\widehat{\mathfrak{F}}(\mathbb{Z})$ 
by $\widehat{\mathfrak{F}}$ and note that Corollary 1.3 implies that 
$\widehat{\mathfrak{F}} = \widehat{\mathfrak{F}}(\mathbb{Z}) \subseteq 
 \widehat{\mathfrak{F}}(\mathbb{Q})$. 

\medskip

\noindent
{\sc II.\ The diagonal action.}
We fix a commutative ring $k$ and a group $G$. We outline below certain properties 
resulting from the Hopf algebra structure of $kG$; for more details, the 
reader is referred to \cite[Chapter III]{Br}. The diagonal action of $G$ 
endows the tensor product $M \otimes_k N$ of two $kG$-modules $M,N$ with 
the structure of a $kG$-module; we define 
$g \cdot (x \otimes y) = gx \otimes gy \in M \otimes_k N$ for any $g \in G$, 
$x \in M$ and $y \in N$. The coinvariance of $M \otimes_kN$ under this 
action is the tensor product $M \otimes_{kG}N$. Analogously, the $k$-module 
$\mbox{Hom}_k(M,N)$ admits the structure of a $kG$-module, with the group 
$G$ acting (diagonally) as follows: For any $g \in G$, 
$f \in \mbox{Hom}_k(M,N)$ and $x \in M$
we define $(g \cdot f)(x) = gf(g^{-1}x)\in N$. The $G$-invariant 
submodule of $\mbox{Hom}_k(M,N)$ coincides with the $k$-module
$\mbox{Hom}_{kG}(M,N)$. 

We note that for any three $kG$-modules $L,M,N$ the natural isomorphism 
of $k$-modules
\[ \mbox{Hom}_k(L \otimes_k M,N) \simeq
   \mbox{Hom}_k(L, \mbox{Hom}_k(M,N)) \]
is $kG$-linear, where $G$ acts diagonally on the tensor product and the 
three Hom-groups that are involved. Hence, taking $G$-invariants, we 
conclude that the above isomorphism of $kG$-modules restricts to an 
isomorphism of $k$-modules
\[ \mbox{Hom}_{kG}(L \otimes_k M,N) \simeq 
   \mbox{Hom}_{kG} (L,\mbox{Hom}_k(M,N)) . \] 
It follows that the $kG$-module $L \otimes_k M$ is projective, if $L$ 
is projective (as a $kG$-module) and $M$ is $k$-projective. It also 
follows that the $kG$-module $\mbox{Hom}_k(M,N)$ is injective, if $M$ 
is $k$-flat and $N$ is injective (as a $kG$-module). The symmetric role 
played by the $kG$-modules $L,M$ in the above isomorphism implies that 
there is another natural isomorphism 
\[ \mbox{Hom}_{kG} (L,\mbox{Hom}_k(M,N)) \simeq 
   \mbox{Hom}_{kG} (M,\mbox{Hom}_k(L,N)) . \]
We conclude that the $kG$-module $\mbox{Hom}_k(M,N)$ is injective,
if $M$ is projective (as a $kG$-module) and $N$ is $k$-injective.

\medskip

\noindent
{\sc III.\ Benson's (co-)fibrant modules.}
Let $B(G,\mathbb{Z})$ be the $\mathbb{Z}G$-module of bounded 
functions from $G$ to $\mathbb{Z}$, that was considered in 
\cite{KT}. We note that $B(G,\mathbb{Z})$ is free as an abelian 
group; in fact, $B(G,\mathbb{Z})$ is $\mathbb{Z}H$-free for any 
finite subgroup $H \subseteq G$. The constant functions define a 
$\mathbb{Z}$-split $\mathbb{Z}G$-linear embedding 
$\iota : \mathbb{Z} \longrightarrow B(G,\mathbb{Z})$. An additive 
splitting is provided by evaluating functions at the identity 
element of $G$. The $kG$-module 
$B(G,k) = B(G,\mathbb{Z}) \otimes_{\mathbb{Z}}k$ is identified 
with the $kG$-module of all functions from $G$ to $k$ whose image 
if a finite set. It is a free $kH$-module for any finite subgroup 
$H \subseteq G$. The $\mathbb{Z}$-split $\mathbb{Z}G$-linear 
embedding $\iota$ induces a $k$-split $kG$-linear embedding
$\iota \otimes 1 : k \longrightarrow B(G,k)$.

Following Benson \cite{Ben}, we say that a $kG$-module $M$ is cofibrant
if the (diagonal) $kG$-module $M \otimes_k B(G,k)$ is projective. Benson
used these modules to construct an abelian model structure in a certain 
subcategory of the category of $kG$-modules (with cofibrant modules being
the cofibrant objects of the model structure). We denote by ${\tt Cof}(kG)$ 
the class of cofibrant $kG$-modules. Since the $kG$-module $B(G,k)$ is 
$k$-free, any projective $kG$-module is cofibrant, i.e.\ 
${\tt Proj}(kG) \subseteq {\tt Cof}(kG)$. On the other hand, we have 
${\tt Cof}(kG) \subseteq \mathcal{P}(kG)$: Iterating the operation 
$M \mapsto M \otimes_k B(G,k)$, we may construct a coresolution of a 
cofibrant module by projective $kG$-modules, with all cokernels of the 
coresolution cofibrant; cf.\ \cite{CK}. We say that a $kG$-module $M$ 
is fibrant if the (diagonal) $kG$-module $\mbox{Hom}_k(B(G,k),M)$ is 
injective; cf.\ \cite[$\S $4]{ER2}. Let ${\tt Fib}(kG)$ denote the 
class of fibrant $kG$-modules. Since $B(G,k)$ is $k$-free (and hence 
$k$-flat), all injective $kG$-modules are fibrant, so that 
${\tt Inj}(kG) \subseteq {\tt Fib}(kG)$. As shown in 
\cite[Proposition 4.2(iii)]{ER2}, any fibrant module is a kernel of 
an acyclic complex of injective $kG$-modules, all of whose kernels 
are fibrant; in particular, ${\tt Fib}(kG) \subseteq \mathcal{I}(kG)$.

If $H \subseteq G$ is a subgroup, then $B(H,k)$ is a direct summand 
of $B(G,k)$ as a $kH$-module; hence, any cofibrant (resp.\ fibrant) 
$kG$-module is also cofibrant (resp.\ fibrant) as a $kH$-module. 
Letting $H=1$ be the trivial subgroup of $G$, we conclude that any 
cofibrant (resp.\ fibrant) $kG$-module is $k$-projective (resp.\ 
$k$-injective). The following simple and certainly well-known result 
shows that the converse also holds, if the group $G$ is finite. In 
particular, the cofibrant $kG$-modules over a finite group coincide
with the {\em big lattices} considered in \cite{BB}.

\begin{Lemma}
Let $k$ be a commutative ring and consider a finite group $G$.

(i) A $kG$-module $M$ is cofibrant if and only if $M$ is $k$-projective.

(ii) A $kG$-module $M$ is fibrant if and only if $M$ is $k$-injective.
\end{Lemma}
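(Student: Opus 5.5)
The "only if" directions in both parts are already recorded just before the statement: restricting to the trivial subgroup shows that cofibrant (resp.\ fibrant) modules are $k$-projective (resp.\ $k$-injective). So the plan concerns only the converses, and for these I will use that, when $G$ is finite, $B(G,k)$ is simply the permutation module $kG$ itself. Indeed, every function $G \to k$ has finite image, so $B(G,k)$ is the module of all functions $G\to k$ with $G$ acting by translation. This is isomorphic to the coinduced module $\mbox{Hom}_k(kG,k)$, and for finite $G$ that is isomorphic to the free module $kG$ of rank one. This identification is also consistent with the remark that $B(G,k)$ is $kH$-free for every finite subgroup $H$.

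For (i), let $M$ be $k$-projective. The diagonal module $M \otimes_k kG$ is isomorphic, via the standard untwisting $x \otimes g \mapsto g^{-1}x \otimes g$, to $kG \otimes_k M$ with $G$ acting only on the left factor, i.e.\ to $\mbox{ind}_1^G M$. Alternatively, we can quote the consequence of the adjunction in II directly: $L \otimes_k M$ is projective when $L$ is $kG$-projective and $M$ is $k$-projective. Taking $L = kG$ shows that $M \otimes_k B(G,k) \simeq kG \otimes_k M$ is projective, so $M$ is cofibrant.

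For (ii), let $M$ be $k$-injective. Since $B(G,k)\simeq kG$ is a projective $kG$-module, the last observation of II (with $M$ there replaced by $kG$ and $N$ by our $M$) says that the diagonal module $\mbox{Hom}_k(kG,M)$ is injective. Hence $M$ is fibrant. This can also be seen directly: the isomorphism $\mbox{Hom}_{kG}(L,\mbox{Hom}_k(kG,M)) \simeq \mbox{Hom}_{kG}(kG,\mbox{Hom}_k(L,M)) \simeq \mbox{Hom}_k(L,M)$ is exact in $L$ because $M$ is $k$-injective.

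The only point that needs care is the $kG$-isomorphism $B(G,k)\simeq kG$ for finite $G$, and specifically the compatibility of the translation action with the module structure. It sends a function $f$ to $\sum_g f(g)g$, possibly after inverting $g$ according to the convention chosen for the action. Any such identification with a free module of rank one suffices. In fact only projectivity of $B(G,k)$ over $kG$ is really needed, and this follows from the stated fact that $B(G,k)$ is $kH$-free for every finite $H$, taking $H=G$. Everything else is formal from II.
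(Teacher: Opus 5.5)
Your proposal is correct and follows the paper's proof: for finite $G$ one has $B(G,k)=kG$, which is projective, and the two consequences of the adjunction isomorphisms in \S 1.II then show that $M\otimes_k B(G,k)$ is projective when $M$ is $k$-projective, and that $\mbox{Hom}_k(B(G,k),M)$ is injective when $M$ is $k$-injective. Your explicit untwisting isomorphism and the direct Hom computation are valid alternatives for those two steps. The ``only if'' directions come from restricting to the trivial subgroup, as you note.
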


\vspace{-0.05in}

\noindent
{\em Proof.}
(i) Assume that $M$ is $k$-projective. Then, the diagonal $kG$-module 
$M \otimes_k B(G,k)$ is projective, since $B(G,k)=kG$ is projective; 
hence, $M$ is cofibrant.

(ii) Assume that $M$ is $k$-injective. Then, the diagonal $kG$-module 
$\mbox{Hom}_k(B(G,k),M)$ is injective, since $B(G,k)=kG$ is projective; 
hence, $M$ is fibrant. \hfill $\Box$

\begin{Lemma}
Let $k$ be a commutative ring and $G$ a finite group. Then, the following
conditions are equivalent:

(i) Any fibrant $kG$-module is injective, i.e.\ 
${\tt Inj}(kG) = {\tt Fib}(kG)$.

(ii) Any cofibrant $kG$-module is projective, i.e.\ 
${\tt Proj}(kG) = {\tt Cof}(kG)$.

(iii) The order of $G$ is invertible in $k$.
\end{Lemma}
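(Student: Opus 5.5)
By Lemma 1.4, for a finite group $G$ the cofibrant $kG$-modules are exactly the $k$-projective ones and the fibrant $kG$-modules are exactly the $k$-injective ones. So (i) says every $k$-injective $kG$-module is $kG$-injective, and (ii) says every $k$-projective $kG$-module is $kG$-projective. I will prove (iii)$\Rightarrow$(i), (iii)$\Rightarrow$(ii), and then (i)$\Rightarrow$(iii) and (ii)$\Rightarrow$(iii).

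For (iii)$\Rightarrow$(ii), let $M$ be $k$-projective and $n=|G|$ a unit in $k$. Choose a $kG$-epimorphism $p:F\to M$ with $F$ free. Since $M$ is $k$-projective, $p$ has a $k$-linear section $s$. Average it:
\[ s' = n^{-1}\sum_{g\in G} g s g^{-1}. \]
This $s'$ is a $kG$-linear section of $p$, so $M$ is a direct summand of $F$ and hence $kG$-projective. For (iii)$\Rightarrow$(i), let $M$ be $k$-injective and embed it into an injective $kG$-module $I$. The embedding is $k$-split because $M$ is $k$-injective, and averaging the $k$-linear retraction $I\to M$ in the same way gives a $kG$-linear retraction. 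Hence $M$ is $kG$-injective.

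For (ii)$\Rightarrow$(iii), apply (ii) to the trivial module $k$, which is $k$-free and therefore cofibrant. So $k$ is $kG$-projective and the augmentation $\varepsilon:kG\to k$ splits $kG$-linearly. A $kG$-map $k\to kG$ sends $1$ to a $G$-invariant element, which must be of the form $c\,N$ with $N=\sum_{g\in G}g$ and $c\in k$. Being a section means $\varepsilon(cN)=cn=1$, so $n$ is a unit in $k$.

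For (i)$\Rightarrow$(iii), which I expect to be the main obstacle, I need a $k$-injective module attached to the trivial module. I would take an injective cogenerator $E$ of $k$-modules, for instance $E=\mbox{Hom}_{\mathbb{Z}}(k,\mathbb{Q}/\mathbb{Z})$, with trivial $G$-action. It is $k$-injective, hence fibrant, hence $kG$-injective by (i).
\begin{itemize}
\item The coinduced module $\mbox{Hom}_k(kG,E)$ surjects onto $E$ via evaluation at $1$.
\item Since $E$ is $kG$-injective, the $kG$-embedding $E\to\mbox{Hom}_k(kG,E)$ adjoint to the identity (the constant functions) has a $kG$-linear retraction $r$.
\item The constant functions are identified with the image of the norm map on $\mbox{Hom}_k(kG,E)\cong E\otimes_k kG$. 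Using $r$, multiplication by $n$ becomes an automorphism of $E$: one checks that $r$ applied to the function $\delta_1\cdot x$ gives an element $y$ with $ny=x$, and injectivity comes from a dual argument.
\end{itemize}
Once multiplication by $n$ is bijective on the faithful module $E$, it is bijective on $\mbox{Hom}_k(k,E)\cong E$. Because $E$ is a cogenerator, it follows that $n\cdot k=k$, so $n$ is a unit in $k$.

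If that computation gets messy, I would use duality instead. Under (i), the character module $\mbox{Hom}_{\mathbb{Z}}(kG,\mathbb{Q}/\mathbb{Z})$ of the trivial-action setup makes $\mbox{Hom}_{\mathbb{Z}}(k,\mathbb{Q}/\mathbb{Z})$, with trivial action, an injective $kG$-module. By standard character-module duality, $k$ is then flat as a $kG$-module. A finitely presented flat module is projective, so $k$ is $kG$-projective, and the argument for (ii)$\Rightarrow$(iii) finishes the proof.
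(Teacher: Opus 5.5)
Your proposal is correct. Your duality fallback for (i)$\Rightarrow$(iii) takes a genuinely different and more economical route than the paper, which proves the cycle (i)$\Rightarrow$(ii)$\Rightarrow$(iii)$\Rightarrow$(i).

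For (i)$\Rightarrow$(ii), the paper takes an \emph{arbitrary} cofibrant $M$. It shows that $\mbox{Hom}_k(M,D)$ is fibrant, hence injective by (i), so $M$ is flat by Lambek's theorem. To upgrade flat to projective it then needs two further inputs: cofibrant modules are cokernels of acyclic complexes of projectives whose cokernels are cofibrant, hence flat; and Neeman's theorem.

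You apply the same Lambek step only to the trivial module $k$. Since $G$ is finite, $k$ is finitely presented over $kG$, because the augmentation ideal is generated by the finitely many elements $g-1$. So flatness gives projectivity at once, and your augmentation-splitting argument finishes. This avoids Neeman's theorem and the structure of cofibrant modules entirely. The price is that you must prove (iii)$\Rightarrow$(ii) separately by averaging, which you do correctly. In exchange, the paper's route shows that (i) forces every ``cofibrant-flat'' module to be flat, which is the observation recorded in Remark 1.6.

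In your first, norm-map argument, the half you actually verify, $nE=E$, is not the one needed. Surjectivity alone cannot force $n$ to be a unit: $\mathbb{Q}/\mathbb{Z}$ is divisible over $\mathbb{Z}$. Injectivity of multiplication by $n$ alone suffices, since $\mbox{Hom}_k(k/nk,E)$ is the $n$-torsion of $E$ and $E$ is a cogenerator.

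The injectivity you defer to a ``dual argument'' is quick. Suppose $ny=0$. Then the constant function with value $y$ is killed by the norm $\sum_g g$ acting on $P=\mbox{Hom}_k(kG,E)$. The kernel of that norm consists of the functions $f$ with $\sum_x f(x)=0$, and this is exactly $IP$ for the augmentation ideal $I$ of $kG$. Since $E$ is trivial, $r(IP)\subseteq IE=0$. As $r$ sends the constant function with value $y$ to $y$, we get $y=0$.
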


\vspace{-0.05in}

\noindent
{\em Proof.} 
(i)$\rightarrow$(ii): Let $D$ be an injective cogenerator of the 
category of $k$-modules and consider a cofibrant $kG$-module $M$.
Viewing $D$ as a $kG$-module with trivial action of the group $G$, 
we also consider the $kG$-module $\mbox{Hom}_k(M,D)$ and note that 
\[ \mbox{Hom}_k(B(G,k),\mbox{Hom}_k(M,D)) \simeq 
   \mbox{Hom}_k(B(G,k) \otimes_kM,D) . \]
Since the $kG$-module $B(G,k) \otimes_kM$ is projective (and hence 
flat), it follows from Lambek's theorem \cite[Theorem 4.9]{L} that 
$\mbox{Hom}_k(M,D) \in {\tt Fib}(kG)$. Then, the $kG$-module 
$\mbox{Hom}_k(M,D)$ is injective and hence, using Lambek's theorem 
once more, we conclude that the $kG$-module $M$ is flat. We have thus 
proved that all cofibrant $kG$-modules are flat. Since any cofibrant
$kG$-module is known to be a cokernel of an acyclic complex of projective 
$kG$-modules, all of whose cokernels are cofibrant (cf.\ \cite{CK}), 
an application of Neeman's result \cite[Proposition 7.6]{N} implies 
that ${\tt Cof}(kG) \subseteq {\tt Proj}(kG)$. 

(ii)$\rightarrow$(iii): In view of Lemma 1.4(i), the trivial $kG$-module
$k$ is cofibrant and hence projective. Then, the augmentation 
$\varepsilon : kG \longrightarrow k$ splits, i.e.\ there is a $kG$-linear
map $s : k \longrightarrow kG$ with $\varepsilon \circ s = 1_k$. If 
$s(1) = a \sum_{g \in G}g$ with $a \in k$, then 
$a \cdot \! \mid \! G \! \mid \; = 1 \in k$.

(iii)$\rightarrow$(i): Let $M \in {\tt Fib}(kG)$ and consider a
$kG$-linear monomorphism $f : M \longrightarrow N$. Since $M$ is 
$k$-injective (cf.\ Lemma 1.4(ii)), $f$ admits a $k$-linear left 
inverse $t : N \longrightarrow M$. We modify $t$ by the standard 
averaging process to make it $kG$-linear: If $a \in k$ is the 
inverse of the order of $G$, then the map 
$\tau = a \sum_{g \in G} g \cdot t \in \mbox{Hom}_k(N,M)$ is a 
$kG$-linear left inverse of $f$. It follows that the $kG$-module 
$M$ is injective. \hfill $\Box$

\medskip

\noindent
{\bf Remark 1.6.}
A careful examination of the proof of the implication (i)$\rightarrow$(ii)
in Lemma 1.5 above shows that all three conditions therein are also
equivalent to the following assertion: Any $kG$-module $M$ for which 
the diagonal $kG$-module $M \otimes_k B(G.k)$ is flat (such modules 
were called cofibrant-flat in \cite[$\S $2]{ER2}) is necessarily flat.
\addtocounter{Lemma}{1}

\medskip

\noindent
{\sc IV.\ Cotorsion pairs.}
Let $R$ be a ring and {\tt S} a class of modules. A continuous 
ascending filtration of a module $M$ by modules in {\tt S} is a family 
of submodules $(M_{\alpha})_{\alpha < \gamma}$ of $M$, where $\gamma$ 
is an ordinal number, such that:

(i) $M_0=0$ and $M = \bigcup_{\alpha < \gamma} M_{\alpha}$,

(ii) the family is increasing, i.e.\ $M_{\alpha} \subseteq M_{\beta}$ 
whenever $\alpha , \beta$ are two ordinals with $\alpha < \beta < \gamma$,

(iii) $M_{\beta} = \bigcup_{\alpha < \beta} M_{\alpha}$ for any
limit ordinal $\beta < \gamma$ and

(iv) $M_{\alpha +1}/M_{\alpha} \in {\tt S}$ for any ordinal
$\alpha$ with $\alpha + 1 < \gamma$.
\newline
Let Filt-{\tt S} be the class of modules that admit a continuous 
ascending filtration by modules in {\tt S}. We say that modules 
in Filt-{\tt S} are transfinite extensions of modules in {\tt S}.

The left orthogonal class $^{\perp}{\tt S}$ of {\tt S} consists of those
modules $M$, for which ${\rm Ext}^1_R(M,S)=0$ for all $S \in {\tt S}$. The 
right orthogonal ${\tt S}^{\perp}$ of {\tt S} is the class consisting of 
those modules $N$, for which ${\rm Ext}^1_R(S,N)=0$ for all $S \in {\tt S}$. 
We note that any left orthogonal class is closed under direct summands and 
transfinite extensions; in view of Eklof's lemma \cite[Theorem 7.3.4]{EJ2}, 
we always have Filt-$\left( ^{\perp}{\tt S} \right) \! = \! ^{\perp}{\tt S}$.

Let ${\tt U},{\tt V}$ be two module classes. Then, $({\tt U},{\tt V})$ 
is a cotorsion pair (cf.\ \cite[Definition 7.1.2]{EJ2}) if 
${\tt U} = {^{\perp}{\tt V}}$ and ${\tt U} ^{\perp} = {\tt V}$. The 
cotorsion pair $({\tt U},{\tt V})$ is cogenerated by a set ${\tt S}$ of 
modules if ${\tt V} = { \tt S} ^{\perp}$, so that 
${\tt U} = \, \! ^{\perp} \! \left( {\tt S} ^{\perp} \right)$. Of course, 
we then have ${\tt S} \subseteq {\tt U}$. Since all projective modules are 
contained in ${\tt U}$, we may always assume that the regular module $R$ 
is contained in ${\tt S}$. In that case, the left-hand class {\tt U} of the 
pair consists precisely of the direct summands of Filt-{\tt S} modules; cf.\ 
\cite[Corollary 7.3.5]{EJ2}. Any cotorsion pair $({\tt U},{\tt V})$ that is 
cogenerated by a set as above is complete, in the following sense: For any 
module $M$ there are short exact sequences of modules
\[ 0 \longrightarrow V \longrightarrow U \longrightarrow M \longrightarrow 0 
   \;\;\; \mbox{and} \;\;\;
   0 \longrightarrow M \longrightarrow V' \longrightarrow U' \longrightarrow 0 
   , \]
where $U,U' \in {\tt U}$ and $V,V' \in {\tt V}$; this result is proved in 
\cite{ETr}. 

(Co-)fibrant modules over a group algebra provide examples of cotorsion 
pairs as above. More precisely, let $k$ be a commutative ring and $G$ 
a group. As shown in \cite[Theorem 2.4]{ER1}, the pair 
$({\tt Cof}(kG),{\tt Cof}(kG)^{\perp})$ is a cotorsion pair that is  
cogenerated by a set. In other words, there exists a set ${\tt S}_G$ of 
cofibrant $kG$-modules, such that 
${\tt Cof}(kG)^{\perp} = { \tt S}_G ^{\perp}$ and
${\tt Cof}(kG) = \, \! ^{\perp} \! \left( {\tt S}_G ^{\perp} \right)$. 
Assuming that $kG \in {\tt S}_G$, we conclude that any cofibrant 
$kG$-module is a direct summand of a Filt-${\tt S_G}$ module.
It is shown in \cite[Theorem 4.5]{ER2} that the pair 
$(^{\perp}{\tt Fib}(kG),{\tt Fib}(kG))$ is also a cotorsion pair that 
is cogenerated by a set. In particular, both cotorsion pairs are complete.

\section{Acyclic complexes of projective/flat modules over $\overline{\mathfrak{F}}$-groups}

\noindent
In this Section, we examine the structure of cokernels of acyclic 
complexes of projective $kG$-modules, for certain pairs $(k,G)$,
in terms of modules that are induced from finite subgroups of $G$. 
As a consequence of our analysis, we prove the Theorem stated in 
the Introduction.

Having fixed the commutative ring $k$, we are interested in groups
$G$, over which all acyclic complexes of projective $kG$-modules 
are contractible, i.e.\ over which $\mathcal{P}(kG) = {\tt Proj}(kG)$. 
Let $\mathfrak{X}(k)$ be the class of these groups. We begin with 
a simple observation:

\begin{Lemma}
For any commutative ring $k$ the class $\mathfrak{X}(k)$ is 
subgroup-closed.
\end{Lemma}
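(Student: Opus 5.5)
The plan is to induce up from the subgroup and then restrict back down. Let $G \in \mathfrak{X}(k)$ and $H \subseteq G$ be a subgroup. Let ${\bf P}$ be an acyclic complex of projective $kH$-modules and $M$ one of its cokernels. We want to show that $M$ is a projective $kH$-module. To do this, we pass to the induced complex $\mbox{ind}_H^G{\bf P} = kG \otimes_{kH}{\bf P}$.

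First, $kG$ is free as a right $kH$-module, with basis a set of left coset representatives. Hence the functor $\mbox{ind}_H^G$ is exact, and it carries projective $kH$-modules to projective $kG$-modules. Therefore $\mbox{ind}_H^G{\bf P}$ is an acyclic complex of projective $kG$-modules, and $\mbox{ind}_H^GM$ is one of its cokernels, since induction preserves cokernels. Because $G \in \mathfrak{X}(k)$, this complex is contractible, so $\mbox{ind}_H^GM$ is a projective $kG$-module.

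Next, restriction $\mbox{res}_H^G$ takes projective $kG$-modules to projective $kH$-modules, since $kG$ is a free $kH$-module. So $\mbox{res}_H^G\mbox{ind}_H^GM$ is a projective $kH$-module. By the Mackey decomposition (equivalently, the decomposition $kG = kH \oplus \bigoplus_{g \notin H} gkH$ as $kH$-bimodule pieces indexed by cosets), $M$ is a $kH$-direct summand of $\mbox{res}_H^G\mbox{ind}_H^GM$. This is the same fact used in the proof of Lemma 1.1. Hence $M$ is projective.

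Since every cokernel of ${\bf P}$ is then projective, each short exact sequence $0 \to Z_n \to P_n \to Z_{n-1} \to 0$ of cycles splits. It follows that ${\bf P}$ is contractible, which gives $H \in \mathfrak{X}(k)$. No step is a real obstacle. The only point needing care is the last one: the equality $\mathcal{P}(kH) = {\tt Proj}(kH)$ (every cokernel projective) must be translated into contractibility, and this is exactly the splitting of the cycle sequences.
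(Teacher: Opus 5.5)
Your proof is correct and follows the paper's argument exactly: induce $\mathbf{P}$ to $G$, use $G \in \mathfrak{X}(k)$ to get $\mbox{ind}_H^GM$ projective, then restrict and use that $M$ is a $kH$-direct summand of $\mbox{res}_H^G\mbox{ind}_H^GM$. One small correction to your parenthetical: for $g \notin H$ the pieces $g\,kH$ are not $kH$-sub-bimodules, so the bimodule splitting you want is $kG = kH \oplus k[G \setminus H]$ (or the decomposition by double cosets $HgH$), which gives the same direct-summand conclusion.
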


\vspace{-0.05in}

\noindent
{\em Proof.} 
Let $G$ be an $\mathfrak{X}(k)$-group and consider a subgroup 
$H \subseteq G$. In order to show that $H$ is also an 
$\mathfrak{X}(k)$-group, assume that $M \in \mathcal{P}(kH)$. Then, 
$M$ is a cokernel of an acyclic complex of projective $kH$-modules 
${\bf P}$ and hence $\mbox{ind}_H^GM$ is a cokernel of the acyclic 
complex of projective $kG$-modules $\mbox{ind}_H^G{\bf P}$. It follows 
that $\mbox{ind}_H^GM \in \mathcal{P}(kG) = {\tt Proj}(kG)$. Then, the 
restricted $kH$-module $\mbox{res}_H^G \mbox{ind}_H^GM$ is projective.
Since $M$ is a direct summand of the latter, we conclude that 
$M \in {\tt Proj}(kH)$, as needed. \hfill $\Box$

\begin{Corollary}
Let $k$ be a commutative ring and assume that $G$ a group, such that 
any acyclic complex of projective $kG$-modules is contractible. Then, 
any acyclic complex of projective $k$-modules is contractible and the 
order of any finite subgroup of $G$ is a unit in $k$.
\end{Corollary}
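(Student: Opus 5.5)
Both conclusions follow from Lemma 2.1, which says that $\mathfrak{X}(k)$ is subgroup-closed. Since $G \in \mathfrak{X}(k)$, every subgroup of $G$ is also in $\mathfrak{X}(k)$; I will use the trivial subgroup and the finite subgroups.

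For the first assertion, take $H = 1$. Lemma 2.1 gives $1 \in \mathfrak{X}(k)$. Since $k1 = k$, this says exactly that $\mathcal{P}(k) = {\tt Proj}(k)$, so every acyclic complex of projective $k$-modules is contractible.

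For the second assertion, let $H \subseteq G$ be a finite subgroup. By Lemma 2.1, $H \in \mathfrak{X}(k)$, so $\mathcal{P}(kH) = {\tt Proj}(kH)$. I will show ${\tt Cof}(kH) \subseteq {\tt Proj}(kH)$ and then apply Lemma 1.5.

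\begin{itemize}
\item Section 1.III records that ${\tt Cof}(kH) \subseteq \mathcal{P}(kH)$: iterating $M \mapsto M \otimes_k B(H,k)$ gives a projective coresolution of a cofibrant module, and splicing it with a projective resolution gives an acyclic complex of projectives with $M$ as a cokernel.
\item Combining this with $\mathcal{P}(kH) = {\tt Proj}(kH)$ gives ${\tt Cof}(kH) = {\tt Proj}(kH)$.
\item This is condition (ii) of Lemma 1.5, so the implication (ii)$\rightarrow$(iii) there shows that $|H|$ is invertible in $k$.
\end{itemize}

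One can also argue directly. By Lemma 1.4(i), the trivial module $k$ is cofibrant, hence projective over $kH$. So the augmentation $kH \to k$ splits, and the norm-element argument then shows that $|H|$ is a unit in $k$.

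No step here presents a real obstacle. The only input not already packaged as a lemma is the inclusion ${\tt Cof}(kH) \subseteq \mathcal{P}(kH)$, and the paper has already recorded it (citing \cite{CK}).
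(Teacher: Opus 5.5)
Your proposal is correct and follows the paper's proof essentially verbatim. You apply Lemma 2.1 with $H=1$ for the first claim, and for finite $H$ you combine ${\tt Cof}(kH)\subseteq\mathcal{P}(kH)={\tt Proj}(kH)$ with Lemma 1.5; your ``direct'' variant via the splitting of the augmentation is simply the paper's proof of (ii)$\rightarrow$(iii) in Lemma 1.5 written out inline.
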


\vspace{-0.05in}

\noindent
{\em Proof.} 
Since $G \in \mathfrak{X}(k)$, Lemma 2.1 implies that $H \in \mathfrak{X}(k)$
for any subgroup $H \subseteq G$. Then, any acyclic complex of projective 
$kH$-modules is contractible for any subgroup $H \subseteq G$. Letting $H=1$ 
be the trivial subgroup, it follows that any acyclic complex of projective 
$k$-modules is contractible. We now let $H \subseteq G$ be a finite subgroup. 
Since ${\tt Cof}(kH) \subseteq \mathcal{P}(kH)$, we have 
${\tt Cof}(kH) \subseteq {\tt Proj}(kH)$, i.e\ ${\tt Cof}(kH) = {\tt Proj}(kH)$. 
Then, Lemma 1.5 implies that the order of $H$ is a unit in $k$, as needed. 
\hfill $\Box$

\medskip

\noindent
One method of constructing $\mathcal{P}(kG)$-modules is to use induction
from finite subgroups of $G$: If $H \subseteq G$ is a finite subgroup and 
$M$ is a $k$-projective $kH$-module (i.e.\ a cofibrant $kH$-module), then 
$M$ is a cokernel of a suitable acyclic complex of projective $kH$-modules 
${\bf P}$. Then, $\mbox{ind}_H^GM$ is a cokernel of the acyclic complex of 
projective $kG$-modules $\mbox{ind}_H^G{\bf P}$, so that 
$\mbox{ind}_H^GM \in \mathcal{P}(kG)$. Our goal is to show that, for an 
$\overline{\mathfrak{F}}(k)$-group $G$, any module in $\mathcal{P}(kG)$ is 
built up from modules of the form $\mbox{ind}_H^GM$, for various pairs 
$(H,M)$ as above, using transfinite extensions and direct summands. This 
is reminiscent of the description of modules of type FP$_{\infty}$ over 
an ${\scriptstyle{{\bf LH}}}\mathfrak{F}$-group, as the direct summands 
of modules that admit a finite filtration with quotients induced from 
modules of type FP$_{\infty}$ over finite subgroups of the group; cf.\ 
\cite[Corollary 12.4]{Ben}.

We shall use the following result, that is proved in \cite{ER1}.

\begin{Theorem}
Let $k$ be a commutative ring and $G$ an $\overline{\mathfrak{F}}(k)$-group.

(i) (cf.\ \cite[Corollary 6.4]{ER1}) A $kG$-module $M$ is contained
in ${\tt Cof}(kG)^{\perp}$ if and only if 
$\mbox{res}_H^GM \in {\tt Cof}(kH)^{\perp}$ for any finite subgroup 
$H \subseteq G$.

(ii) (cf.\ \cite[Corollary 4.8]{ER1})
If any acyclic complex of projective $k$-modules is contractible,
then ${\tt Cof}(kG) = \mathcal{P}(kG)$. \hfill $\Box$
\end{Theorem}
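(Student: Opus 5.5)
The plan is to prove both parts by transfinite induction along the hierarchy $\mathfrak{F}(k)_\alpha$. At each stage we treat separately the operations ${\scriptstyle{{\bf H}}}$ (cellular actions), ${\bf L}$ (directed unions of finitely generated subgroups), $\Phi(k)$ and $\Phi(k)_{flat}$. Two general facts are used throughout.

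First, for any subgroup $H\subseteq G$, induction and restriction preserve cofibrancy, since
\[ \mbox{ind}_H^GN \otimes_k B(G,k)\simeq \mbox{ind}_H^G\left(N\otimes_k \mbox{res}^G_H B(G,k)\right) \]
and $B(H,k)$ is a $kH$-direct summand of $\mbox{res}^G_H B(G,k)$. Second, ${\tt Cof}(kG)$ is a resolving class, since $\mbox{Proj}$ is resolving and $-\otimes_kB(G,k)$ is exact. Hence $M\in{\tt Cof}(kG)^\perp$ is equivalent to $\mbox{Ext}^i_{kG}(C,M)=0$ for all $i\geq 1$ and all cofibrant $C$.

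For (i), the forward implication is formal. Given a finite subgroup $H$ and $N\in{\tt Cof}(kH)$, the module $\mbox{ind}_H^GN$ is cofibrant. Eckmann--Shapiro then gives $\mbox{Ext}^1_{kH}(N,\mbox{res}^G_HM)=\mbox{Ext}^1_{kG}(\mbox{ind}_H^GN,M)=0$.

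For the converse, assume $\mbox{res}^G_HM\in{\tt Cof}(kH)^\perp$ for all finite $H$. By induction on the level of $G$, using subgroup-closedness of $\overline{\mathfrak{F}}(k)$, I would assume that $\mbox{res}^G_KM\in{\tt Cof}(kK)^\perp$ for every subgroup $K$ of lower level. The stages are then handled as follows.
\begin{itemize}
\item ${\scriptstyle{{\bf H}}}_\alpha$ step. Take the augmented cellular chain complex of the finite dimensional contractible $G$-CW-complex $X$ and tensor it (diagonally) with a cofibrant $C$. This gives a finite exact sequence $0\to C_n\otimes_kC\to\cdots\to C_0\otimes_kC\to C\to0$. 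Each term is a sum of modules $\mbox{ind}_{G_\sigma}^G\mbox{res}^G_{G_\sigma}C$, and these have vanishing higher $\mbox{Ext}$ into $M$ by Eckmann--Shapiro and the inductive hypothesis. Dimension shifting then yields $\mbox{Ext}^1_{kG}(C,M)=0$.
\item ${\bf L}$ step. Write $G$ as the directed union of its ${\scriptstyle{{\bf H}}}$-subgroups and filter a cofibrant $C$ (or a member of the cogenerating set ${\tt S}_G$) by induced pieces. Then use Eklof's lemma together with a $\varprojlim^1$-type argument over the directed system.
\item $\Phi(k)$ and $\Phi(k)_{flat}$ steps. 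Take a special ${\tt Cof}$-precover $0\to V\to U\to C\to 0$ and show that the relevant module has finite projective (respectively flat) dimension after restriction to lower-level subgroups. The $\Phi$-property then gives finite dimension over $kG$, and finite dimension together with cofibrancy forces projectivity.
\end{itemize}

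For (ii), the inclusion ${\tt Cof}(kG)\subseteq\mathcal{P}(kG)$ is known, so the task is to show that $M\in\mathcal{P}(kG)$ forces $M\otimes_kB(G,k)$ to be projective. Restricting an acyclic complex ${\bf P}$ with cokernel $M$ to the trivial group and using the hypothesis on $k$, $M$ is $k$-projective. Hence, by Lemma 1.4, $\mbox{res}^G_HM$ is cofibrant for every finite $H$. By induction on the level, $\mbox{res}^G_KM\in{\tt Cof}(kK)$ for every lower-level subgroup $K$, so $\mbox{res}^G_K(M\otimes_kB(G,k))$ is projective. At a $\Phi(k)$ (or $\Phi(k)_{flat}$) stage this gives $\mbox{pd}_{kG}(M\otimes_kB(G,k))<\infty$ (or finite flat dimension). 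Moreover $M\otimes_kB(G,k)$ is itself the cokernel of the acyclic complex of projectives ${\bf P}\otimes_kB(G,k)$. A cokernel of an acyclic complex of projectives with finite projective dimension is projective, because its syzygy sequence splits from some point on. In the flat case, Neeman's result gives projectivity once all cokernels are flat. The ${\scriptstyle{{\bf H}}}$ stage is handled again by tensoring with the cellular chains of $X$. This exhibits $M\otimes_kB(G,k)$ as having finite projective dimension relative to a finite resolution by projective modules $\mbox{ind}_{G_\sigma}^G\mbox{res}(\cdots)$.

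I expect the main obstacle to be the ${\bf L}$ step in both parts. Here there is no global dimension bound, and one must pass from all finitely generated subgroups to $G$. I would first try to apply (i) to ${\bf P}$: once $M\in\mathcal{P}(kG)$ is shown to satisfy $\mbox{Ext}^1_{kG}(M,V)=0$ for all $V\in{\tt Cof}(kG)^\perp$, it follows that $M$ is cofibrant. The required vanishing I would check through restrictions to finite subgroups, using (i) and the fact that $\mbox{Ext}$ from the cokernels of ${\bf P}$ into $V$ can be dimension-shifted arbitrarily far to the left.
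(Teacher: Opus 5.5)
There is a genuine gap, and it sits at the centre of your argument: your ``first general fact'' is only proved for restriction. The fact that $B(H,k)$ is a $kH$-direct summand of $\mbox{res}^G_HB(G,k)$ shows that if $M\otimes_kB(G,k)$ is projective then so is $\mbox{res}^G_HM\otimes_kB(H,k)$. For induction you need something else: that $N\otimes_k\mbox{res}^G_HB(G,k)$ is $kH$-projective whenever $N\otimes_kB(H,k)$ is. The summand relation gives nothing here, since $\mbox{res}^G_HB(G,k)$ is the module of finite-image functions on $H\times(H\backslash G)$ (with $H$ acting on the first factor), in general much bigger than $B(H,k)$.

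For finite $H$ the conclusion still holds, because $\mbox{res}^G_HB(G,k)$ is $kH$-free and cofibrant $kH$-modules are $k$-projective. So your forward direction of (i) is fine once justified this way. In (ii), however, you use the fact for infinite lower-level $K$. The step ``$\mbox{res}^G_KM\in{\tt Cof}(kK)$, so $\mbox{res}^G_K(M\otimes_kB(G,k))$ is projective'' is exactly the assertion that $\mbox{ind}_K^G\mbox{res}^G_KM$ is cofibrant, and it does not follow from cofibrancy over $kK$.

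The same input is hidden in your $\Phi$-step for (i). Whichever approximation sequence you take, you need restriction to a lower-level $K$ to carry ${\tt Cof}(kG)^\perp$ into ${\tt Cof}(kK)^\perp$ for the approximating module $V$. Compare the use of Corollary A.7(ii) in Proposition B.1(iii). Your inductive hypothesis concerns only the fixed module $M$, so it says nothing about $V$. This is why Appendix A, the fibrant counterpart of \cite[$\S$5]{ER1}, introduces the class $\mathfrak{J}$. Its defining property is quantified over all supergroups $G\supseteq H$ and all modules in $\mathcal{I}_0(kH)$, and it is this stronger statement that survives the ${\scriptstyle{{\bf H}}}$, ${\scriptstyle{{\bf LH}}}$ and $\Phi$ steps (Proposition A.4). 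Taking $G=H$ gives the fibrant version of (ii), and Proposition A.2 gives the (co)induction statement.

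You could avoid $\mathfrak{J}$ by proving (i) and (ii) simultaneously, for all modules over all lower-level groups. Let $K$ be such a group and $M_j$ the cokernels of ${\bf P}$. Then $\mbox{res}_K(M_j\otimes_kB(G,k))$ is cofibrant by (ii) for $K$. It also lies in ${\tt Cof}(kK)^\perp$ by (i) for $K$: its restriction to a finite $F\subseteq K$ is $\mbox{res}_FM_j\otimes_k(\mbox{free }kF\mbox{-module})$, which is projective and hence in ${\tt Cof}(kF)^\perp$. So it lies in ${\tt Cof}(kK)\cap{\tt Cof}(kK)^\perp={\tt Proj}(kK)$. Similarly, $\mbox{res}_KV\in{\tt Cof}(kK)^\perp$ follows from (i) for $K$ together with the finite-subgroup case.

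Even with this repair, three further steps need fixing.

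(a) ``A cokernel of an acyclic complex of projectives of finite projective dimension is projective because the syzygy sequence splits from some point on'' is not an argument. $\mbox{pd}\,U_0=n$ only makes the cokernels $n$ or more steps to the \emph{left} of $U_0$ projective. You need a uniform bound for all cokernels $M_j\otimes_kB(G,k)$; get it by applying the $\Phi$-property to $\bigoplus_jM_j$, the ``standard argument'' of Proposition A.4(iii). Then each cokernel is an $n$-th syzygy of one of dimension $\leq n$, and in the flat case Neeman's theorem finishes.

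(b) At ${\scriptstyle{{\bf H}}}$-stages, dimension shifting along the cellular sequence only gives $\mbox{Ext}^i_{kG}(C,M)=0$ for $i>d$. To reach $i=1$, write $C$ as a $d$-th syzygy of another cofibrant module, as in Proposition B.1(i).

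(c) The ${\scriptstyle{{\bf LH}}}$-step needs an actual mechanism. Use induction on $|G|$, noting that countable ${\scriptstyle{{\bf LH}}}\mathfrak{C}$-groups are ${\scriptstyle{{\bf H}}}\mathfrak{C}$-groups. Take a continuous chain $(G_\lambda)$ of smaller subgroups, apply Eklof's lemma to the kernel of $\mbox{ind}^G_{G_0}\mbox{res}^G_{G_0}C\to C$ as in Proposition A.4(ii), and then use (b).

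Finally, your closing idea of checking $\mbox{Ext}^1_{kG}(M,V)$ on finite subgroups has no mechanism behind it. $\mbox{Ext}$ over $kG$ is not determined by restrictions to finite subgroups.
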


\noindent
It follows form Theorem 2.3(i) that the class ${\tt Cof}(kG)$ of cofibrant
modules coincides, whenever $G$ is an $\overline{\mathfrak{F}}(k)$-group, 
with the class of {\em proper} cofibrant modules introduced by Dalezios 
and G\'{o}mez in \cite{DG}. Recall from $\S $1.IV that for any subgroup 
$H \subseteq G$ there is a set ${\tt S}_H$ of cofibrant $kH$-modules, such 
that ${\tt Cof}(kH)^{\perp} = {\tt S}_H^{\perp}$. We can always assume that 
${\tt S}_H$ contains the regular module $kH$. If $M$ is any $kG$-module, then 
the Eckmann-Shapiro isomorphism
\[ \mbox{Ext}^1_{kH}(\_\!\_, \mbox{res}_H^GM) \simeq 
   \mbox{Ext}^1_{kG}(\mbox{ind}_H^G \_\!\_,M) \]
shows that $\mbox{res}_H^GM \in {\tt Cof}(kH)^{\perp} = {\tt S}_H^{\perp}$ 
if and only if $M \in \! \left( \mbox{ind}_H^G {\tt S}_H \right) \! ^{\perp}$. 
We now consider the set 
\begin{equation}
{\tt T}_G = \bigcup \{ \mbox{ind}_H^G {\tt S}_H : H \subseteq G, 
 \mbox{$H$ is finite} \} 
\end{equation}
and record the following consequence of Theorem 2.3(i); see also 
\cite[Proposition 3.9]{DG}.

\begin{Corollary}
Let $k$ be a commutative ring and $G$ an $\overline{\mathfrak{F}}(k)$-group.

(i)  ${\tt Cof}(kG)^{\perp} = {\tt T}_G^{\perp}$, i.e.\ the cotorsion pair
$({\tt Cof}(kG),{\tt Cof}(kG)^{\perp})$ is cogenerated by ${\tt T}_G$.

(ii) The class ${\tt Cof}(kG)$ of cofibrant $kG$-modules coincides with the 
class of direct summands of Filt-${\tt T}_G$ modules.
\end{Corollary}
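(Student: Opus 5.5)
For part (i), I will compute the right orthogonal ${\tt T}_G^{\perp}$ by splitting it over the finite subgroups of $G$ and applying the Eckmann--Shapiro isomorphism recorded just before the statement. Since ${\tt T}_G$ is the union of the sets $\mbox{ind}_H^G{\tt S}_H$, a $kG$-module $M$ lies in ${\tt T}_G^{\perp}$ if and only if $M \in (\mbox{ind}_H^G{\tt S}_H)^{\perp}$ for every finite subgroup $H \subseteq G$. By Eckmann--Shapiro, this holds if and only if $\mbox{res}_H^GM \in {\tt S}_H^{\perp} = {\tt Cof}(kH)^{\perp}$ for every finite $H$. Since $G$ is an $\overline{\mathfrak{F}}(k)$-group, Theorem 2.3(i) identifies this condition with $M \in {\tt Cof}(kG)^{\perp}$. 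Hence ${\tt Cof}(kG)^{\perp} = {\tt T}_G^{\perp}$.

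It remains to check that $({\tt Cof}(kG),{\tt Cof}(kG)^{\perp})$ really is cogenerated by ${\tt T}_G$. First, ${\tt T}_G$ is a set: the finite subgroups of $G$ form a set, and each ${\tt S}_H$ is a set. Second, the pair is a cotorsion pair by \cite[Theorem 2.4]{ER1}, as recalled in $\S$1.IV, so ${\tt Cof}(kG) = {^{\perp}}({\tt Cof}(kG)^{\perp}) = {^{\perp}}({\tt T}_G^{\perp})$. I will also note that ${\tt T}_G \subseteq {\tt Cof}(kG)$ is automatic from ${\tt T}_G \subseteq {^{\perp}}({\tt T}_G^{\perp})$. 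Directly, each $\mbox{ind}_H^GS$ with $S \in {\tt S}_H \subseteq {\tt Cof}(kH)$ is cofibrant, because $\mbox{ind}_H^GS \otimes_k B(G,k) \simeq \mbox{ind}_H^G(S \otimes_k \mbox{res}_H^G B(G,k))$ and $B(H,k)$ is a direct summand of $\mbox{res}_H^GB(G,k)$. Only the equality of orthogonals is actually needed.

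For part (ii), I will apply \cite[Corollary 7.3.5]{EJ2}: for a cotorsion pair cogenerated by a set containing the regular module, the left class consists exactly of the direct summands of Filt modules over that set. The set ${\tt T}_G$ contains $kG = \mbox{ind}_1^G k$, taking $H = 1$ and using the convention $k \in {\tt S}_1$, i.e.\ $kH \in {\tt S}_H$. Applying the corollary with cogenerating set ${\tt T}_G$ then gives (ii).

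The only genuinely deep input is Theorem 2.3(i), so the argument is mostly bookkeeping. The step needing the most care is to confirm that the Eckmann--Shapiro isomorphism is natural in the first variable, so that it applies termwise to each $S \in {\tt S}_H$, and that it is available for arbitrary (not necessarily finitely generated) modules. Both hold because $\mbox{ind}_H^G$ is left adjoint to $\mbox{res}_H^G$ and both functors are exact and preserve projectives.
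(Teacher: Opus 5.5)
Your proposal is correct and is essentially the paper's argument: Eckmann--Shapiro applied over each finite subgroup $H$, combined with Theorem 2.3(i), gives (i), and \cite[Corollary 7.3.5]{EJ2} applied to the set ${\tt T}_G \ni kG$ gives (ii). The one slip is in your optional aside: knowing that $B(H,k)$ is a direct summand of $\mbox{res}_H^GB(G,k)$ points the wrong way for showing that $S \otimes_k \mbox{res}_H^GB(G,k)$ is $kH$-projective. For finite $H$, argue instead that $S$ is $k$-projective and $\mbox{res}_H^GB(G,k)$ is $kH$-free, though, as you note, this step is not needed.
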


\vspace{-0.05in}

\noindent
{\em Proof.} 
Assertion (i) follows from Theorem 2.3(i), in view of the discussion 
above. Then, as explained in $\S $1.IV, assertion (ii) follows from 
\cite[Corollary 7.3.5]{EJ2} \hfill $\Box$

\begin{Proposition}
Let $k$ be a commutative ring and assume that all acyclic complexes of 
projective $k$-modules are contractible. If $G$ is an 
$\overline{\mathfrak{F}}(k)$-group, then the class $\mathcal{P}(kG)$ 
coincides with the class of direct summands of Filt-${\tt T}_G$ modules.
\end{Proposition}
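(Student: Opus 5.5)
This is essentially a concatenation of two results already stated. Under the hypothesis that every acyclic complex of projective $k$-modules is contractible, Theorem 2.3(ii) applies to the $\overline{\mathfrak{F}}(k)$-group $G$ and gives ${\tt Cof}(kG) = \mathcal{P}(kG)$. Corollary 2.4(ii), which holds for any commutative ring $k$ and any $\overline{\mathfrak{F}}(k)$-group $G$, identifies ${\tt Cof}(kG)$ with the class of direct summands of Filt-${\tt T}_G$ modules. Combining the two equalities yields the assertion.

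To make the statement transparent, I would also record the two inclusions directly, independently of Theorem 2.3(ii).

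\emph{Summands of Filt-${\tt T}_G$ modules lie in $\mathcal{P}(kG)$.}
\begin{itemize}
\item[(a)] Each member of ${\tt T}_G$ has the form $\mbox{ind}_H^GM$, with $H$ finite and $M \in {\tt S}_H \subseteq {\tt Cof}(kH)$. By Lemma 1.4(i), $M$ is $k$-projective. The induced-module argument preceding Theorem 2.3 then shows that $\mbox{ind}_H^GM \in \mathcal{P}(kG)$.
\item[(b)] In fact $\mbox{ind}_H^GM$ is cofibrant. Indeed, $\mbox{ind}_H^GM \otimes_k B(G,k) \simeq \mbox{ind}_H^G(M \otimes_k \mbox{res}_H^G B(G,k))$, and the inner module is $kH$-projective by Lemma 1.4(i) and \S 1.II.
\item[(c)] The class ${\tt Cof}(kG)$ is a left orthogonal class, hence closed under transfinite extensions (Eklof's lemma) and direct summands.
\item[(d)] Since ${\tt Cof}(kG) \subseteq \mathcal{P}(kG)$ always holds, summands of Filt-${\tt T}_G$ modules lie in $\mathcal{P}(kG)$.
\end{itemize}
This direction needs no hypothesis on $k$.

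\emph{The converse inclusion.} This is the only place where the hypothesis on $k$ enters: one must show that every $\mathcal{P}(kG)$-module is cofibrant. That step is exactly Theorem 2.3(ii), which we are allowed to quote. So I expect no genuine obstacle beyond checking that the hypotheses of Theorem 2.3 and Corollary 2.4 match those of the Proposition, which they do.
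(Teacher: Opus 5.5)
Your proof is correct and is exactly the paper's argument: Theorem 2.3(ii) gives ${\tt Cof}(kG) = \mathcal{P}(kG)$ under the hypothesis on $k$, and Corollary 2.4(ii) identifies ${\tt Cof}(kG)$ with the direct summands of Filt-${\tt T}_G$ modules. Your supplementary direct check of the easy inclusion is valid but redundant. Note that the reverse inclusion also needs Corollary 2.4(ii), and hence Theorem 2.3(i) and the assumption $G \in \overline{\mathfrak{F}}(k)$, not Theorem 2.3(ii) alone.
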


\vspace{-0.05in}

\noindent
{\em Proof.} 
This follows combining Theorem 2.3(ii) with Corollary 2.4(ii). \hfill $\Box$

\medskip

\noindent
Recall that, if $R$ is any ring, we denote by $\mathcal{F}(R)$ the class 
of those $R$-modules that appear as cokernels of acyclic complexes of flat 
modules. Flat modules are contained in $\mathcal{F}(R)$, so that we always 
have ${\tt Flat}(R) \subseteq \mathcal{F}(R)$. It is clear that the cokernels 
of an acyclic complex of flat modules ${\bf F}$ are flat if and only if
${\bf F}$ is pure-acyclic, i.e\ if and only if the complex of abelian 
groups $L \otimes _R {\bf F}$ is acyclic for all right $R$-modules $L$.
It follows that $\mathcal{F}(R) = {\tt Flat}(R)$ if and only if any acyclic
complex of flat modules is pure-acyclic.

\begin{Theorem}
If $k$ is a commutative ring and $G$ an $\overline{\mathfrak{F}}(k)$-group,
then the following conditions are equivalent:

(i) The order of any finite subgroup of $G$ is a unit in $k$ and any 
acyclic complex of projective $k$-modules is contractible.

(ii) Any acyclic complex of projective $kG$-modules is contractible,
i.e.\ $\mathcal{P}(kG) = {\tt Proj}(kG)$.

(iii) Any acyclic complex of flat $kG$-modules is pure-acyclic, i.e.\ 
$\mathcal{F}(kG) = {\tt Flat}(kG)$.
\end{Theorem}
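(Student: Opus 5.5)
I will prove (ii)$\Rightarrow$(i), (i)$\Rightarrow$(ii), and (ii)$\Leftrightarrow$(iii).

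The implication (ii)$\Rightarrow$(i) is exactly Corollary 2.2, so nothing new is needed there.

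For (i)$\Rightarrow$(ii) I will use Proposition 2.5. Under (i), $\mathcal{P}(kG)$ consists of the direct summands of Filt-${\tt T}_G$ modules. Since ${\tt Proj}(kG)$ is closed under direct summands and transfinite extensions, it suffices to show that ${\tt T}_G \subseteq {\tt Proj}(kG)$.

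An element of ${\tt T}_G$ has the form $\mbox{ind}_H^GS$, where $H$ is a finite subgroup of $G$ and $S \in {\tt S}_H \subseteq {\tt Cof}(kH)$. By (i), the order of $H$ is a unit in $k$. Lemma 1.5 then gives ${\tt Cof}(kH)={\tt Proj}(kH)$, so $S$ is a projective $kH$-module. Induction preserves projectivity, hence $\mbox{ind}_H^GS$ is a projective $kG$-module. This proves $\mathcal{P}(kG) = {\tt Proj}(kG)$.

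For the equivalence (ii)$\Leftrightarrow$(iii), I expect it to hold over any ring $R$, using the relation between $\mathcal{P}(R)$ and $\mathcal{F}(R)$ cited in the Introduction (\cite[Corollary 2.2]{ET25b}).

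For (iii)$\Rightarrow$(ii), let $M \in \mathcal{P}(kG)$. Then $M$ is a cokernel of an acyclic complex of projective, hence flat, modules. By (iii) all its cokernels are flat. By Neeman's theorem \cite[Proposition 7.6]{N} (an acyclic complex of projectives with flat cokernels is contractible), $M$ is projective.

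For (ii)$\Rightarrow$(iii) I would first try to invoke \cite[Corollary 2.2]{ET25b} directly; since its exact hypotheses may not match, I plan the following as a fallback.

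Let ${\bf F}$ be an acyclic complex of flat $kG$-modules. By \cite[Theorem 8.6]{N}, ${\bf F}$ can be written as an extension of a pure acyclic complex by a complex in the class generated by acyclic complexes of projectives. Concretely: there is a pure acyclic complex ${\bf F}'$ and an exact sequence of complexes $0\to {\bf F}'\to {\bf P}\to{\bf F}\to 0$ with ${\bf P}$ an acyclic complex of projectives. To produce it, I would use the flat cotorsion pair in the category of complexes, taking a special precover of ${\bf F}$ by a complex of projectives and checking that the kernel is a pure acyclic complex of flats; Neeman identifies $K({\tt Proj})$ with the relevant Verdier quotient.

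By (ii), ${\bf P}$ is contractible, so its cokernels are projective. Since ${\bf F}'$ is pure acyclic, its cokernels are flat. The long exact sequences of cokernels then show that the cokernels of ${\bf F}$ are flat, as extensions of flat modules by quotients of this kind; the cokernel of a monomorphism of flat modules is flat when the monomorphism is pure, and purity follows here from the pure acyclicity of ${\bf F}'$.

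The main obstacle is this flat-to-projective reduction in (ii)$\Rightarrow$(iii). My first attempt will be to cite the general ring-theoretic statement; failing that, I will carry out the Neeman/cotorsion-pair argument above.
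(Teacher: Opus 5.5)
Your proposal is correct and follows the paper's proof: (ii)$\Rightarrow$(i) is Corollary 2.2; (i)$\Rightarrow$(ii) uses Lemma 1.5 to show that ${\tt T}_G$ consists of projective $kG$-modules and then applies Proposition 2.5; and (ii)$\Leftrightarrow$(iii) is the general ring-theoretic fact \cite[Corollary 2.2]{ET25b}. Your Neeman-based arguments for (ii)$\Leftrightarrow$(iii) are a valid, if sketched, substitute for that citation, but the paper does not need them.
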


\vspace{-0.05in}

\noindent
{\em Proof.} 
The implication (ii)$\rightarrow$(i) follows from Corollary 2.2 (which
holds without any assumption on the group $G$). On the other hand, the 
equivalence (ii)$\leftrightarrow$(iii) is known to hold over any ring; 
see, for example, \cite[Corollary 2.2]{ET25b}. Hence it only remains to
show that (i)$\rightarrow$(ii).

Assume that assertion (i) holds and let $H \subseteq G$ be a finite 
subgroup. Since the order of $H$ is invertible in $k$, Lemma 1.5 
implies that ${\tt Cof}(kH) = {\tt Proj}(kH)$ and hence 
$\mbox{ind}_H^G{\tt Cof}(kH) \subseteq {\tt Proj}(kG)$. Since this is 
the case for any finite subgroup $H \subseteq G$, we conclude that the
set ${\tt T}_G$ in (1) above consists of projective $kG$-modules. On 
the other hand, any direct summand of a transfinite extension of 
projective $kG$-modules is a projective $kG$-module as well. In view 
of our assumption that any acyclic complex of projective $k$-modules 
is contractible, Proposition 2.5 implies that 
$\mathcal{P}(kG) \subseteq {\tt Proj}(kG)$,
so that $\mathcal{P}(kG) = {\tt Proj}(kG)$. \hfill $\Box$

\medskip

\noindent
Recall from the Introduction (and the references therein) that any
acyclic complex of projective $k$-modules is contractible, if the 
commutative ring $k$ satisfies one of the following conditions:

(a) $k$ is coherent and $\mbox{pd}_kC < \infty$ for any finitely 
presented $k$-module $C$,

(b) there is an integer $n \geq 0$, such that $\mbox{pd}_kC \leq n$ 
for any finitely presented $k$-module $C$,

(c) $k$ has finite weak global dimension.

\begin{Corollary}
Let $G$ be an $\overline{\mathfrak{F}}(\mathbb{Q})$-group.

(i) Any acyclic complex of projective $\mathbb{Q}G$-modules is 
contractible, i.e.\ $\mathcal{P}(\mathbb{Q}G) = {\tt Proj}(\mathbb{Q}G)$.

(ii) Any acyclic complex of flat $\mathbb{Q}G$-modules is pure-acyclic, 
i.e.\ $\mathcal{F}(\mathbb{Q}G) = {\tt Flat}(\mathbb{Q}G)$.  \hfill $\Box$
\end{Corollary}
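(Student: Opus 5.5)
The plan is to apply Theorem 2.6 with $k = \mathbb{Q}$. Since $G$ is assumed to be an $\overline{\mathfrak{F}}(\mathbb{Q})$-group, the hypothesis of that theorem on the group is already satisfied. It therefore suffices to verify condition (i) of Theorem 2.6 for the pair $(\mathbb{Q},G)$. Once that is done, the implications (i)$\rightarrow$(ii) and (i)$\rightarrow$(iii) of Theorem 2.6 give assertions (i) and (ii) of the Corollary respectively.

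Condition (i) of Theorem 2.6 has two parts, and both are immediate.

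First, the order of any finite subgroup $H \subseteq G$ is a nonzero integer. Every nonzero integer is a unit in the field $\mathbb{Q}$, so this part holds.

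Second, every acyclic complex of projective $\mathbb{Q}$-modules must be contractible. Since $\mathbb{Q}$ is a field, it has global (hence weak global) dimension $0$. So condition (c) from the list preceding the Corollary applies. One can also see this directly: every $\mathbb{Q}$-module is projective, so all cokernels of an acyclic complex of projective $\mathbb{Q}$-modules are projective. Consequently each short exact sequence into which the complex decomposes splits, and the complex is contractible.

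There is no genuine obstacle here. The only point needing care is that the group-theoretic hypothesis in Theorem 2.6 is membership in $\overline{\mathfrak{F}}(k)$ for the same ring $k$, namely $\mathbb{Q}$, and this is exactly what the Corollary assumes. We may also note in passing that $\overline{\mathfrak{F}} \subseteq \overline{\mathfrak{F}}(\mathbb{Q})$, which follows from Corollary 1.3. Hence the conclusions apply in particular to every $\overline{\mathfrak{F}}$-group, matching the statement in the Introduction.
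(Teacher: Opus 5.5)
Your proposal is correct and is exactly the argument the paper has in mind: the corollary is stated without proof as an immediate specialization of Theorem 2.6 to $k=\mathbb{Q}$. There, condition (i) holds because nonzero integers are units in $\mathbb{Q}$ and because $\mathbb{Q}$ has weak global dimension $0$ (condition (c)).
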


\noindent
Recall that, for notational simplicity, we denote the class
$\overline{\mathfrak{F}}(\mathbb{Z})$ by $\overline{\mathfrak{F}}$.

\begin{Corollary}
Let $G$ be a torsion-free $\overline{\mathfrak{F}}$-group.

(i) Any acyclic complex of projective $\mathbb{Z}G$-modules is 
contractible, i.e.\ $\mathcal{P}(\mathbb{Z}G) = {\tt Proj}(\mathbb{Z}G)$.

(ii) Any acyclic complex of flat $\mathbb{Z}G$-modules is pure-acyclic, 
i.e.\ $\mathcal{F}(\mathbb{Z}G) = {\tt Flat}(\mathbb{Z}G)$.  \hfill $\Box$
\end{Corollary}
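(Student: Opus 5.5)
The plan is to deduce both assertions directly from Theorem 2.6, applied with $k=\mathbb{Z}$. Since $\overline{\mathfrak{F}} = \overline{\mathfrak{F}}(\mathbb{Z})$, the group $G$ is an $\overline{\mathfrak{F}}(\mathbb{Z})$-group, so it suffices to verify condition (i) of Theorem 2.6 for the pair $(\mathbb{Z},G)$. Once that is done, the implication (i)$\rightarrow$(ii) of Theorem 2.6 gives assertion (i), namely $\mathcal{P}(\mathbb{Z}G) = {\tt Proj}(\mathbb{Z}G)$. The equivalence (ii)$\leftrightarrow$(iii) of that theorem then gives assertion (ii), namely $\mathcal{F}(\mathbb{Z}G) = {\tt Flat}(\mathbb{Z}G)$.

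Condition (i) of Theorem 2.6 has two parts, and each is checked directly.

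\emph{Finite subgroups.} Since $G$ is torsion-free, its only finite subgroup is the trivial one. The order of the trivial subgroup is $1$, which is a unit in $\mathbb{Z}$.

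\emph{Acyclic complexes over $\mathbb{Z}$.} We need every acyclic complex of projective $\mathbb{Z}$-modules to be contractible. This holds because $\mathbb{Z}$ has finite weak global dimension (in fact global dimension $1$), so condition (c) listed before Corollary 2.7 applies. One can also argue directly: the cokernels of such a complex are submodules of free abelian groups, hence free, and so the complex splits. Conditions (a) and (b) also hold for $\mathbb{Z}$, since $\mathbb{Z}$ is Noetherian of global dimension $1$.

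There is no real obstacle here; the only point needing care is that the hypothesis $G \in \overline{\mathfrak{F}}$ is exactly membership in $\overline{\mathfrak{F}}(\mathbb{Z})$, the class to which Theorem 2.6 applies when $k=\mathbb{Z}$.
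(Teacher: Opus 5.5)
Your proposal is correct and is essentially the paper's argument: apply Theorem 2.6 with $k=\mathbb{Z}$. Torsion-freeness makes the order condition trivial, and $\mathbb{Z}$ having finite (weak) global dimension gives the contractibility of acyclic complexes of projective $\mathbb{Z}$-modules. Parts (i) and (ii) then follow from the implications (i)$\rightarrow$(ii) and (ii)$\leftrightarrow$(iii) of that theorem.
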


\section{Acyclic complexes of injective modules over $\widehat{\mathfrak{F}}$-groups}

\noindent
In this Section, we examine the kernels of acyclic  complexes of 
injective $kG$-modules, for certain pairs $(k,G)$. The discussion 
below parallels that in Section 2. 

We are interested in groups $G$, over which all acyclic complexes 
of injective $kG$-modules are contractible, i.e.\ over which 
$\mathcal{I}(kG) = {\tt Inj}(kG)$. Let $\mathfrak{Y}(k)$ be the 
class of these groups.

\begin{Lemma}
For any commutative ring $k$ the class $\mathfrak{Y}(k)$ is 
subgroup-closed.
\end{Lemma}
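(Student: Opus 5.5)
The plan is to dualize the proof of Lemma 2.1, replacing induction by coinduction $\mbox{coind}_H^G = \mbox{Hom}_{kH}(kG,\_\!\_)$. Let $G$ be a $\mathfrak{Y}(k)$-group and $H \subseteq G$ a subgroup, and take $M \in \mathcal{I}(kH)$. Then $M$ is a kernel of an acyclic complex ${\bf I}$ of injective $kH$-modules. The goal is to show that $M$ is an injective $kH$-module.

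First, coinduction is exact, since $kG$ is a free (hence projective) $kH$-module. It is also right adjoint to the exact functor $\mbox{res}_H^G$, so it preserves injective modules. Hence $\mbox{coind}_H^G{\bf I}$ is an acyclic complex of injective $kG$-modules. Since coinduction is exact, it preserves kernels, so $\mbox{coind}_H^GM$ is a kernel of this complex. Therefore $\mbox{coind}_H^GM \in \mathcal{I}(kG) = {\tt Inj}(kG)$.

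Next, restriction $\mbox{res}_H^G$ preserves injectives, because its left adjoint $\mbox{ind}_H^G = kG \otimes_{kH} \_\!\_$ is exact ($kG$ being $kH$-free). Thus $\mbox{res}_H^G\mbox{coind}_H^GM$ is an injective $kH$-module.

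Finally, $M$ is a direct summand of $\mbox{res}_H^G\mbox{coind}_H^GM$. Writing $kG = \bigoplus_{Hg} kHg$ over right cosets, we get $\mbox{coind}_H^GM \simeq \prod_{Hg}\mbox{Hom}_{kH}(kHg,M)$ as $kH$-modules. The factor for the trivial coset is $\mbox{Hom}_{kH}(kH,M) \simeq M$. Equivalently, the counit $\mbox{res}_H^G\mbox{coind}_H^GM \to M$, evaluation at $1$, is split by the $kH$-linear map sending $m$ to the homomorphism that is $x \mapsto xm$ on $kH$ and vanishes on the other cosets. Hence $M$ is injective, which is the dual of the direct-summand argument in Lemma 2.1.

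No step here is a real obstacle. The only points needing care are the two adjunction facts (coinduction preserves injectives; restriction preserves injectives) and the splitting of $M$ off $\mbox{res}_H^G\mbox{coind}_H^GM$, all of which are standard (cf.\ \cite[Chapter III]{Br}).
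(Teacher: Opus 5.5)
Your proof is correct and is the paper's own argument: coinduce the acyclic complex of injectives to $G$, use $\mathcal{I}(kG) = {\tt Inj}(kG)$, restrict back to $H$, and split $M$ off $\mbox{res}_H^G\mbox{coind}_H^GM$. One small imprecision: the product over right cosets $Hg$ is a decomposition of $k$-modules only, since right multiplication by $h \in H$ permutes the nontrivial cosets; however, the factor for the trivial coset is $kH$-stable and your explicit splitting of evaluation at $1$ is correct, so the argument is unaffected.
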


\vspace{-0.05in}

\noindent
{\em Proof.} 
Let $G$ be a $\mathfrak{Y}(k)$-group and consider a subgroup 
$H \subseteq G$. In order to show that $H$ is also a 
$\mathfrak{Y}(k)$-group, assume that $M \in \mathcal{I}(kH)$. Then, 
$M$ is a kernel of an acyclic complex of injective $kH$-modules 
${\bf I}$ and hence $\mbox{coind}_H^GM$ is a kernel of the acyclic 
complex of injective $kG$-modules $\mbox{coind}_H^G{\bf I}$. It 
follows that $\mbox{coind}_H^GM \in \mathcal{I}(kG) = {\tt Inj}(kG)$. 
Then, the restricted $kH$-module $\mbox{res}_H^G \mbox{coind}_H^GM$ 
is injective. Since $M$ is a direct summand of the latter, we conclude 
that $M \in {\tt Inj}(kH)$, as needed. \hfill $\Box$

\begin{Corollary}
Let $k$ be a commutative ring and assume that $G$ a group, such that 
any acyclic complex of injective $kG$-modules is contractible. Then, 
any acyclic complex of injective $k$-modules is contractible and the 
order of any finite subgroup of $G$ is a unit in $k$.
\end{Corollary}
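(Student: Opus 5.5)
Following the proof of Corollary 2.2, we first apply Lemma 3.1: since $G \in \mathfrak{Y}(k)$, every subgroup $H \subseteq G$ lies in $\mathfrak{Y}(k)$, so $\mathcal{I}(kH) = {\tt Inj}(kH)$ for all $H \subseteq G$. Taking $H=1$ gives $\mathcal{I}(k) = {\tt Inj}(k)$, i.e.\ every acyclic complex of injective $k$-modules is contractible. This settles the first assertion.

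For the second assertion, let $H \subseteq G$ be a finite subgroup. By \cite[Proposition 4.2(iii)]{ER2}, recalled in $\S$1.III, we have ${\tt Fib}(kH) \subseteq \mathcal{I}(kH)$. Since $\mathcal{I}(kH) = {\tt Inj}(kH)$, this gives ${\tt Fib}(kH) \subseteq {\tt Inj}(kH)$. The reverse inclusion always holds, so ${\tt Fib}(kH) = {\tt Inj}(kH)$. This is condition (i) of Lemma 1.5, and the implications (i)$\rightarrow$(ii)$\rightarrow$(iii) of that lemma show that $\mid \! H \! \mid$ is a unit in $k$.

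Each step is a direct application of an earlier result, so there is no real obstacle. The one point to check is that Lemma 1.5 is used in the direction (i)$\rightarrow$(iii), which passes through cofibrant modules. This direction was already proved there using Lambek duality together with Neeman's theorem, so nothing new is needed. A more direct alternative would be to note that the trivial module $\mbox{Hom}_k(k,D) = D$, for $D$ an injective cogenerator of the category of $k$-modules, is $k$-injective. By Lemma 1.4(ii) it is then fibrant, hence injective as a $kH$-module. One could then try to conclude invertibility of $\mid \! H \! \mid$ via a splitting argument dual to that of Lemma 1.5. However, citing Lemma 1.5 directly is cleaner.
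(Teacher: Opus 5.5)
Your proposal is correct and matches the paper's proof step for step: Lemma 3.1 applied with $H=1$ gives the first assertion, and for a finite subgroup $H$ the inclusion ${\tt Fib}(kH) \subseteq \mathcal{I}(kH) = {\tt Inj}(kH)$ gives condition (i) of Lemma 1.5, whence $|H|$ is a unit in $k$. The alternative via the trivial module $D$ sketched at the end is not needed, and citing Lemma 1.5 directly is exactly what the paper does.
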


\vspace{-0.05in}

\noindent
{\em Proof.} 
Since $G \in \mathfrak{Y}(k)$, Lemma 3.1 implies that $H \in \mathfrak{Y}(k)$
for any subgroup $H \subseteq G$. Then, any acyclic complex of injective 
$kH$-modules is contractible for any subgroup $H \subseteq G$. Letting 
$H=1$, it follows that any acyclic complex of injective $k$-modules is 
contractible. We now let $H \subseteq G$ be a finite subgroup. Since 
${\tt Fib}(kH) \subseteq \mathcal{I}(kH)$, we have 
${\tt Fib}(kH) \subseteq {\tt Inj}(kH)$, i.e\ ${\tt Fib}(kH) = {\tt Inj}(kH)$. 
Then, Lemma 1.5 implies that the order of $H$ is a unit in $k$. \hfill $\Box$

\medskip

\noindent
We shall use the following result. The proof of assertion (i) is somehow
technical and will be given in Appendix B at the end of the paper, whereas 
assertion (ii) is proved in \cite{ER2}.

\begin{Theorem}
Let $k$ be a commutative ring and $G$ an $\widehat{\mathfrak{F}}(k)$-group.

(i) A $kG$-module $M$ is contained in $^{\perp}{\tt Fib}(kG)$ if and only 
if $\mbox{res}_H^GM \in \!\!\, ^{\perp}{\tt Fib}(kH)$ for any finite subgroup 
$H \subseteq G$.

(ii) (cf.\ \cite[Proposition 4.3]{ER2})
If any acyclic complex of injective $k$-modules is contractible,
then ${\tt Fib}(kG) = \mathcal{I}(kG)$. \hfill $\Box$
\end{Theorem}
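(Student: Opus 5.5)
We only need to prove assertion (i); assertion (ii) is \cite[Proposition 4.3]{ER2}. One direction is formal. Assume $M \in {^{\perp}{\tt Fib}(kG)}$ and let $H \subseteq G$ be finite. Take $N \in {\tt Fib}(kH)$. Then $\mbox{Ext}^1_{kH}(\mbox{res}_H^GM,N) \simeq \mbox{Ext}^1_{kG}(M,\mbox{coind}_H^GN)$, so it suffices to show that coinduction preserves fibrant modules. For this we use the isomorphism of diagonal modules
\[ \mbox{Hom}_k(B(G,k),\mbox{coind}_H^GN) \simeq \mbox{coind}_H^G\mbox{Hom}_k(\mbox{res}_H^GB(G,k),N) . \]
Since $B(H,k)$ is a $kH$-direct summand of $\mbox{res}_H^GB(G,k)$, we have to check that $\mbox{Hom}_k(\mbox{res}_H^GB(G,k),N)$ is $kH$-injective. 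For $H$ finite, $\mbox{res}_H^GB(G,k)$ is $kH$-free. Moreover $N$ is $k$-injective, by Lemma 1.4(ii). So the claim follows from the adjunction isomorphisms of \S 1.II: $\mbox{Hom}_k(P,N)$ is injective whenever $P$ is projective and $N$ is $k$-injective. Coinduction preserves injectives, so $\mbox{coind}_H^GN$ is fibrant and this direction is done.

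The converse is the substantial part, and we argue by transfinite induction along the hierarchy defining $\widehat{\mathfrak{F}}(k)$. Let $\mathfrak{Z}$ be the class of groups $G$ for which the following holds for all subgroups: a $kG$-module $M$ lies in ${^{\perp}{\tt Fib}(kG)}$ whenever all its restrictions to finite subgroups lie in the corresponding ${^{\perp}{\tt Fib}}$. We will show that $\mathfrak{Z}$ contains $\mathfrak{F}$ and is closed under ${\scriptstyle{{\bf LH}}}$ and $\Phi(k)_{inj}$. Finite groups are trivially in $\mathfrak{Z}$.

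\emph{Closure under ${\scriptstyle{{\bf H}}}$.} Let $G$ act on a finite dimensional contractible complex $X$ with stabilizers in $\mathfrak{Z}$. We use the augmented cellular chain complex $0 \to C_n \to \cdots \to C_0 \to k \to 0$, which is $k$-split and consists of permutation modules $\bigoplus \mbox{ind}_{G_\sigma}^G k$. Given fibrant $N$, we apply $\mbox{Hom}_k(C_\ast,N)$. This gives a finite coresolution of $N$ by products of modules $\mbox{coind}_{G_\sigma}^G\mbox{res}N$. For $M$ with the restriction hypothesis, the induction hypothesis on the stabilizers together with Eckmann--Shapiro gives $\mbox{Ext}^{\ge 1}_{kG}(M,\mbox{coind}\,\mbox{res}N)=0$. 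Here we use that the class ${^{\perp}{\tt Fib}}$ is closed under syzygies and that ${\tt Fib}$ is coresolving, so higher Ext groups also vanish. Dimension shifting along the finite coresolution then gives $\mbox{Ext}^1_{kG}(M,N)=0$.

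\emph{Closure under ${\bf L}$.} Here we must pass from finitely generated subgroups to $G$. We first reduce to checking $\mbox{Ext}^1$ against fibrant modules via a deconstruction argument: since the pair $({^{\perp}{\tt Fib}},{\tt Fib})$ is cogenerated by a set, we try to exhibit $M$ as a filtered union. This is the step I expect to be the main obstacle, since Ext does not commute with direct limits in general. My first attempt would be to mimic \cite[Corollary 6.4]{ER1} (the cofibrant analogue, Theorem 2.3(i)). That is, characterize ${^{\perp}{\tt Fib}(kG)}$ as the modules $M$ for which $M\otimes_k B(G,k)$ has a suitable property, for instance is in ${^{\perp}{\tt Inj}}$ modulo a fibrant-replacement argument. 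The technical facts on fibrant modules from Appendix A would then be used to show that fibrancy, and hence this orthogonality, can be tested on a cofinal family of subgroups via the continuity used in \cite[Lemma 1.3]{ET25b}.

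\emph{Closure under $\Phi(k)_{inj}$.} Let $G \in \Phi(k)_{inj}\mathfrak{C}$ with $\mathfrak{C} \subseteq \mathfrak{Z}$, and take $M$ as in the hypothesis. We take a fibrant $N$ and use a complete $({^{\perp}{\tt Fib}},{\tt Fib})$-approximation $0\to M \to V \to U\to 0$ with $V$ fibrant. Restricted to any $\mathfrak{C}$-subgroup $C$, the inductive hypothesis shows that $\mbox{res}M$ is in ${^{\perp}{\tt Fib}(kC)}$, which gives uniformly bounded injective-type dimension statements. The module $\mbox{Hom}_k(B(G,k),N)$ has restrictions of injective dimension $0$ uniformly on $\mathfrak{C}$-subgroups, hence finite injective dimension over $kG$ by the defining property of $\Phi(k)_{inj}$. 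Combining this with the fact that $N$ is a kernel of a complex of injectives with fibrant kernels \cite[Proposition 4.2(iii)]{ER2}, we get $\mbox{Ext}^1_{kG}(M,N)=0$.
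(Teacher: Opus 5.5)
Your forward direction is correct; it is the finite-group case of Lemma A.3 and Proposition A.2 of the paper. Your ${\scriptstyle{{\bf H}}}$-step follows the paper's argument, except at the very end. Dimension shifting along $0 \to N \to N^0 \to \cdots \to N^d \to 0$ only gives $\mbox{Ext}^n_{kG}(M,N)=0$ for $n>d$, not for $n=1$. You must add that every fibrant $N$ is the $d$-th cosyzygy of another fibrant module $N'$ (by \cite[Proposition 4.2(iii)]{ER2}), so that $\mbox{Ext}^1_{kG}(M,N) \simeq \mbox{Ext}^{d+1}_{kG}(M,N') = 0$.

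The serious problems are in the other two steps. The ${\bf L}$-step is left open: you name the obstacle but do not resolve it, and the suggested route via $M \otimes_k B(G,k)$ and cofinal families is not carried out and is not what is needed. The missing ingredient is that ${^{\perp}}{\tt Fib}(kH)$ is closed under direct limits. This follows from its thickness \cite[Proposition 4.6]{ER2} together with Gillespie's result \cite[Proposition 3.1]{G}. With it, you induct on the cardinality of the ${\scriptstyle{{\bf LH}}}\mathfrak{C}$-group $H$:
\begin{itemize}
\item countable such groups lie in ${\scriptstyle{{\bf H}}}\mathfrak{C}$, where your ${\scriptstyle{{\bf H}}}$-step applies;
\item an uncountable $H$ is a continuous ascending union of smaller subgroups $H_\lambda$, with $\mbox{res}_{H_\lambda}M \in {^{\perp}}{\tt Fib}(kH_\lambda)$ by induction;
\item by Eckmann--Shapiro (restriction preserves fibrancy), the modules $\mbox{ind}_{H_\lambda}^H\mbox{res}_{H_\lambda}M$ lie in ${^{\perp}}{\tt Fib}(kH)$, and their direct limit is $\mbox{res}_HM$.
\end{itemize}

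Your $\Phi(k)_{inj}$-step is not an argument. Since $N$ is fibrant, $\mbox{Hom}_k(B(G,k),N)$ is already $kG$-injective by definition, so invoking $\Phi(k)_{inj}$ for it yields nothing. Your concluding sentence does not produce $\mbox{Ext}^1_{kG}(M,N)=0$. The property $\Phi(k)_{inj}$ must instead be applied to the fibrant term of an approximation sequence.

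Take your sequence $0 \to M \to V \to U \to 0$, with $V$ fibrant and $U \in {^{\perp}}{\tt Fib}(kG)$. For a $\mathfrak{C}$-subgroup $F$, the module $\mbox{res}_FV$ is fibrant and sits in $0 \to \mbox{res}_FM \to \mbox{res}_FV \to \mbox{res}_FU \to 0$. If both ends lie in ${^{\perp}}{\tt Fib}(kF)$, then $\mbox{res}_FV \in {\tt Fib}(kF) \cap {^{\perp}}{\tt Fib}(kF) = {\tt Inj}(kF)$. Hence $\mbox{id}_{kG}V < \infty$, so $V \in {^{\perp}}{\tt Fib}(kG)$ by \cite[Proposition 4.6]{ER2}. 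Then $M = \ker(V \to U)$ lies in ${^{\perp}}{\tt Fib}(kG)$ by thickness. The paper argues the same way with a sequence $0 \to N \to L \to M \to 0$, where $L \in {^{\perp}}{\tt Fib}$ and $N$ is fibrant.

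The crucial input, absent from your plan, is $\mbox{res}_FU \in {^{\perp}}{\tt Fib}(kF)$. That is, restriction to the generally infinite subgroup $F$ must preserve ${^{\perp}}{\tt Fib}$; equivalently, $\mbox{coind}_F^G$ must preserve fibrant modules. Your forward-direction argument proves this only for finite $F$, where $B(G,k)$ is $kF$-free. For $F \in \widehat{\mathfrak{F}}(k)$ it is the entire content of Appendix A (the class $\mathfrak{J}$, Corollary A.7), which needs its own transfinite induction. Your class $\mathfrak{Z}$ does not record this property. So the induction has to be run inside $\widehat{\mathfrak{F}}(k)$, as in Proposition B.1 with $\mathfrak{C} \subseteq \widehat{\mathfrak{F}}(k)$, using Appendix A as an additional input.
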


\noindent
We can now state and prove the analogue of Theorem 2.6 for acyclic
complexes of injectives. 

\begin{Theorem}
If $k$ is a commutative ring and $G$ an $\widehat{\mathfrak{F}}(k)$-group,
then the following conditions are equivalent:

(i) The order of any finite subgroup of $G$ is a unit in $k$ and any 
acyclic complex of injective $k$-modules is contractible.

(ii) Any acyclic complex of injective $kG$-modules is contractible,
i.e.\ $\mathcal{I}(kG) = {\tt Inj}(kG)$.
\end{Theorem}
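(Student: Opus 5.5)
The implication (ii)$\rightarrow$(i) is exactly Corollary 3.2, which needs no hypothesis on $G$. So the work lies in (i)$\rightarrow$(ii). Since acyclic complexes of injective $k$-modules are contractible, Theorem 3.3(ii) gives ${\tt Fib}(kG) = \mathcal{I}(kG)$. It therefore suffices to show ${\tt Fib}(kG) \subseteq {\tt Inj}(kG)$.

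The plan is to show that $^{\perp}{\tt Fib}(kG)$ is the class of all $kG$-modules. Since $(^{\perp}{\tt Fib}(kG),{\tt Fib}(kG))$ is a cotorsion pair, this forces ${\tt Fib}(kG) = (^{\perp}{\tt Fib}(kG))^{\perp}$ to consist of modules $N$ with ${\rm Ext}^1_{kG}(\_\!\_,N)=0$, i.e.\ injective modules. By Theorem 3.3(i), it is enough to check that for an arbitrary $kG$-module $M$ and every finite subgroup $H \subseteq G$, the restriction $\mbox{res}_H^GM$ lies in $^{\perp}{\tt Fib}(kH)$.

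This last point is where hypothesis (i) enters. Since $|H|$ is a unit in $k$, Lemma 1.5 gives ${\tt Fib}(kH) = {\tt Inj}(kH)$. Consequently $^{\perp}{\tt Fib}(kH) = {}^{\perp}{\tt Inj}(kH)$ is the class of all $kH$-modules, and in particular contains $\mbox{res}_H^GM$. Hence $M \in {}^{\perp}{\tt Fib}(kG)$ for all $M$, so every fibrant $kG$-module is injective, and $\mathcal{I}(kG) = {\tt Fib}(kG) = {\tt Inj}(kG)$.

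The argument is formal once Theorem 3.3 is available. The only genuinely hard ingredient is Theorem 3.3(i), the detection of $^{\perp}{\tt Fib}$ on finite subgroups for $\widehat{\mathfrak{F}}(k)$-groups, which is deferred to Appendix B and assumed here. The one point to double check is that orthogonality to ${\rm Ext}^1$ against all modules really characterizes injectives, which is immediate.
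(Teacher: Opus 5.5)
Your proposal is correct and follows essentially the same route as the paper. The paper likewise obtains (ii)$\rightarrow$(i) from Corollary 3.2. For (i)$\rightarrow$(ii) it uses Lemma 1.5 and Theorem 3.3(i) to make $^{\perp}{\tt Fib}(kG)$ the class of all modules, so that ${\tt Fib}(kG)={\tt Inj}(kG)$, and then invokes Theorem 3.3(ii).
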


\vspace{-0.05in}

\noindent
{\em Proof.} 
The implication (ii)$\rightarrow$(i) follows from Corollary 3.2 (which
holds without any assumption on the group $G$). We now assume that (i) 
holds and let $H \subseteq G$ be a finite subgroup. Since the order of 
$H$ is invertible in $k$, Lemma 1.5 implies that 
${\tt Fib}(kH) = {\tt Inj}(kH)$ and hence the left orthogonal class
$^{\perp}{\tt Fib}(kH)$ is the class of all $kH$-modules. Invoking 
Theorem 3.3(i), we conclude that the left orthogonal class
$^{\perp}{\tt Fib}(kG)$ is the class of all $kG$-modules. It follows 
that ${\tt Fib}(kG) = \left(  ^{\perp}{\tt Fib}(kG) \right) \! ^{\perp}$
is the class of injective $kG$-modules. In view of our assumption that 
any acyclic complex of injective $k$-modules is contractible, Theorem 
3.3(ii) implies that $\mathcal{I}(kG) = {\tt Fib}(kG)$ and hence
$\mathcal{I}(kG) = {\tt Inj}(kG)$. \hfill $\Box$

\begin{Corollary}
Let $G$ be an $\widehat{\mathfrak{F}}(\mathbb{Q})$-group. Then, any 
acyclic complex of injective $\mathbb{Q}G$-modules is contractible, 
i.e.\ $\mathcal{I}(\mathbb{Q}G) = {\tt Inj}(\mathbb{Q}G)$. \hfill $\Box$
\end{Corollary}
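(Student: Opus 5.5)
This is a direct application of Theorem 3.4 with $k=\mathbb{Q}$. I will verify that condition (i) of that theorem holds for any $\widehat{\mathfrak{F}}(\mathbb{Q})$-group $G$, and then read off condition (ii), which is exactly $\mathcal{I}(\mathbb{Q}G) = {\tt Inj}(\mathbb{Q}G)$.

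Condition (i) has two parts. The first concerns finite subgroups: the order of any finite subgroup $H \subseteq G$ is a positive integer, hence a unit in $\mathbb{Q}$. No assumption on $G$ is needed for this.

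The second part asks that every acyclic complex of injective $\mathbb{Q}$-modules be contractible. Since $\mathbb{Q}$ is a field, every $\mathbb{Q}$-module is injective (and projective). Let ${\bf I}$ be an acyclic complex of $\mathbb{Q}$-vector spaces. Each short exact sequence
\[ 0 \longrightarrow Z_n \longrightarrow I_n \longrightarrow Z_{n-1} \longrightarrow 0 \]
of cycles splits, because $Z_n$ is injective. Choosing compatible splittings in every degree gives a contracting homotopy for ${\bf I}$. Equivalently, every kernel of ${\bf I}$ is injective, so $\mathcal{I}(\mathbb{Q}) = {\tt Inj}(\mathbb{Q})$.

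With both parts of (i) established, Theorem 3.4 yields (ii), as required. I do not expect any real obstacle here. The only point to check is that Theorem 3.4 is being applied to the correct class: its hypothesis is that $G$ is an $\widehat{\mathfrak{F}}(k)$-group, and with $k=\mathbb{Q}$ this is precisely the assumption on $G$. All the substantive work is contained in Theorem 3.3 and its proof in Appendix B.
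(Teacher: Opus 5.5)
Your proposal is correct and follows the paper's own (implicit) argument: Corollary 3.5 is Theorem 3.4 applied with $k=\mathbb{Q}$. Condition (i) of that theorem holds because positive integers are units in $\mathbb{Q}$, and every acyclic complex of $\mathbb{Q}$-vector spaces splits and is therefore contractible.
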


\noindent
Recall that, for notational simplicity, we denote the class
$\widehat{\mathfrak{F}}(\mathbb{Z})$ by $\widehat{\mathfrak{F}}$.

\begin{Corollary}
Let $G$ be a torsion-free $\widehat{\mathfrak{F}}$-group. Then, any 
acyclic complex of injective $\mathbb{Z}G$-modules is contractible, 
i.e.\ $\mathcal{I}(\mathbb{Z}G) = {\tt Inj}(\mathbb{Z}G)$. \hfill $\Box$
\end{Corollary}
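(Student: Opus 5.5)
The statement is Corollary 3.6: every acyclic complex of injective $\mathbb{Z}G$-modules is contractible when $G$ is a torsion-free $\widehat{\mathfrak{F}}$-group. I will deduce it from Theorem 3.4 with $k=\mathbb{Z}$. Recall that $\widehat{\mathfrak{F}}=\widehat{\mathfrak{F}}(\mathbb{Z})$, so $G$ is an $\widehat{\mathfrak{F}}(\mathbb{Z})$-group and Theorem 3.4 applies. It is therefore enough to check condition (i) of that theorem for the pair $(\mathbb{Z},G)$. Condition (ii) then gives $\mathcal{I}(\mathbb{Z}G)={\tt Inj}(\mathbb{Z}G)$.

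The first half of condition (i) concerns finite subgroups. Since $G$ is torsion-free, its only finite subgroup is the trivial one. Its order $1$ is a unit in $\mathbb{Z}$.

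The second half asks that every acyclic complex of injective abelian groups be contractible. I argue this directly, using that $\mathbb{Z}$ is hereditary.

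\begin{itemize}
\item Let ${\bf I}$ be an acyclic complex of injective $\mathbb{Z}$-modules. Each kernel $K$ is the image of an injective, hence divisible, group.
\item Quotients of divisible groups are divisible, so $K$ is divisible. Over $\mathbb{Z}$, divisible means injective, so every kernel is injective.
\item Each short exact sequence $0\to K\to I\to K'\to 0$ has injective first term and therefore splits. Hence ${\bf I}$ is contractible.
\end{itemize}

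This divisibility argument also settles the case of $\mathbb{Q}$-vector spaces needed for Corollary 3.5. Alternatively, one can note that $\mathbb{Z}$ has finite global dimension, so the kernels have finite injective dimension and are therefore injective.

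The hypotheses of Theorem 3.4(i) are thus satisfied, and (i)$\rightarrow$(ii) of Theorem 3.4 yields the corollary. There is no real obstacle here: all the substantive work sits in Theorem 3.3(i), whose proof is deferred to Appendix B. The only point to check is that the class in the hypothesis is $\widehat{\mathfrak{F}}(\mathbb{Z})$ itself, with no passage to another ring needed. That holds by the notational convention fixed just before the statement.
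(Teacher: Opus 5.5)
Your proposal is correct and matches the paper, which leaves the proof implicit because it is just the case $k=\mathbb{Z}$ of Theorem 3.4. Condition (i) there holds because a torsion-free group has only the trivial finite subgroup, and every acyclic complex of injective abelian groups is contractible: its kernels are quotients of divisible groups, hence divisible, hence injective, so the complex splits.
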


\noindent
{\bf Remark 3.7.}
As we have noted in $\S $1.I, the class $\widehat{\mathfrak{F}}(k)$
is a subclass of $\overline{\mathfrak{F}}(k)$ for any commutative 
ring $k$. Hence, we may invoke Theorem 2.6 and conclude that the 
equivalent conditions of Theorem 3.4 are also equivalent to the 
contractibility of any acyclic complex of projective $kG$-modules 
(i.e.\ to the equality $\mathcal{P}(kG) = {\tt Proj}(kG)$) and also 
to the pure-acyclicity of any acyclic complex of flat $kG$-modules 
(i.e.\ to the equality $\mathcal{F}(kG) = {\tt Flat}(kG)$). In analogy,
the same conclusions also apply to Corollaries 3.5 and 3.6.  
\addtocounter{Lemma}{1}

\section{Applications}

\noindent
We present a few applications of the results obtained earlier,
concerning modules which admit complete resolutions, groups with 
periodic cohomology after some steps and the relation between the
Gorenstein and the classical homological dimensions for modules 
over certain group algebras. 

We consider the group classes 
$\overline{\mathfrak{F}} = \overline{\mathfrak{F}}(\mathbb{Z})$ and 
$\widehat{\mathfrak{F}} = \widehat{\mathfrak{F}}(\mathbb{Z})$. As we
have observed in $\S $1.I, there are inclusions 
$\overline{\mathfrak{F}} \subseteq \overline{\mathfrak{F}}(\mathbb{Q})$ 
and $\widehat{\mathfrak{F}} \subseteq \widehat{\mathfrak{F}}(\mathbb{Q})$. 
These four group classes are pictured in the following diagram
\[
\begin{array}{ccc}
 \overline{\mathfrak{F}} & \longrightarrow & 
 \overline{\mathfrak{F}}(\mathbb{Q}) \\
 \uparrow & & \uparrow \\
 \widehat{\mathfrak{F}} & \longrightarrow & 
 \widehat{\mathfrak{F}}(\mathbb{Q})
\end{array} \]
where all arrows are inclusions. We note that
${\scriptstyle{{\bf LH}}}\mathfrak{F}$ is contained in 
$\widehat{\mathfrak{F}}$ (and hence in all of these four 
group classes).

\medskip

\noindent
{\sc I. Complete resolutions.}
Let $k$ be a commutative ring and $G$ a group. We say that a $kG$-module 
$M$ admits a complete projective resolution of coincidence index $n$ if 
there exists a projective resolution of $M$ whose $n$-th syzygy 
$\Omega^nM$ is contained in $\mathcal{P}(kG)$. Of course, any 
$kG$-module $M$ with $\mbox{pd}_{kG}M \leq n$ admits a complete projective 
resolution of coincidence index $n$. We say that a $kG$-module admits a 
complete projective resolution if it admits a complete projective resolution 
of coincidence index $n$, for some non-negative integer $n$.

\begin{Proposition}
Let $n$ be a non-negative integer.

(i) If $G \in \overline{\mathfrak{F}}(\mathbb{Q})$, then a 
$\mathbb{Q}G$-module $M$ admits a complete projective resolution of 
coincidence index $n$ if and only if $\mbox{pd}_{\mathbb{Q}G}M \leq n$.

(ii) If $G \in \overline{\mathfrak{F}}$ is torsion-free, then a 
$\mathbb{Z}G$-module $M$ admits a complete projective resolution of 
coincidence index $n$ if and only if $\mbox{pd}_{\mathbb{Z}G}M \leq n$.
\end{Proposition}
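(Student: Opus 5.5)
The proof reduces immediately to Corollaries 2.7(i) and 2.8(i), which give $\mathcal{P}(kG) = {\tt Proj}(kG)$ in both situations. Here $k=\mathbb{Q}$ with $G \in \overline{\mathfrak{F}}(\mathbb{Q})$ in case (i), and $k = \mathbb{Z}$ with $G$ a torsion-free $\overline{\mathfrak{F}}$-group in case (ii). Once this equality is available, the argument is the same for both cases, so we treat them simultaneously and write $k$ for $\mathbb{Q}$ or $\mathbb{Z}$.

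\emph{Sufficiency.} Suppose $\mbox{pd}_{kG}M \leq n$. Take any projective resolution of $M$. Its $n$-th syzygy $\Omega^nM$ is projective, by the standard fact (Schanuel's lemma) that in any projective resolution of a module of projective dimension at most $n$ the $n$-th syzygy is projective. Since ${\tt Proj}(kG) \subseteq \mathcal{P}(kG)$, this resolution is a complete projective resolution of coincidence index $n$. (The paper already notes this direction.)

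\emph{Necessity.} Suppose $M$ has a projective resolution ${\bf P} \longrightarrow M$ whose $n$-th syzygy $\Omega^nM$ lies in $\mathcal{P}(kG)$. By Corollary 2.7(i), respectively Corollary 2.8(i), the module $\Omega^nM$ is then projective. Truncating ${\bf P}$ at degree $n$ gives an exact sequence
\[ 0 \longrightarrow \Omega^nM \longrightarrow P_{n-1} \longrightarrow \cdots \longrightarrow P_0 \longrightarrow M \longrightarrow 0 \]
in which every term other than $M$ is projective. This is a projective resolution of length $n$, so $\mbox{pd}_{kG}M \leq n$.

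There is no real obstacle here; all the substantive work was done in Theorem 2.6 and its corollaries. The only point requiring care is the convention for indexing syzygies, so that the $n$-th syzygy is the kernel of $P_{n-1} \rightarrow P_{n-2}$ (with $\Omega^0M = M$). With that convention the truncated resolution has length exactly $n$, and the bound $n$ matches in both directions.
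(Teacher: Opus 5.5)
Your proposal is correct and follows the same route as the paper. The converse direction uses $\mathcal{P}(kG)={\tt Proj}(kG)$ from Corollary 2.7(i), respectively 2.8(i), to conclude that $\Omega^nM$ is projective, so $\mbox{pd}_{kG}M\leq n$. The forward direction is the trivial observation the paper also makes.
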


\vspace{-0.05in}

\noindent
{\em Proof.}
(i) If $M$ admits a complete projective resolution of coincidence index 
$n$, then there is a projective resolution of $M$ whose $n$-th syzygy 
$\Omega^nM$ is contained in $\mathcal{P}(\mathbb{Q}G)$. Since 
$\mathcal{P}(\mathbb{Q}G) = {\tt Proj}(\mathbb{Q}G)$ (cf.\ Corollary 
2.7(i)), it follows that $\Omega^nM \in {\tt Proj}(\mathbb{Q}G)$ and 
hence $\mbox{pd}_{\mathbb{Q}G}M \leq n$.

(ii) This follows as in (i) above, using Corollary 2.8(i). \hfill $\Box$

\medskip

\noindent
We say that the group $G$ admits a complete projective resolution (of 
coincidence index $n$) if the trivial $\mathbb{Z}G$-module $\mathbb{Z}$ 
admits a complete projective resolution (of coincidence index $n$). In 
that case, any subgroup $H \subseteq G$ admits a complete projective 
resolution (of coincidence index $n$) as well and, for any commutative 
coefficient ring $k$, the trivial $kG$-module $k$ admits a complete 
projective resolution (of coincidence index $n$).\footnote{Indeed,
any acyclic complex of projective $\mathbb{Z}G$-modules ${\bf P}$ 
is necessarily contractible as a complex of abelian groups and hence 
$k \otimes_{\mathbb{Z}} {\bf P}$ is an {\em acyclic} complex of 
projective $kG$-modules, even if $k$ is not $\mathbb{Z}$-flat.} 

\begin{Corollary}
Let $n$ be a non-negative integer and $G$ a group that admits a complete 
projective resolution of coincidence index $n$.

(i) If $H \subseteq G$ is an
$\overline{\mathfrak{F}}(\mathbb{Q})$-subgroup, then 
$\mbox{cd}_{\mathbb{Q}}H \leq n$.

(ii) If $H \subseteq G$ is a torsion-free 
$\overline{\mathfrak{F}}$-subgroup, then 
$\mbox{cd}_{\mathbb{Z}}H \leq n$. \hfill $\Box$
\end{Corollary}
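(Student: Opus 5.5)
The plan is to reduce to Proposition 4.1, using the remark preceding the Corollary that complete projective resolutions restrict to subgroups and extend along changes of coefficient ring.

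\emph{Step 1: restriction to $H$.} By hypothesis the trivial $\mathbb{Z}G$-module $\mathbb{Z}$ has a projective resolution whose $n$-th syzygy $\Omega^n\mathbb{Z}$ is a cokernel of an acyclic complex ${\bf P}$ of projective $\mathbb{Z}G$-modules. Restriction to $H$ preserves projectivity and exactness. Hence $\mbox{res}_H^G$ of the resolution is a projective resolution of the trivial $\mathbb{Z}H$-module $\mathbb{Z}$, and its $n$-th syzygy is a cokernel of $\mbox{res}_H^G{\bf P}$. So $H$ admits a complete projective resolution of coincidence index $n$.

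\emph{Step 2, assertion (ii).} When $H$ is a torsion-free $\overline{\mathfrak{F}}$-group, Proposition 4.1(ii) applied to $M=\mathbb{Z}$ gives $\mbox{pd}_{\mathbb{Z}H}\mathbb{Z}\leq n$, that is, $\mbox{cd}_{\mathbb{Z}}H\leq n$.

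\emph{Step 3, assertion (i).} Apply $\mathbb{Q}\otimes_{\mathbb{Z}}\_\!\_$ to the data from Step 1. Since $\mathbb{Q}$ is $\mathbb{Z}$-flat, the projective resolution of $\mathbb{Z}$ over $\mathbb{Z}H$ becomes a projective resolution of the trivial $\mathbb{Q}H$-module $\mathbb{Q}$. Its $n$-th syzygy is $\mathbb{Q}\otimes_{\mathbb{Z}}\Omega^n\mathbb{Z}$, which is a cokernel of the acyclic complex of projective $\mathbb{Q}H$-modules $\mathbb{Q}\otimes_{\mathbb{Z}}\mbox{res}_H^G{\bf P}$; acyclicity also follows from the footnote. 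Thus the trivial $\mathbb{Q}H$-module $\mathbb{Q}$ admits a complete projective resolution of coincidence index $n$. Since $H\in\overline{\mathfrak{F}}(\mathbb{Q})$, Proposition 4.1(i) gives $\mbox{pd}_{\mathbb{Q}H}\mathbb{Q}\leq n$, i.e.\ $\mbox{cd}_{\mathbb{Q}}H\leq n$.

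\emph{Main point to check.} The only step needing care is that the syzygy of the transported resolution really is the transported syzygy, so the coincidence index $n$ is preserved. This holds because restriction and $\mathbb{Q}\otimes_{\mathbb{Z}}\_\!\_$ are exact and therefore commute with taking kernels. Apart from this, the argument is routine.
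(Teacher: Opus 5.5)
Your proof is correct and is essentially the argument the paper leaves implicit. The remark preceding the Corollary (complete resolutions pass to subgroups and to the trivial $\mathbb{Q}H$-module) is combined with Proposition 4.1 applied to the trivial module. The only difference is that you use the flatness of $\mathbb{Q}$ over $\mathbb{Z}$ in place of the footnote's contractibility argument, which is equally valid here.
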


\noindent
A special class of groups that admit complete resolutions is the class
of groups with periodic cohomology after some steps. Following \cite{T80},
we say that $G$ has cohomological period $q$ after $s$ steps (where $q>0$ 
and $s \geq 0$) if there is a projective resolution of the trivial 
$\mathbb{Z}G$-module $\mathbb{Z}$ with syzygies $(R_n)_n$, such that 
$R_s \simeq R_{q+s}$. In that case, $R_s \in \mathcal{P}(\mathbb{Z}G)$ 
and the group cohomology functors $H^n(G,\_\!\_)$ and $H^{n+q}(G,\_\!\_)$ 
are naturally isomorphic if $n>s$; in particular, $G$ admits a complete 
projective resolution of coincidence index $s$. If $s \geq 0$, we say 
that $G$ has periodic cohomology after $s$ steps if $G$ has period $q$ 
after $s$ steps, for some $q>0$.

The problem of deciding which groups $G$ admit a finite dimensional 
free $G$-CW-complex homotopy equivalent to a sphere seems to be 
difficult. Such a group has necessarily periodic cohomology after some 
steps; this follows from \cite[Proposition 5.1 and Corollary 5.2]{MT}.
It is proved in \cite[Theorem B]{MT} that the converse of the latter 
assertion is true, in the case where $G$ is an 
${\scriptstyle{{\bf H}}}\mathfrak{F}$-group that has a bound on the 
orders of its finite subgroups. It is conjectured in \cite{MT} that 
a torsion-free group $G$ has periodic cohomology after some steps 
only if $G$ has finite cohomological dimension (so that 
$G \in {\scriptstyle{{\bf H}}}_1\mathfrak{F}$). We prove this 
conjecture for torsion-free groups in $\overline{\mathfrak{F}}$ and 
obtain thereby a plethora of groups $G$ which do not admit a finite 
dimensional free $G$-CW-complex homotopy equivalent to a sphere. 

The following result was proved in \cite[Theorem 4.9]{MT} for 
${\scriptstyle{{\bf H}}}\mathfrak{F}$-groups, with a slightly 
weaker bound for the rational cohomological dimension, although 
the proofs there are different. Here, our approach is based on the 
properties of complete projective resolutions.

\begin{Corollary}
Let $G$ be a group with periodic cohomology after $s$ steps, for 
some $s \geq 0$.

(i) If $H \subseteq G$ is an
$\overline{\mathfrak{F}}(\mathbb{Q})$-subgroup, then 
$\mbox{cd}_{\mathbb{Q}}H \leq s$.

(ii) If $H \subseteq G$ is a torsion-free 
$\overline{\mathfrak{F}}$-subgroup, then 
$\mbox{cd}_{\mathbb{Z}}H \leq s$. \hfill $\Box$
\end{Corollary}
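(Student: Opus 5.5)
The plan is to reduce the statement to Corollary 4.2, whose hypothesis is that $G$ admits a complete projective resolution of coincidence index $s$. So it suffices to show that periodic cohomology after $s$ steps produces such a resolution, i.e.\ that the $s$-th syzygy of the trivial module lies in $\mathcal{P}(\mathbb{Z}G)$. Both (i) and (ii) then follow at once from the corresponding parts of Corollary 4.2, applied to the given subgroup $H$.

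The first step is to fix a projective resolution ${\bf P} \longrightarrow \mathbb{Z}$ with syzygies $(R_n)_n$ and an isomorphism $R_s \simeq R_{q+s}$, as in the definition following Corollary 4.2. Next, we build a doubly infinite acyclic complex of projective $\mathbb{Z}G$-modules having $R_s$ as a cokernel. Its right half is the truncated resolution
\[ \cdots \longrightarrow P_{s+1} \longrightarrow P_s \longrightarrow R_s \longrightarrow 0 . \]
For the left half, we use the exact sequence
\[ 0 \longrightarrow R_{q+s} \longrightarrow P_{q+s-1} \longrightarrow \cdots \longrightarrow P_s \longrightarrow R_s \longrightarrow 0 . \]
Precomposing its first map with the isomorphism $R_s \simeq R_{q+s}$ gives a coresolution segment $0 \to R_s \to P_{q+s-1} \to \cdots \to P_s \to R_s \to 0$ of length $q$ by projectives. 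This segment begins and ends at $R_s$, so we can splice copies of it end to end indefinitely to the left. Gluing this to the right half yields the required acyclic complex of projectives, so $R_s \in \mathcal{P}(\mathbb{Z}G)$. The paper already asserts this in the paragraph preceding the statement, so it only needs recording.

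Hence $G$ admits a complete projective resolution of coincidence index $s$. Corollary 4.2(i) gives $\mbox{cd}_{\mathbb{Q}}H \leq s$ for any $\overline{\mathfrak{F}}(\mathbb{Q})$-subgroup $H$. Corollary 4.2(ii) gives $\mbox{cd}_{\mathbb{Z}}H \leq s$ for any torsion-free $\overline{\mathfrak{F}}$-subgroup $H$.

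There is no real obstacle. The only point to check is that Corollary 4.2 really covers subgroups, and for the rational part passes from $\mathbb{Z}$ to $\mathbb{Q}$ coefficients. This is handled by the remark before Corollary 4.2 on restriction and change of coefficients: $\mathbb{Q}\otimes_{\mathbb{Z}}{\bf P}$ stays acyclic because ${\bf P}$ is $\mathbb{Z}$-split. After that, Proposition 4.1 applies to the trivial module, with Corollaries 2.7 and 2.8 doing the real work.
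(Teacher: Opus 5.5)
Your proposal is correct and follows the paper's own route: the paper notes just before the statement that $R_s \simeq R_{q+s}$ forces $R_s \in \mathcal{P}(\mathbb{Z}G)$, so $G$ admits a complete projective resolution of coincidence index $s$, and the statement is then read off from Corollary 4.2. Your splicing of the length-$q$ segment $0 \to R_s \to P_{q+s-1} \to \cdots \to P_s \to R_s \to 0$ simply spells out the step the paper leaves implicit.
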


\begin{Corollary}
Let $G$ be a group containing a torsion-free subgroup
$H \in \overline{\mathfrak{F}} \setminus 
 {\scriptstyle{{\bf H}}}_1\mathfrak{F}$.
Then, $G$ does not have periodic cohomology after some steps. 
In particular, $G$ does not admit a finite dimensional free 
$G$-CW-complex homotopy equivalent to a sphere. \hfill $\Box$
\end{Corollary}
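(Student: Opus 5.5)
The plan is to argue by contradiction, combining Corollary 4.3(ii) with the fact that $H$ lies outside ${\scriptstyle{{\bf H}}}_1\mathfrak{F}$.

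Suppose $G$ has periodic cohomology after $s$ steps, for some $s \geq 0$. The torsion-free subgroup $H$ lies in $\overline{\mathfrak{F}}$, so Corollary 4.3(ii) gives $\mbox{cd}_{\mathbb{Z}}H \leq s < \infty$. Concretely, this comes from the chain of earlier results: $G$ admits a complete projective resolution of coincidence index $s$, hence so does $H$; then Corollary 2.8(i) shows that the $s$-th syzygy is projective over $\mathbb{Z}H$.

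Since $\mbox{cd}_{\mathbb{Z}}H$ is finite, the trivial module $\mathbb{Z}$ has a projective $\mathbb{Z}H$-resolution of finite length. By a standard Eilenberg--Ganea type construction, $H$ then acts freely on a finite dimensional contractible CW-complex. One can take the universal cover of a model of $K(H,1)$ of dimension $\max(\mbox{cd}_{\mathbb{Z}}H,3)$. All isotropy groups of this action are trivial, hence finite, so $H \in {\scriptstyle{{\bf H}}}_1\mathfrak{F}$. This contradicts the hypothesis $H \notin {\scriptstyle{{\bf H}}}_1\mathfrak{F}$. Therefore $G$ does not have periodic cohomology after any number of steps.

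For the final assertion, recall the result cited from \cite[Proposition 5.1 and Corollary 5.2]{MT}: a group admitting a finite dimensional free $G$-CW-complex homotopy equivalent to a sphere has periodic cohomology after some steps. By the first part, $G$ cannot admit such a complex.

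The only point needing any care is the passage from finite cohomological dimension to membership in ${\scriptstyle{{\bf H}}}_1\mathfrak{F}$, which is the step that uses the Eilenberg--Ganea realization. The case $\mbox{cd}_{\mathbb{Z}}H = 2$ causes no difficulty here, because a model of dimension $3$ suffices.
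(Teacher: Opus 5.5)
Your proposal is correct and follows the same route as the paper, whose proof is left implicit. Corollary 4.3(ii) gives $\mbox{cd}_{\mathbb{Z}}H \leq s < \infty$, and finite cohomological dimension puts the torsion-free group $H$ in ${\scriptstyle{{\bf H}}}_1\mathfrak{F}$ (your Eilenberg--Ganea step spells out what the paper states parenthetically), which is a contradiction. The statement about spheres then follows from the quoted result that such groups have periodic cohomology after some steps.
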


\noindent
{\bf Remarks 4.5.}
(i) There are many torsion-free groups contained in 
$\overline{\mathfrak{F}} \setminus 
 {\scriptstyle{{\bf H}}}_1\mathfrak{F}$.
In fact, for any countable ordinal number $\alpha \geq 1$ there 
exists a (finitely generated simple) torsion-free group in 
${\scriptstyle{{\bf H}}}_{\alpha +1} \mathfrak{F} 
 \setminus {\scriptstyle{{\bf H}}}_{\alpha}\mathfrak{F}$;
see \cite[Theorem 5.16]{FS}.
 
(ii) Let $k$ be a commutative ring and $G$ an 
$\overline{\mathfrak{F}}(k)$-group. (Recall that 
the class $\overline{\mathfrak{F}}(k)$ contains all 
${\scriptstyle{{\bf LH}}}\mathfrak{F}$-groups.) Assume that 
any acyclic complex of projective $k$-modules is contractible 
and all finite subgroups of $G$ have order that is invertible 
in $k$. Invoking Theorem 2.6, the results above admit 
generalizations in this setting. In particular, the following 
hold:

(iia) If $n$ is a non-negative integer and $M$ is a $kG$-module that 
admits a complete projective resolution of coincidence index $n$, 
then $\mbox{pd}_{kG}M \leq n$.

(iib) If $n$ is a non-negative integer and $G$ admits a complete projective 
resolution of coincidence index $n$, then $\mbox{cd}_kG \leq n$.

(iic) If $s \geq 0$ and $G$ has periodic cohomology after $s$ steps, then 
$\mbox{cd}_kG \leq s$.
\addtocounter{Lemma}{1}

\medskip

\noindent
{\sc II. Gorenstein homological dimensions.}
Gorenstein homological algebra is the relative homological theory, that 
is based upon the classes of Gorenstein projective, Gorenstein flat and 
Gorenstein injective modules, which were defined in \cite{EJ1,EJT}; see 
also \cite{Hol}.

Even though we are only interested in group algebras here, let $R$ 
be any ring. An acyclic complex of projective modules is called 
totally acyclic if it remains acyclic after applying the functor 
$\mbox{Hom}_R(\_\!\_,P)$ for any projective module $P$. A module 
is Gorenstein projective if it is a cokernel of a totally acyclic 
complex of projective modules; let ${\tt GProj}(R)$ be the class 
of these modules. Then, it is clear that 
${\tt Proj}(R) \subseteq {\tt GProj}(R) \subseteq \mathcal{P}(R)$.
An acyclic complex of flat modules is called totally acyclic if it 
remains acyclic after applying the functor $I \otimes_R\_\!\_$ for 
any injective right module $I$. A module is Gorenstein flat if it 
is a cokernel of a totally acyclic complex of flat modules. It is 
also clear that 
${\tt Flat}(R) \subseteq {\tt GFlat}(R) \subseteq \mathcal{F}(R)$. 
Finally, an acyclic complex of injective modules is called totally 
acyclic if it remains acyclic after applying the functor 
$\mbox{Hom}_R(I,\_\!\_)$ for any injective module $I$. A module is 
Gorenstein injective if it is a kernel of a totally acyclic complex 
of injective modules; let ${\tt GInj}(R)$ be the class of these 
modules. There are inclusions
${\tt Inj}(R) \subseteq {\tt GInj}(R) \subseteq \mathcal{I}(R)$.

The Gorenstein projective dimension $\mbox{Gpd}_RM$ of a module $M$ is 
the length of a shortest resolution of $M$ by Gorenstein projective 
modules. We have an inequality $\mbox{Gpd}_RM \leq \mbox{pd}_RM$, which 
is actually an equality if $\mbox{pd}_RM < \infty$. Analogously, we may 
define the Gorenstein flat dimension $\mbox{Gfd}_RM$ (resp.\ the Gorenstein 
injective dimension $\mbox{Gid}_RM$) of $M$, using resolutions by Gorenstein 
flat modules (resp.\ coresolutions by Gorenstein injective modules). We have 
an inequality $\mbox{Gfd}_RM \leq \mbox{fd}_RM$ (resp.\ 
$\mbox{Gid}_RM \leq \mbox{id}_RM$), which is an equality if 
$\mbox{fd}_RM < \infty$ (resp.\ if $\mbox{id}_RM < \infty$). In the 
case where $R=kG$ is the algebra of a group $G$ with coefficients in a 
commutative ring $k$, the Gorenstein projective dimension (resp.\ the 
Gorenstein flat dimension) of the trivial module $k$ is the Gorenstein 
cohomogical dimension $\mbox{Gcd}_kG$ (resp.\ the Gorenstein homological
dimension $\mbox{Ghd}_kG$) of $G$ over $k$.

The following result is an immediate consequence of Corollary 2.7.

\begin{Proposition}
Let $G$ be an $\overline{\mathfrak{F}}(\mathbb{Q})$-group.

(i) Any Gorenstein projective $\mathbb{Q}G$-module is projective, 
i.e.\ ${\tt GProj}(\mathbb{Q}G) = {\tt Proj}(\mathbb{Q}G)$. For 
any $\mathbb{Q}G$-module $M$ there is an equality 
$\mbox{Gpd}_{\mathbb{Q}G}M = \mbox{pd}_{\mathbb{Q}G}M$; in particular, 
$\mbox{Gcd}_{\mathbb{Q}}G = \mbox{cd}_{\mathbb{Q}}G$.

(ii) Any Gorenstein flat $\mathbb{Q}G$-module is flat, i.e.\ 
${\tt GFlat}(\mathbb{Q}G) = {\tt Flat}(\mathbb{Q}G)$. For any 
$\mathbb{Q}G$-module $M$ there is an equality 
$\mbox{Gfd}_{\mathbb{Q}G}M = \mbox{fd}_{\mathbb{Q}G}M$; in particular, 
$\mbox{Ghd}_{\mathbb{Q}}G = \mbox{hd}_{\mathbb{Q}}G$. \hfill $\Box$
\end{Proposition}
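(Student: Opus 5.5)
The plan is to read everything off Corollary 2.7 and the inclusions ${\tt Proj}(R) \subseteq {\tt GProj}(R) \subseteq \mathcal{P}(R)$ and ${\tt Flat}(R) \subseteq {\tt GFlat}(R) \subseteq \mathcal{F}(R)$ recalled above, applied with $R = \mathbb{Q}G$.

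For (i), Corollary 2.7(i) gives $\mathcal{P}(\mathbb{Q}G) = {\tt Proj}(\mathbb{Q}G)$. Squeezing ${\tt GProj}(\mathbb{Q}G)$ between these two equal classes yields ${\tt GProj}(\mathbb{Q}G) = {\tt Proj}(\mathbb{Q}G)$. For the dimension statement, a Gorenstein projective resolution of $M$ is now the same thing as a projective resolution. Hence the minimal length of a resolution of $M$ by Gorenstein projective modules equals the minimal length of a projective resolution. This holds whether the common value is finite or infinite, so $\mbox{Gpd}_{\mathbb{Q}G}M = \mbox{pd}_{\mathbb{Q}G}M$. Alternatively, one can use $\mbox{Gpd} \leq \mbox{pd}$ together with the following argument. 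If $\mbox{Gpd}_{\mathbb{Q}G}M = n < \infty$, take any projective resolution of $M$. By the standard fact that the $n$-th syzygy in any projective resolution of a module of Gorenstein projective dimension $n$ is Gorenstein projective, that syzygy is Gorenstein projective, hence projective. Therefore $\mbox{pd}_{\mathbb{Q}G}M \leq n$. Specializing to the trivial module $M = \mathbb{Q}$ gives $\mbox{Gcd}_{\mathbb{Q}}G = \mbox{cd}_{\mathbb{Q}}G$.

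For (ii), the argument is the same, using Corollary 2.7(ii): $\mathcal{F}(\mathbb{Q}G) = {\tt Flat}(\mathbb{Q}G)$ forces ${\tt GFlat}(\mathbb{Q}G) = {\tt Flat}(\mathbb{Q}G)$. Resolutions by Gorenstein flat modules are then exactly flat resolutions, so $\mbox{Gfd}_{\mathbb{Q}G}M = \mbox{fd}_{\mathbb{Q}G}M$. Taking $M = \mathbb{Q}$ gives $\mbox{Ghd}_{\mathbb{Q}}G = \mbox{hd}_{\mathbb{Q}}G$.

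There is no real obstacle, since everything is formal once Corollary 2.7 is available. The only point needing a moment's care is that equality of the two module classes gives equality of the resolution-based dimensions directly from the definitions, including in the case where the dimensions are infinite.
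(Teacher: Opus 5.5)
Your proposal is correct and follows the paper's route: the paper states this proposition as an immediate consequence of Corollary 2.7, via the inclusions ${\tt Proj}(\mathbb{Q}G) \subseteq {\tt GProj}(\mathbb{Q}G) \subseteq \mathcal{P}(\mathbb{Q}G)$ and ${\tt Flat}(\mathbb{Q}G) \subseteq {\tt GFlat}(\mathbb{Q}G) \subseteq \mathcal{F}(\mathbb{Q}G)$, after which the dimension equalities follow formally. Your alternative syzygy argument is valid but not needed.
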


\noindent
Analogously, the following result follows from Corollary 2.8.

\begin{Proposition}
Let $G$ be a torsion-free $\overline{\mathfrak{F}}$-group.

(i) Any Gorenstein projective $\mathbb{Z}G$-module is projective, 
i.e.\ ${\tt GProj}(\mathbb{Z}G) = {\tt Proj}(\mathbb{Z}G)$. For 
any $\mathbb{Z}G$-module $M$ there is an equality 
$\mbox{Gpd}_{\mathbb{Z}G}M = \mbox{pd}_{\mathbb{Z}G}M$; in particular, 
$\mbox{Gcd}_{\mathbb{Z}}G = \mbox{cd}_{\mathbb{Z}}G$.

(ii) Any Gorenstein flat $\mathbb{Z}G$-module is flat, i.e.\ 
${\tt GFlat}(\mathbb{Z}G) = {\tt Flat}(\mathbb{Z}G)$. For any 
$\mathbb{Z}G$-module $M$ there is an equality 
$\mbox{Gfd}_{\mathbb{Z}G}M = \mbox{fd}_{\mathbb{Z}G}M$; in particular, 
$\mbox{Ghd}_{\mathbb{Z}}G = \mbox{hd}_{\mathbb{Z}}G$. \hfill $\Box$
\end{Proposition}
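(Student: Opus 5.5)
The plan is to deduce Proposition 4.7 from Corollary 2.8 through the inclusions ${\tt Proj} \subseteq {\tt GProj} \subseteq \mathcal{P}$ and ${\tt Flat} \subseteq {\tt GFlat} \subseteq \mathcal{F}$, which hold over any ring. The dimension equalities then follow by a standard resolution argument.

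For (i), let $G$ be a torsion-free $\overline{\mathfrak{F}}$-group. Corollary 2.8(i) gives $\mathcal{P}(\mathbb{Z}G) = {\tt Proj}(\mathbb{Z}G)$. Hence ${\tt GProj}(\mathbb{Z}G) \subseteq \mathcal{P}(\mathbb{Z}G) = {\tt Proj}(\mathbb{Z}G)$, and together with the trivial inclusion we get equality.

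For the dimension statement, note that a resolution of length $n$ by Gorenstein projective modules is then a projective resolution of length $n$. This gives $\mbox{pd}_{\mathbb{Z}G}M \leq \mbox{Gpd}_{\mathbb{Z}G}M$, and the reverse inequality always holds. So $\mbox{Gpd}_{\mathbb{Z}G}M = \mbox{pd}_{\mathbb{Z}G}M$ for every $M$. Taking $M=\mathbb{Z}$ with trivial action yields $\mbox{Gcd}_{\mathbb{Z}}G = \mbox{cd}_{\mathbb{Z}}G$.

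For (ii), the same argument works with Corollary 2.8(ii). It gives $\mathcal{F}(\mathbb{Z}G) = {\tt Flat}(\mathbb{Z}G)$, hence ${\tt GFlat}(\mathbb{Z}G) = {\tt Flat}(\mathbb{Z}G)$. A Gorenstein flat resolution of length $n$ is then a flat resolution of length $n$, so $\mbox{fd}_{\mathbb{Z}G}M \leq \mbox{Gfd}_{\mathbb{Z}G}M$. Combined with $\mbox{Gfd}_{\mathbb{Z}G}M \leq \mbox{fd}_{\mathbb{Z}G}M$, this gives equality. Specializing to the trivial module gives $\mbox{Ghd}_{\mathbb{Z}}G = \mbox{hd}_{\mathbb{Z}}G$.

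I expect no real obstacle. All the substantive work lies in Corollary 2.8, which rests on Theorem 2.6 with $k=\mathbb{Z}$: condition (i) there holds because $G$ is torsion-free, so its only finite subgroup is trivial, and because $\mathbb{Z}$ has finite global dimension. The only point to check is that the definition of the Gorenstein dimensions via shortest resolutions matches the one used above, which it does.
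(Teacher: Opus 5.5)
Your proposal is correct and matches the paper's route: the paper presents this result as an immediate consequence of Corollary 2.8, through the inclusions ${\tt GProj}(\mathbb{Z}G) \subseteq \mathcal{P}(\mathbb{Z}G)$ and ${\tt GFlat}(\mathbb{Z}G) \subseteq \mathcal{F}(\mathbb{Z}G)$. The dimension equalities then follow, as you say, because Gorenstein projective (resp.\ flat) resolutions are then ordinary projective (resp.\ flat) resolutions.
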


\noindent
We note that the equality between the Gorenstein and the ordinary 
cohomological dimensions in Proposition 4.6(i) and Proposition 4.7(i)
has been obtained for ${\scriptstyle{{\bf LH}}}\mathfrak{F}$-groups 
in \cite[Theorem 3.5]{T14}. We conclude by stating the following 
consequence of Corollaries 3.5 and 3.6.

\begin{Proposition}
(i) Let $G$ be an $\widehat{\mathfrak{F}}(\mathbb{Q})$-group. Then,
any Gorenstein injective $\mathbb{Q}G$-module is injective, i.e.\ 
${\tt GInj}(\mathbb{Q}G) = {\tt Inj}(\mathbb{Q}G)$. For 
any $\mathbb{Q}G$-module $M$ we have 
$\mbox{Gid}_{\mathbb{Q}G}M = \mbox{id}_{\mathbb{Q}G}M$.

(ii) Let $G$ be a torsion-free $\widehat{\mathfrak{F}}$-group. 
Then, any Gorenstein injective $\mathbb{Z}G$-module is injective, 
i.e.\ ${\tt GInj}(\mathbb{Z}G) = {\tt Inj}(\mathbb{Z}G)$. For 
any $\mathbb{Z}G$-module $M$ we have  
$\mbox{Gid}_{\mathbb{Z}G}M = \mbox{id}_{\mathbb{Z}G}M$. \hfill $\Box$
\end{Proposition}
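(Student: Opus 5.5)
For both parts the plan is to deduce the statement from Corollaries 3.5 and 3.6, using the standard inclusion ${\tt Inj}(R) \subseteq {\tt GInj}(R) \subseteq \mathcal{I}(R)$, and then to compare the two dimensions.

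\emph{Step 1: Gorenstein injectives are injective.} Let $G$ be an $\widehat{\mathfrak{F}}(\mathbb{Q})$-group, and let $N$ be a Gorenstein injective $\mathbb{Q}G$-module. Then $N$ is a kernel of a totally acyclic complex of injective $\mathbb{Q}G$-modules, so $N \in \mathcal{I}(\mathbb{Q}G)$. Corollary 3.5 gives $\mathcal{I}(\mathbb{Q}G) = {\tt Inj}(\mathbb{Q}G)$, hence $N$ is injective. Together with ${\tt Inj} \subseteq {\tt GInj}$, this yields ${\tt GInj}(\mathbb{Q}G) = {\tt Inj}(\mathbb{Q}G)$. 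The torsion-free case over $\mathbb{Z}$ is word for word the same, with Corollary 3.6 in place of Corollary 3.5.

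\emph{Step 2: equality of dimensions.} The inequality $\mbox{Gid}\,M \leq \mbox{id}\,M$ always holds. For the reverse inequality, suppose $\mbox{Gid}\,M = n < \infty$. Then $M$ admits a coresolution
\[ 0 \longrightarrow M \longrightarrow E^0 \longrightarrow \cdots \longrightarrow E^n \longrightarrow 0 \]
with every $E^i$ Gorenstein injective. By Step 1 each $E^i$ is injective, so this is an injective coresolution of length $n$, and therefore $\mbox{id}\,M \leq n$. If $\mbox{Gid}\,M = \infty$, then $\mbox{id}\,M = \infty$ as well, since $\mbox{id}\,M \geq \mbox{Gid}\,M$. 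Hence the two dimensions coincide in all cases.

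I expect no step to be a real obstacle. The only point needing care is that the Gorenstein injective dimension is defined via coresolutions by Gorenstein injective modules, so once those modules are known to be injective, the minimal-length argument transfers directly.
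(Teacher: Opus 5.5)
Your proposal is correct and follows the paper's route. The paper states this result as an immediate consequence of Corollaries 3.5 and 3.6, via ${\tt Inj}(R) \subseteq {\tt GInj}(R) \subseteq \mathcal{I}(R) = {\tt Inj}(R)$, and the equality $\mbox{Gid} = \mbox{id}$ then follows exactly as in your Step 2.
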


\noindent
{\bf Remarks 4.9.}
(i) Let $k$ be a commutative ring and $G$ a group, such that all finite 
subgroups of $G$ have order invertible in $k$. Invoking Theorem 2.6, if 
$G$ is an $\overline{\mathfrak{F}}(k)$-group and any acyclic complex of 
projective $k$-modules is contractible (resp.\ Theorem 3.4, if $G$ is an 
$\widehat{\mathfrak{F}}(k)$-group and any acyclic complex of injective 
$k$-modules is contractible), we can obtain generalizations of the 
results presented above, regarding Gorenstein projective/flat (resp.\ 
Gorenstein injective) modules and the associated dimensions in this 
setting.

(ii) Let $k$ be a commutative ring, such that any acyclic complex 
of projective $k$-modules is contractible, and consider an 
$\overline{\mathfrak{F}}(k)$-group $G$. Since all cofibrant $kG$-modules
are Gorenstein projective (cf.\ \cite{CK}), there are inclusions 
${\tt Cof}(kG) \subseteq {\tt GProj}(kG) \subseteq \mathcal{P}(kG)$.
Then, \cite[Corollary 4.8]{ER1} implies that  
${\tt GProj}(kG) = \mathcal{P}(kG)$. Invoking Proposition 2.5, it follows 
that any Gorenstein projective $kG$-module is a direct summand of a 
transfinite extension of $kG$-modules of the form $\mbox{ind}_H^GM$, 
where $H \subseteq G$ is a finite subgroups and $M$ is a $k$-projective
$kH$-module.
\addtocounter{Lemma}{1}

\appendix

\section{Fibrant modules and coinduction}

\noindent
In this Appendix, we prepare the ground for the proof of Theorem 
3.3(i) and study the behaviour of fibrant modules with respect to 
coinduction from subgroups. Having fixed the commutative ring $k$, 
we proceed as in \cite[$\S $5]{ER1}. For notational simplicity, in 
both Appendices A and B, we denote the $kG$-module $B(G,k)$ that is 
associated with a group $G$ by $B(G)$ and the group class operation 
$\Phi(k)_{inj}$ by $\Phi_{inj}$.

The restriction functor from a group to a subgroup always maps 
fibrant modules to fibrant modules. Indeed, for any subgroup $H$ 
of a group $G$ the $kH$-module $B(H)$ is a direct summand of 
$\mbox{res}_H^GB(G)$. In particular, all fibrant $kG$-modules 
are necessarily  $k$-injective. Let $\mathcal{I}_0(kG)$ be the 
class of those $kG$-modules that appear as kernels of acyclic 
complexes of injective $kG$-modules, that are contractible as 
complexes of $k$-modules. Since any fibrant module is a kernel 
of an acyclic complex of injective modules, all of whose kernels 
are fibrant \cite[Proposition 4.2(iii)]{ER2}, we conclude that 
${\tt Fib}(kG) \subseteq \mathcal{I}_0(kG)$. We consider the group 
class $\mathfrak{J} = \mathfrak{J}(k)$, which consists of those 
groups $H$ that have the following property: {\em If $G$ is a group 
containing $H$ as a subgroup and $M \in \mathcal{I}_0(kH)$, then the 
diagonal $kH$-module $\mbox{Hom}_k(\mbox{res}_H^GB(G),M)$ is injective.}
In order to describe the relevance of $\mathfrak{J}$-groups in the 
study of the behaviour of fibrant modules under coinduction, we need 
the following result. It is the precise Hom-coinduction version of 
\cite[Lemma 1.1]{Sw}, that deals with diagonal tensor products and 
induction.

\begin{Lemma}
Let $G$ be a group, $H \subseteq G$ a subgroup and consider a pair 
$(M,N)$, where $M$ is a $kG$-module and $N$ is a $kH$-module. Then, 
there is an isomorphism of $kG$-modules between the diagonal $kG$-module 
$\mbox{Hom}_k \! \left( M,\mbox{coind}_H^{\, G}N \right)$ and the $kG$-module 
$\mbox{coind}_H^{\, G} \mbox{Hom}_k \! \left( \mbox{res}_H^GM,N \right)$, 
which is coinduced from the diagonal $kH$-module 
$\mbox{Hom}_k \! \left( \mbox{res}_H^GM,N \right)$.
\end{Lemma}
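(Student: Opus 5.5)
We construct the isomorphism explicitly, realizing coinduction as $\mbox{coind}_H^GN = \mbox{Hom}_{kH}(kG,N)$. In this model an element is a function $\phi : G \longrightarrow N$ with $\phi(hg) = h\phi(g)$ for $h \in H$. The $G$-action is $(g'\phi)(g) = \phi(gg')$.

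On the left-hand side, an element is a $k$-linear map $F : M \longrightarrow \mbox{coind}_H^GN$, with diagonal action
\[ (g'F)(x) = g'\bigl(F(g'^{-1}x)\bigr) . \]
On the right-hand side, an element is a function $\Psi : G \longrightarrow \mbox{Hom}_k(M,N)$ with $\Psi(hg) = h\cdot \Psi(g)$, where $h \cdot$ denotes the diagonal $H$-action $(h\cdot f)(x) = hf(h^{-1}x)$. The action of $G$ is $(g'\Psi)(g) = \Psi(gg')$.

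Define $\Theta(F) = \Psi_F$ by
\[ \Psi_F(g)(x) = F(g^{-1}x)(g) \in N . \]
This is the Hom-analogue of the map $x \otimes (g \otimes n) \mapsto g\otimes (g^{-1}x \otimes n)$ in Swan's Lemma 1.1 of \cite{Sw}. The verification proceeds in four checks.

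\textbf{(a) $H$-equivariance of $\Psi_F$.} Using $F(y)(hg) = hF(y)(g)$, we compute
\[ \Psi_F(hg)(x) = F(g^{-1}h^{-1}x)(hg) = h\,F(g^{-1}h^{-1}x)(g) = h\,\Psi_F(g)(h^{-1}x) = (h\cdot\Psi_F(g))(x) . \]

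\textbf{(b) $G$-linearity of $\Theta$.} We compute
\[ \Psi_{g'F}(g)(x) = (g'F)(g^{-1}x)(g) = F(g'^{-1}g^{-1}x)(gg') = \Psi_F(gg')(x) = (g'\Psi_F)(g)(x) . \]

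\textbf{(c) The inverse.} Given $\Psi$, set
\[ F_\Psi(x)(g) = \Psi(g)(gx) . \]
To see that $F_\Psi(x)$ lies in $\mbox{coind}_H^GN$, note that $F_\Psi(x)(hg) = \Psi(hg)(hgx) = h\Psi(g)(h^{-1}hgx) = hF_\Psi(x)(g)$. The map $F_\Psi$ is clearly $k$-linear in $x$.

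\textbf{(d) Mutual inverses.} Substituting the two formulas into each other gives $\Psi_{F_\Psi}(g)(x) = F_\Psi(g^{-1}x)(g) = \Psi(g)(x)$, and symmetrically $F_{\Psi_F} = F$.

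All maps involved are $k$-linear and natural in $(M,N)$. The only real obstacle is bookkeeping: fixing consistent left/right conventions for coinduction, since with the other convention $\mbox{Hom}_{kH}(kG,N)$ carries a right-translation action. Once the model above is fixed, each check is a one-line computation. As an alternative sanity check, one could take $G$-invariants of both sides. This yields the adjunction isomorphism
\[ \mbox{Hom}_{kG}(M,\mbox{coind}_H^GN) \simeq \mbox{Hom}_{kH}(\mbox{res}_H^GM,N) , \]
which is consistent with the construction but does not by itself suffice to prove the lemma.
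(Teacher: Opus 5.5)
Your proposal is correct and is essentially the paper's proof. The paper uses the same map $f \mapsto \phi$, $\phi(g)(m) = f(g^{-1}m)(g)$, and leaves the $kH$-linearity, $kG$-linearity and bijectivity as a routine check; you carry out these checks explicitly, including the inverse $F_\Psi(x)(g) = \Psi(g)(gx)$.
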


\vspace{-0.05in}

\noindent
{\em Proof.} For any $k$-linear map 
$f : M \longrightarrow \mbox{coind}_H^{\, G}N$ we consider the associated
$k$-linear map 
$\phi : kG \longrightarrow \mbox{Hom}_k \! \left( \mbox{res}_H^GM,N \right)$,
which is defined by letting $\phi(g)(m) = f(g^{-1}m)(g) \in N$ for all 
$g \in G$ and $m \in M$. A routine check shows that (a) $\phi$ is $kH$-linear
and hence defines an element of the coinduced module 
$\mbox{coind}_H^{\, G} \mbox{Hom}_k \! \left( \mbox{res}_H^GM,N \right)$,
(b) the map $f \mapsto \phi$ is $kG$-linear for the diagonal $kG$-module 
structure on $\mbox{Hom}_k \! \left( M,\mbox{coind}_H^{\, G}N \right)$
and the coinduced $kG$-module structure on 
$\mbox{coind}_H^{\, G} \mbox{Hom}_k \! \left( \mbox{res}_H^GM,N \right)$
and (c) the map $f \mapsto \phi$ is bijective. \hfill $\Box$

\begin{Proposition}
Let $H$ be a $\mathfrak{J}$-subgroup of a group $G$. Then:

(i) the coinduction functor $\mbox{coind}_H^{\, G}$ maps 
${\tt Fib}(kH)$ into ${\tt Fib}(kG)$ and 

(ii) the restriction functor $\mbox{res}_H^G$ maps 
$^{\perp}{\tt Fib}(kG)$ into $^{\perp}{\tt Fib}(kH)$.
\end{Proposition}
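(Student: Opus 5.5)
For (i), I will take $N \in {\tt Fib}(kH)$ and show directly that the diagonal $kG$-module $\mbox{Hom}_k(B(G),\mbox{coind}_H^{\, G}N)$ is injective. Lemma A.1, applied with $M = B(G)$, identifies it with the coinduced module
\[ \mbox{coind}_H^{\, G} \mbox{Hom}_k \! \left( \mbox{res}_H^G B(G), N \right) . \]
As recalled in \S 1.IV, ${\tt Fib}(kH) \subseteq \mathcal{I}_0(kH)$, so $N \in \mathcal{I}_0(kH)$. Since $H$ is a $\mathfrak{J}$-group, the diagonal $kH$-module $\mbox{Hom}_k(\mbox{res}_H^GB(G),N)$ is therefore injective. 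Coinduction $\mbox{coind}_H^{\, G}$ is right adjoint to the exact restriction functor, so it preserves injectivity. Hence the coinduced module is an injective $kG$-module, and $\mbox{coind}_H^{\, G}N$ is fibrant.

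For (ii), I will take $M \in {}^{\perp}{\tt Fib}(kG)$ and $N \in {\tt Fib}(kH)$ and use the Eckmann--Shapiro isomorphism in its coinduction form:
\[ \mbox{Ext}^1_{kH}(\mbox{res}_H^GM, N) \simeq \mbox{Ext}^1_{kG}(M, \mbox{coind}_H^{\, G}N) . \]
By part (i), $\mbox{coind}_H^{\, G}N \in {\tt Fib}(kG)$, so the right-hand side vanishes by the choice of $M$. Therefore $\mbox{res}_H^GM \in {}^{\perp}{\tt Fib}(kH)$.

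There is no real obstacle here. The substantive input is the definition of $\mathfrak{J}$, which is built precisely to make (i) work. The steps that need care are routine: that Lemma A.1 gives an isomorphism of $kG$-modules for the diagonal structure (it is stated that way), and that Eckmann--Shapiro holds for $\mbox{Ext}^1$ because coinduction is exact and preserves injectives.
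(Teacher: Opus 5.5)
Your proposal is correct and follows the paper's proof essentially step for step. For (i), you place fibrant modules in $\mathcal{I}_0(kH)$, apply the defining property of $\mathfrak{J}$, coinduce, and transfer injectivity via Lemma A.1; for (ii), you use the Eckmann--Shapiro isomorphism together with (i).

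One small citation slip: the inclusion ${\tt Fib}(kH) \subseteq \mathcal{I}_0(kH)$ is recorded at the start of Appendix A, not in \S 1.IV. It holds because fibrant modules are kernels of acyclic complexes of injectives all of whose kernels are fibrant, hence $k$-injective.
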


\vspace{-0.05in}

\noindent
{\em Proof.}
(i) Let $M$ be a fibrant $kH$-module; then, $M$ is contained in 
$\mathcal{I}_0(kH)$. Since $H \in \mathfrak{J}$, the diagonal 
$kH$-module $\mbox{Hom}_k(\mbox{res}_H^GB(G),M)$ is injective
and hence the coinduced $kG$-module 
$\mbox{coind}_H^{\, G} \mbox{Hom}_k(\mbox{res}_H^GB(G),M)$ is
injective as well. In view of Lemma A.1, the latter $kG$-module 
is isomorphic with the diagonal $kG$-module 
$\mbox{Hom}_k \! \left( B(G),\mbox{coind}_H^{\, G}M \right)$, 
which is therefore also injective. We conclude that 
$\mbox{coind}_H^{\, G}M \in {\tt Fib}(kG)$.

(ii) Let $M$ be a $kG$-module contained in $^{\perp}{\tt Fib}(kG)$. 
We have to show that the $kH$-module $\mbox{res}_H^GM$ is contained 
in $^{\perp}{\tt Fib}(kH)$. This is an immediate consequence of (i)
though, in view of the Eckmann-Shapiro isomorphism 
$\mbox{Ext}^1_{kH} \! \left( \mbox{res}_H^GM, \_\!\_ \right)
 \simeq \mbox{Ext}^1_{kG} \! \left( M, \mbox{coind}_H^G \_\!\_ \right)$. 
\hfill $\Box$

\begin{Lemma}
The class $\mathfrak{J}$ is subgroup-closed and contains all 
finite groups.
\end{Lemma}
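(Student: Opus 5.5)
Both assertions are handled separately; the subgroup-closure part is essentially formal, while the finite case carries the real content.

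\emph{Subgroup-closure.} Let $H \in \mathfrak{J}$ and let $L \subseteq H$ be a subgroup. Fix a group $G \supseteq L$ and a module $M \in \mathcal{I}_0(kL)$. The obstacle is that $G$ need not contain $H$, so we cannot simply apply the hypothesis on $H$ to $G$. Two routes are available.

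The first route is to form a group $G'$ containing both $G$ and $H$ with $G \cap H = L$, for instance the amalgamated free product $G' = G \ast_L H$. Since $B(G)$ is a $kG$-direct summand of $\mbox{res}^{G'}_G B(G')$, the diagonal module $\mbox{Hom}_k(\mbox{res}^G_L B(G),M)$ is a direct summand of $\mbox{Hom}_k(\mbox{res}^{G'}_L B(G'),M)$. It therefore suffices to treat the pair $(G',M)$, with $L \subseteq H \subseteq G'$.

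The second route is to coinduce $M$ up to $H$. Applying $\mbox{coind}_L^H$ to an acyclic complex of injective $kL$-modules that is $k$-contractible gives an acyclic complex of injective $kH$-modules that is still $k$-contractible, because coinduction is a product of copies of the underlying $k$-complex. Hence $\mbox{coind}_L^H M \in \mathcal{I}_0(kH)$. Since $H \in \mathfrak{J}$, the module $\mbox{Hom}_k(\mbox{res}^{G'}_H B(G'),\mbox{coind}_L^H M)$ is an injective $kH$-module. By Lemma A.1 (applied to $L \subseteq H$ with the $kH$-module $\mbox{res}^{G'}_H B(G')$), this module is isomorphic to $\mbox{coind}_L^H \mbox{Hom}_k(\mbox{res}^{G'}_L B(G'),M)$. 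Restricting to $L$ preserves injectivity, and the $kL$-module $\mbox{Hom}_k(\mbox{res}^{G'}_L B(G'),M)$ is a direct summand of the restriction of its coinduction. So it is injective, and taking the direct summand found above completes this part.

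\emph{Finite groups.} Let $H$ be finite, $G \supseteq H$, and $M \in \mathcal{I}_0(kH)$. As recalled in $\S$1.III, $\mbox{res}^G_H B(G)$ is a free $kH$-module; write it as $kH \otimes_k F$ with $F$ a free $k$-module, carrying the usual induced structure. By the adjunction in $\S$1.II, the diagonal module $\mbox{Hom}_k(kH \otimes_k F, M)$ is isomorphic to $\mbox{Hom}_k(F,M)$ coinduced from the trivial subgroup, that is, to $\mbox{coind}_1^H \mbox{Hom}_k(F,M)$. This is an instance of Lemma A.1 with the trivial subgroup, after identifying $kH \otimes_k F \cong \mbox{ind}_1^H F$ and $\mbox{ind} = \mbox{coind}$ for finite $H$.

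It now suffices to check that $\mbox{Hom}_k(F,M)$ is an injective $k$-module. Since $M$ is a kernel of a $k$-contractible acyclic complex of injective $kH$-modules, $M$ is a $k$-direct summand of an injective $kH$-module, which is $k$-injective; hence $M$ is $k$-injective. As $F$ is $k$-free, $\mbox{Hom}_k(F,M)$ is a product of copies of $M$ and is therefore $k$-injective. Coinduction from the trivial subgroup preserves injectives, so the diagonal module is injective, and $H \in \mathfrak{J}$.

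The main care point is bookkeeping: the isomorphisms between diagonal and (co)induced structures must be genuinely $kH$-linear. Lemma A.1 handles this for both parts.
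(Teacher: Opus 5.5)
Your proof is correct and follows essentially the paper's argument. Both use the amalgam $G *_L H$, the fact that $\mbox{coind}_L^H M \in \mathcal{I}_0(kH)$, restriction back to $L$ with direct-summand arguments, and, for finite groups, the $kH$-freeness of $\mbox{res}^G_H B(G)$ together with the $k$-injectivity of $M$; your detour through Lemma A.1 in the first part is harmless, since the paper simply restricts to $L$ and uses that $M$ is a $kL$-summand of $\mbox{res}_L^H\mbox{coind}_L^H M$. One small point: in the finite case, the isomorphism $\mbox{Hom}_k(kH \otimes_k F, M) \cong \mbox{coind}_1^H \mbox{Hom}_k(F,M)$ is not literally an instance of Lemma A.1 (which concerns a coinduced module in the second variable of $\mbox{Hom}_k$), though it is true, and you can bypass it entirely via the fact from $\S$1.II that $\mbox{Hom}_k(P,N)$ is an injective $kH$-module whenever $P$ is projective and $N$ is $k$-injective, exactly as the paper does.
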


\vspace{-0.05in}

\noindent
{\em Proof.}
Let $H$ be a $\mathfrak{J}$-group and consider a subgroup 
$F \subseteq H$. In order to show that $F \in \mathfrak{J}$, 
consider a group $G$ containing $F$ and a module 
$M \in \mathcal{I}_0(kF)$. Let $\Gamma = G*_FH$ be the 
amalgamated product of $G$ and $H$ along $F$ and consider 
the coinduced $kH$-module 
$N = \mbox{coind}_F^HM \in \mathcal{I}_0(kH)$. Since 
$H \in \mathfrak{J}$, the diagonal $kH$-module 
$\mbox{Hom}_k(\mbox{res}_H^{\Gamma}B(\Gamma),N)$ is injective. 
Restricting to the subgroup $F \subseteq H$, we conclude that 
the $kF$-module 
$\mbox{Hom}_k(\mbox{res}_F^{\Gamma}B(\Gamma),\mbox{res}_F^HN)$ 
is injective as well. We note that the $kF$-module $M$ is a 
direct summand of 
$\mbox{res}_F^H\mbox{coind}_F^HM = \mbox{res}_F^HN$. Moreover,
the $kG$-module $B(G)$ is a direct summand of 
$\mbox{res}_G^{\Gamma}B(\Gamma)$ and hence the restricted 
$kF$-module $\mbox{res}_F^GB(G)$ is a direct summand of 
$\mbox{res}_F^{\Gamma}B(\Gamma)$. It follows that 
$\mbox{Hom}_k(\mbox{res}_F^GB(G),M)$ is a direct summand of
$\mbox{Hom}_k(\mbox{res}_F^{\Gamma}B(\Gamma),\mbox{res}_F^HN)$,
so that $\mbox{Hom}_k(\mbox{res}_F^GB(G),M)$ is an injective 
$kF$-module as well. We have therefore proved that 
$F \in \mathfrak{J}$.

We now let $T$ be a finite group. In order to show that 
$T \in \mathfrak{J}$, consider a group $L$ containing 
$T$ as a subgroup and a module $N \in \mathcal{I}_0(kT)$;
then, $N$ is $k$-injective. Since $B(L)$ is free as a 
$kT$-module (cf.\ \cite{KT}), the diagonal $kT$-module 
$\mbox{Hom}_k \! \left( \mbox{res}_T^LB(L),N \right)$ 
is injective. \hfill $\Box$

\medskip

\noindent
The following result will provide a plethora of 
$\mathfrak{J}$-groups (besides finite groups).

\begin{Proposition}
Let $G$ be a group and $H \subseteq G$ a subgroup contained 
in 
${\scriptstyle{{\bf LH}}}\mathfrak{J} \cup 
 \Phi_{inj}\mathfrak{J}$.
Then, for any $kH$-module $M \in \mathcal{I}_0(kH)$ the 
diagonal $kH$-module 
$\mbox{Hom}_k \! \left( \mbox{res}_H^GB(G),M \right)$ is 
injective.
\end{Proposition}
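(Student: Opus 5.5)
We treat the two cases $H \in \Phi_{inj}\mathfrak{J}$ and $H \in {\scriptstyle{{\bf LH}}}\mathfrak{J}$ separately. Throughout, fix $M \in \mathcal{I}_0(kH)$ and write $D = \mbox{Hom}_k(\mbox{res}_H^GB(G),M)$ for the diagonal $kH$-module. Choose an acyclic complex ${\bf I}$ of injective $kH$-modules, $k$-contractible, with $M$ as a kernel. Since $B(G)$ is $k$-free, applying $\mbox{Hom}_k(\mbox{res}_H^GB(G),\_\!\_)$ to ${\bf I}$ gives an acyclic complex $\mbox{Hom}_k(\mbox{res}_H^GB(G),{\bf I})$. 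By \S 1.II its terms are injective $kH$-modules, it is still $k$-contractible, and $D$ is one of its kernels. So $D \in \mathcal{I}_0(kH)$, and the same holds for every kernel of this complex.

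\emph{Case $H \in \Phi_{inj}\mathfrak{J}$.} Let $C \subseteq H$ be a $\mathfrak{J}$-subgroup. Then $\mbox{res}_C^H M \in \mathcal{I}_0(kC)$, since restriction preserves injectivity of $kH$-modules ($kH$ is $kC$-free). Viewing $C$ as a subgroup of $G$ and using $C \in \mathfrak{J}$, the diagonal $kC$-module $\mbox{res}^H_C D = \mbox{Hom}_k(\mbox{res}_C^GB(G),M)$ is injective. This gives the uniform bound $\mbox{id}_{kC}\mbox{res}_C^HD = 0$, and the definition of $\Phi_{inj}$ yields $\mbox{id}_{kH}D < \infty$. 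Now $D$ is a kernel of an acyclic complex of injectives, so it has arbitrarily long injective resolutions to the left. A module in $\mathcal{I}(kH)$ of finite injective dimension is injective: if $\mbox{id} D = t$, then $D$ is a $t$-th cosyzygy of the kernel $D'$ sitting $t$ steps to the left, and dimension shifting gives $\mbox{Ext}^1(X,D) = \mbox{Ext}^{t+1}(X,D') = 0$ for all $X$, using $\mbox{id} D' \le$ finite. More precisely, one splits off the finite-dimension argument via the standard fact that $\mbox{Ext}^{i}(E,D)=0$ for $E$ injective, which forces the sequence $0\to D'' \to I \to D \to 0$ to split. Hence $D$ is injective.

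\emph{Case $H \in {\scriptstyle{{\bf LH}}}\mathfrak{J}$.} First treat $H \in {\scriptstyle{{\bf H}}}_\alpha\mathfrak{J}$ by transfinite induction on $\alpha$. The base case $\alpha = 0$ is the definition of $\mathfrak{J}$. For $\alpha > 0$, take the finite-dimensional contractible complex $X$ on which $H$ acts, with isotropy groups in ${\scriptstyle{{\bf H}}}_\beta\mathfrak{J}$ for $\beta<\alpha$. Its augmented cellular chain complex is an exact sequence
\[0\to C_n\to\cdots\to C_0\to k\to 0,\]
whose terms are sums of $\mbox{ind}_{H_\sigma}^H k$, and it is $k$-split. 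Applying $\mbox{Hom}_k(\_\!\_,D)$ gives an exact coresolution
\[0 \to D \to \mbox{Hom}_k(C_0,D) \to \cdots \to \mbox{Hom}_k(C_n,D) \to 0.\]
Each term is a product of modules $\mbox{coind}_{H_\sigma}^H \mbox{Hom}_k(\mbox{res}_{H_\sigma}^G B(G), \mbox{res}_{H_\sigma}^H M)$, by the diagonal/coinduction isomorphism (Lemma A.1 with $k$ in place of $M$). These terms are injective by induction, so $\mbox{id}_{kH}D \le n$. As in the first case, $D \in \mathcal{I}(kH)$ together with finite injective dimension gives that $D$ is injective.

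For the locally case, where $H$ is the union of its finitely generated ${\scriptstyle{{\bf H}}}\mathfrak{J}$-subgroups, this is the main obstacle: injectivity is not local in general. We would not pass to the limit directly. Instead we would use that the cotorsion pair $(^{\perp}{\tt Fib},{\tt Fib})$ reduces injectivity testing to modules of a bounded-cardinality set, or more concretely Baer's criterion on left ideals of $kH$. We would follow the argument of \cite[$\S$5]{ER1}, which handles the tensor/projective analogue for ${\scriptstyle{{\bf LH}}}$ through a Kropholler-style argument on countable unions using $\varprojlim^1$ bounds. Concretely, one shows that $\mbox{Ext}^i_{kH}(\_\!\_,D)$ vanishes after restriction to every finitely generated subgroup for $i\geq1$, and uses the Eckmann-Shapiro isomorphism together with a Jensen-type $\varprojlim$ spectral sequence to bound $\mbox{id}_{kH}D$ for countable $H$. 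An induction on cardinality via continuous chains of subgroups then gives a finite bound in general. Once a finite bound is obtained, the argument from the first case concludes that $D$ is injective.
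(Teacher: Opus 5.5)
Your reductions to finite injective dimension in the ${\scriptstyle{{\bf H}}}_{\alpha}$- and $\Phi_{inj}$-cases are fine. The gap is the step that turns finite injective dimension into injectivity.

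If $t=\mbox{id}_{kH}D\geq 1$ and $D'$ is the kernel $t$ steps to the left, dimension shifting gives $\mbox{Ext}^1_{kH}(X,D)\simeq\mbox{Ext}^{t+1}_{kH}(X,D')$. This vanishes only if $\mbox{id}_{kH}D'\leq t$. But each sequence $0\to D_{j+1}\to I\to D_j\to 0$ with $I$ injective raises a positive injective dimension by exactly one. So $\mbox{id}_{kH}D'=2t$, and the argument is circular.

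Your fallback, that $\mbox{Ext}^i_{kH}(E,D)=0$ for injective $E$, is not a property of arbitrary kernels of acyclic complexes of injectives. It is essentially the total acyclicity condition defining Gorenstein injective modules, which is exactly what is not available here.

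What is missing is a bound on $\mbox{id}_{kH}\mbox{Hom}_k(\mbox{res}_H^GB(G),M)$ that is \emph{uniform} over all $M\in\mathcal{I}_0(kH)$. With such a bound $t$, write $M$ as the $t$-th cosyzygy of some $M'\in\mathcal{I}_0(kH)$. Since $\mbox{Hom}_k(\mbox{res}_H^GB(G),\_\!\_)$ carries the injective resolution of $M'$ given by ${\bf I}$ to an injective resolution, $D$ is the $t$-th cosyzygy of $\mbox{Hom}_k(\mbox{res}_H^GB(G),M')$. That module also has injective dimension $\leq t$, so $D$ is injective.

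In the ${\scriptstyle{{\bf H}}}_{\alpha}$-case your bound $\dim X$ is already uniform, so you only need to say this. In the $\Phi_{inj}$-case, the definition of $\Phi_{inj}$ gives finiteness only for each $M$ separately, and a genuinely new argument is needed. The class $\mathcal{I}_0(kH)$ is closed under products, and $\mbox{Hom}_k(\mbox{res}_H^GB(G),\_\!\_)$ commutes with products. So a sequence $(M_j)_j$ with unbounded injective dimensions would make $\mbox{Hom}_k(\mbox{res}_H^GB(G),\prod_jM_j)$ a module of finite injective dimension with direct summands of arbitrarily large injective dimension, which is impossible. Applied to the product of the kernels of your complex ${\bf I}$, this directly forces $t=0$.

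Your ${\scriptstyle{{\bf LH}}}$-case is only a sketch, and Baer's criterion and cardinality-bounded test sets play no role. The paper inducts on $|H|$.

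For countable $H$, it observes that $H$ is an ${\scriptstyle{{\bf H}}}\mathfrak{J}$-group: it acts on a tree with stabilizers conjugate to the members of an ascending chain of finitely generated subgroups. Your telescope/$\varprojlim^1$ idea would also work here and give $\mbox{id}_{kH}D\leq 1$.

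For uncountable $H$, write $H$ as a continuous union of subgroups $H_\lambda$ of smaller cardinality. The inductive hypothesis must be used in the strong form that each $\mbox{res}^H_{H_\lambda}D$ is injective, which is what the smaller-cardinality case delivers after the corrected cosyzygy step. Eklof's lemma, applied to the kernel of $\mbox{ind}^H_{H_0}\mbox{res}^H_{H_0}L\to L$, then gives $\mbox{Ext}^n_{kH}(L,D)=0$ for all $n\geq 2$ and all $L$. This bound is uniform in $M$, so the uniform-bound argument above again yields injectivity.
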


\vspace{-0.05in}

\noindent
{\em Proof.}
We let $N = \mbox{Hom}_k \! \left( \mbox{res}_H^GB(G),M \right)$
and proceed by distinguishing three cases, according to whether 
(i) $H \in {\scriptstyle{{\bf H}}}\mathfrak{J}$, (ii)
$H \in {\scriptstyle{{\bf LH}}}\mathfrak{J}$ and (iii)
$H \in \Phi_{inj}\mathfrak{J}$.

(i) Assume that $H \in {\scriptstyle{{\bf H}}}\mathfrak{J}$ 
and use induction on the ordinal $\alpha$, which is such that 
$H \in {\scriptstyle{{\bf H}}}_{\alpha}\mathfrak{J}$. If 
$\alpha = 0$, then $H \in \mathfrak{J}$; hence, $N$ is 
injective, in view of the definition of $\mathfrak{J}$-groups. 
Let $\alpha >0$ and assume that the result is known for all
${\scriptstyle{{\bf H}}}_{\beta}\mathfrak{J}$-subgroups 
$K \subseteq G$ and all ordinals $\beta$ with $\beta < \alpha$. 
The group $H$ acts cellularly on a finite dimensional 
contractible CW-complex $X$, with each cell stabilizer 
contained in ${\scriptstyle{{\bf H}}}_{\beta}\mathfrak{J}$ 
for a suitable $\beta < \alpha$. Then, applying the functor
$\mbox{Hom}_k(\_\!\_,N)$ to the cellular chain complex of 
$X$, we obtain a short exact sequence of $kH$-modules 
\[ 0 \longrightarrow N \longrightarrow N^0 
     \longrightarrow N^1 \longrightarrow \cdots 
     \longrightarrow N^d \longrightarrow 0 , \]
where $d$ is the dimension of $X$ and each $N^i$ is a product 
of modules of the form $\mbox{coind}_K^H\mbox{res}_K^HN$, 
for a suitable 
${\scriptstyle{{\bf H}}}_{\beta}\mathfrak{J}$-subgroup 
$K \subseteq H$ for some $\beta < \alpha$. For such a
subgroup $K$ we note that
$\mbox{res}_K^HN = \mbox{Hom}_k \! \left( \mbox{res}_K^GB(G),
 \mbox{res}_K^HM \right)$ and 
$\mbox{res}_K^HM \in \mathcal{I}_0(kK)$. Invoking the 
induction hypothesis, we conclude that 
$N^i \in {\tt Inj}(kH)$ for all $i=0,1, \ldots ,d$ and hence 
$\mbox{id}_{kH} N \leq d$. This inequality holds for any 
$M \in \mathcal{I}_0(kH)$. Since $M$ is the $d$-th cosyzygy 
in an injective resolution of another $kH$-module 
$M' \in \mathcal{I}_0(kH)$, it follows that 
$N = \mbox{Hom}_k \! \left( \mbox{res}_H^GB(G),M \right)$ 
is the $d$-th cosyzygy in an injective resolution of the 
corresponding $kH$-module 
$N' = \mbox{Hom}_k \! \left( \mbox{res}_H^GB(G),M' \right)$.
We have $\mbox{id}_{kH}N' \leq d$ and hence the $kH$-module 
$N$ is injective.

(ii) Assume that $H \in {\scriptstyle{{\bf LH}}}\mathfrak{J}$
and proceed by induction on the cardinality $\kappa$ of $H$.
If $\kappa \leq \aleph_0$, then the
${\scriptstyle{{\bf LH}}}\mathfrak{J}$-group $H$ is actually
contained in ${\scriptstyle{{\bf H}}}\mathfrak{J}$ and we are
done invoking (i) above. If $\kappa$ is uncountable, then we 
may express $H$ as a continuous ascending union of subgroups
$(H_{\lambda})_{\lambda < \kappa}$, each one having cardinality
$< \kappa$. Since the class $\mathfrak{J}$ is subgroup-closed 
(cf.\ Lemma A.3), the class ${\scriptstyle{{\bf LH}}}\mathfrak{J}$
is also subgroup-closed and hence $H_{\lambda}$ is an 
${\scriptstyle{{\bf LH}}}\mathfrak{J}$-group for all $\lambda$.
Then, our induction hypothesis implies that
$\mbox{res}^H_{H_{\lambda}}N \in {\tt Inj}(kH_{\lambda})$
for all $\lambda$. We now consider a $kH$-module $L$ and let
$L_{\lambda} = 
 \mbox{ind}_{H_{\lambda}}^H \mbox{res}^H_{H_{\lambda}}L$
for all $\lambda$. We have 
\begin{equation}
 \mbox{Ext}^n_{kH} \! \left( L_{\lambda},N \right) =
 \mbox{Ext}^n_{kH_{\lambda}} \! \left(
 \mbox{res}^H_{H_{\lambda}}L,\mbox{res}_{H_{\lambda}}^HN
 \right) = 0
\end{equation}
for all $\lambda$ and $n>0$. The ascending family of subgroups
$(H_{\lambda})_{\lambda < \kappa}$ induces a continuous direct
system of $kH$-modules $(L_{\lambda})_{\lambda < \kappa}$ with
surjective structure maps, whose colimit is $L$. The short exact 
sequence of $kH$-modules
\[ 0 \longrightarrow T_{\lambda} \longrightarrow L_0
     \longrightarrow L_{\lambda} \longrightarrow 0 , \]
where $T_{\lambda}$ is the kernel of the structure map
$L_0 \longrightarrow L_{\lambda}$, is the $\lambda$-th term
of a continuous direct system of short exact sequences. The
colimit of the latter direct system of short exact sequences
is the short exact sequence of $kH$-modules
\begin{equation}
 0 \longrightarrow T \longrightarrow L_0
   \longrightarrow L \longrightarrow 0 .
\end{equation}
Here, $T$ is equal to the continuous ascending union of its
submodules $(T_{\lambda})_{\lambda < \kappa}$. Since
$T_{\lambda +1}/T_{\lambda}$ can be identified with
the kernel of the (surjective) structure map
$L_{\lambda} \longrightarrow L_{\lambda +1}$, we conclude
from (2) that
$\mbox{Ext}^n_{kH} \! \left( T_{\lambda +1}/T_{\lambda},N
 \right) =0$
for all $\lambda$ and $n>0$. Then, Eklof's lemma
\cite[Theorem 7.3.4]{EJ2} implies that
$\mbox{Ext}^n_{kH}(T,N) = 0$ for all $n>0$ and the short
exact sequence (3) implies that $\mbox{Ext}^n_{kH}(L,N)=0$
for all $n>1$. Since this holds for any $kH$-module $L$, 
we conclude that $\mbox{id}_{kH}N \leq 1$. The argument 
used at the end of the proof of (i) above applies here as 
well and shows that the $kH$-module $N$ is actually injective.

(iii) Assume now that $H \in \Phi_{inj}\mathfrak{J}$. For 
any $\mathfrak{J}$-subgroup $K \subseteq H$ the restricted 
$kK$-module 
$\mbox{res}_K^HN = \mbox{Hom}_k \! \left( \mbox{res}_K^GB(G),
 \mbox{res}^H_KM \right)$
is injective, in view of the definition of $\mathfrak{J}$-groups,
since $\mbox{res}_K^HM \in \mathcal{I}_0(kK)$. It follows
that $\mbox{id}_{kH}N < \infty$. Since this is the case for any 
$M \in \mathcal{I}_0(kH)$ and the class of these modules is 
closed under products, a standard argument shows that there 
is an upper bound on the injective dimension of $N$ for all 
$M \in \mathcal{I}_0(kH)$. We conclude, as in the last part 
of the proof of (i) above, that the $kH$-module $N$ is actually 
injective. \hfill $\Box$

\medskip

\noindent
We may reformulate Proposition A.4 as follows.

\begin{Theorem}
The class $\mathfrak{J}$ is invariant under the operation 
${\scriptstyle{{\bf LH}}} \cup \Phi_{inj}$.
\end{Theorem}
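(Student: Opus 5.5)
Theorem A.5 is essentially a restatement of Proposition A.4, so the plan is to unwind the definition of $\mathfrak{J}$ and check it against what Proposition A.4 proves. We need ${\scriptstyle{{\bf LH}}}\mathfrak{J} \subseteq \mathfrak{J}$ and $\Phi_{inj}\mathfrak{J} \subseteq \mathfrak{J}$; the reverse inclusion $\mathfrak{J} \subseteq {\scriptstyle{{\bf LH}}}\mathfrak{J}$ is automatic, since ${\scriptstyle{{\bf H}}}_0\mathfrak{J} = \mathfrak{J}$. So let $H$ be a group in ${\scriptstyle{{\bf LH}}}\mathfrak{J} \cup \Phi_{inj}\mathfrak{J}$. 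To show $H \in \mathfrak{J}$, take an arbitrary group $G$ containing $H$ as a subgroup and a module $M \in \mathcal{I}_0(kH)$. We must show that the diagonal $kH$-module $\mbox{Hom}_k(\mbox{res}_H^GB(G),M)$ is injective. This is exactly the conclusion of Proposition A.4, applied to the pair $H \subseteq G$. Hence $H \in \mathfrak{J}$.

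The only point to check is that the hypotheses used inside the proof of Proposition A.4 hold for arbitrary ambient $G$. Case (ii) of that proof uses that ${\scriptstyle{{\bf LH}}}\mathfrak{J}$ is subgroup-closed, and this follows from Lemma A.3, since $\mathfrak{J}$ is subgroup-closed. The inductive steps in cases (i) and (ii) pass to subgroups $K \subseteq H$ and use the identities
\[ \mbox{res}_K^H\mbox{Hom}_k(\mbox{res}_H^GB(G),M) = \mbox{Hom}_k(\mbox{res}_K^GB(G),\mbox{res}_K^HM) \]
and $\mbox{res}_K^HM \in \mathcal{I}_0(kK)$; both hold for any $G$. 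Since the proposition is already stated for an arbitrary $G$, no further argument is needed.

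There is no real obstacle here beyond bookkeeping. If anything needs care, it is confirming that the \emph{universal} quantifier over ambient groups $G$ in the definition of $\mathfrak{J}$ matches the arbitrary $G$ in Proposition A.4, and it does.

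Finally, combining this with Lemma A.3, which says $\mathfrak{F} \subseteq \mathfrak{J}$, we get $\widehat{\mathfrak{F}}(k) \subseteq \mathfrak{J}$, because $\widehat{\mathfrak{F}}(k)$ is the smallest class containing $\mathfrak{F}$ that is invariant under this operation. This is presumably how the theorem will be used in Appendix B.
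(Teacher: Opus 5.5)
Your proposal is correct and follows the paper's proof exactly. Like the paper, you reduce invariance to the inclusion ${\scriptstyle{{\bf LH}}}\mathfrak{J} \cup \Phi_{inj}\mathfrak{J} \subseteq \mathfrak{J}$, and then, for $H$ in that union, any ambient group $G \supseteq H$ and any $M \in \mathcal{I}_0(kH)$, you invoke Proposition A.4 to get injectivity of $\mbox{Hom}_k(\mbox{res}_H^GB(G),M)$ (your extra justification of $\mathfrak{J} \subseteq {\scriptstyle{{\bf LH}}}\mathfrak{J}$ and the consequence $\widehat{\mathfrak{F}}(k) \subseteq \mathfrak{J}$, which is Corollary A.6, are also consistent with the paper).
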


\vspace{-0.05in}

\noindent
{\em Proof.}
We have to show that 
$\mathfrak{J} = {\scriptstyle{{\bf LH}}}\mathfrak{J} 
 \cup \Phi_{inj}\mathfrak{J}$,
i.e.\ that 
${\scriptstyle{{\bf LH}}}\mathfrak{J} \cup 
 \Phi_{inj}\mathfrak{J} \subseteq \mathfrak{J}$.
Let $H$ be a group contained in 
${\scriptstyle{{\bf LH}}}\mathfrak{J} \cup 
 \Phi_{inj}\mathfrak{J}$. 
In order to prove that $H$ is a $\mathfrak{J}$-group, we 
consider a group $G$ with $H \subseteq G$ and a $kH$-module 
$M \in \mathcal{I}_0(kH)$. We have to show that the diagonal 
$kH$-module $\mbox{Hom}_k(\mbox{res}_H^GB(G),M)$ is injective. 
But this follows from Proposition A.4. \hfill $\Box$

\begin{Corollary}
The group class $\widehat{\mathfrak{F}}(k)$ is a subclass of 
$\mathfrak{J}$.
\end{Corollary}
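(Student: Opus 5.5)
The inclusion follows from the minimality of $\widehat{\mathfrak{F}}(k)$. By its definition in \S 1.I, $\widehat{\mathfrak{F}}(k)$ is the smallest class of groups that contains $\mathfrak{F}$ and is invariant under the operation ${\scriptstyle{{\bf LH}}} \cup \Phi(k)_{inj}$. Here $\Phi(k)_{inj}$ is exactly the operation written $\Phi_{inj}$ in this Appendix. It therefore suffices to check two facts about $\mathfrak{J}$: it contains $\mathfrak{F}$, and it is invariant under ${\scriptstyle{{\bf LH}}} \cup \Phi_{inj}$. The first fact is Lemma A.3 and the second is Theorem A.5.

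To avoid leaning on the word ``smallest'' alone, I would spell out the transfinite hierarchy, exactly as was done for $\overline{\mathfrak{F}}(k)$. Set $\widehat{\mathfrak{F}}(k)_0 = \mathfrak{F}$. For successor ordinals, set $\widehat{\mathfrak{F}}(k)_{\alpha+1} = {\scriptstyle{{\bf LH}}}\widehat{\mathfrak{F}}(k)_{\alpha} \cup \Phi_{inj}\widehat{\mathfrak{F}}(k)_{\alpha}$. For limit ordinals, take unions. Then $\widehat{\mathfrak{F}}(k)$ is the union of all these classes. I would prove by transfinite induction on $\alpha$ that $\widehat{\mathfrak{F}}(k)_{\alpha} \subseteq \mathfrak{J}$.
\begin{itemize}
\item The case $\alpha=0$ is Lemma A.3.
\item The limit case is immediate, since a union of subclasses of $\mathfrak{J}$ lies in $\mathfrak{J}$.
\item For the successor step, assume $\widehat{\mathfrak{F}}(k)_{\alpha} \subseteq \mathfrak{J}$.
\end{itemize}

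The only point needing care is the successor step, namely that both operations are monotone in the class they are applied to. For ${\scriptstyle{{\bf LH}}}$ this is clear from the definition. If $\mathfrak{C} \subseteq \mathfrak{D}$, then ${\scriptstyle{{\bf H}}}_{\beta}\mathfrak{C} \subseteq {\scriptstyle{{\bf H}}}_{\beta}\mathfrak{D}$ for every $\beta$, by induction on $\beta$, and hence ${\scriptstyle{{\bf LH}}}\mathfrak{C} \subseteq {\scriptstyle{{\bf LH}}}\mathfrak{D}$.

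For $\Phi_{inj}$, monotonicity needs the smaller class to be subgroup-closed. Suppose $\mathfrak{C} \subseteq \mathfrak{D}$ and $G \in \Phi_{inj}\mathfrak{C}$. Let $M$ be a $kG$-module with $\mbox{id}_{kD}\mbox{res}^G_DM \leq n$ for all $\mathfrak{D}$-subgroups $D$. Then in particular $\mbox{id}_{kC}\mbox{res}^G_CM \leq n$ for all $\mathfrak{C}$-subgroups $C$, so $\mbox{id}_{kG}M<\infty$. For the converse implication, suppose $\mbox{id}_{kG}M = t < \infty$. Restriction to any subgroup preserves injectivity, because coinduction is exact and right adjoint to restriction; hence $\mbox{id}_{kD}\mbox{res}^G_DM \leq t$ for every $D$. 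So $G \in \Phi_{inj}\mathfrak{D}$. In fact this argument shows the direction ``finite injective dimension implies a uniform bound on subgroups'' holds for any group, so $\Phi_{inj}$ is monotone without needing subgroup-closure.

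Applying both monotonicity statements with $\mathfrak{C} = \widehat{\mathfrak{F}}(k)_{\alpha}$ and $\mathfrak{D} = \mathfrak{J}$ gives
\[
\widehat{\mathfrak{F}}(k)_{\alpha+1} \subseteq {\scriptstyle{{\bf LH}}}\mathfrak{J} \cup \Phi_{inj}\mathfrak{J} \subseteq \mathfrak{J},
\]
where the last inclusion is Theorem A.5. This completes the induction.

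The main subtlety, such as it is, is this monotonicity of $\Phi_{inj}$. That operation is not obviously monotone, because its definition is an equivalence between two conditions rather than a single condition. The resolution is the observation above: one direction of the equivalence is automatic for every group, so enlarging the reference class only weakens the hypothesis of the nontrivial direction.
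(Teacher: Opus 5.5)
Your proof is correct and follows the paper's own argument: the paper derives the inclusion from Lemma A.3 and Theorem A.5 via the minimality of $\widehat{\mathfrak{F}}(k)$, and your transfinite induction, with its monotonicity checks for ${\scriptstyle{{\bf LH}}}$ and $\Phi_{inj}$, just makes that minimality explicit. One small slip: restriction preserves injectives because it is right adjoint to the exact induction functor $\mbox{ind}_D^G$, not because coinduction is exact and right adjoint to restriction (that adjunction only shows that restriction preserves projectives and that coinduction preserves injectives).
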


\vspace{-0.05in}

\noindent
{\em Proof.}
This follows from Lemma A.3 and Theorem A.5. \hfill $\Box$

\begin{Corollary}
Let $H$ be an $\widehat{\mathfrak{F}}(k)$-subgroup 
of a group $G$. Then:

(i) the coinduction functor $\mbox{coind}_H^{\, G}$ 
maps ${\tt Fib}(kH)$ into ${\tt Fib}(kG)$ and 

(ii) the restriction functor $\mbox{res}_H^G$ maps 
$^{\perp}{\tt Fib}(kG)$ into $^{\perp}{\tt Fib}(kH)$.
\end{Corollary}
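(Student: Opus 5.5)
This is a direct combination of Corollary A.6 with Proposition A.2.

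By Corollary A.6, the class $\widehat{\mathfrak{F}}(k)$ is contained in $\mathfrak{J}$. Hence any $\widehat{\mathfrak{F}}(k)$-subgroup $H$ of an arbitrary group $G$ is a $\mathfrak{J}$-subgroup of $G$. Both assertions (i) and (ii) are then exactly the conclusions of Proposition A.2 applied to the pair $H \subseteq G$.

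For the reader's orientation, here is what lies behind the two assertions. For (i), let $M \in {\tt Fib}(kH)$. Since ${\tt Fib}(kH) \subseteq \mathcal{I}_0(kH)$, the $\mathfrak{J}$-property gives that $\mbox{Hom}_k(\mbox{res}_H^GB(G),M)$ is an injective $kH$-module. Coinduction preserves injectives, and Lemma A.1 identifies
\[ \mbox{coind}_H^{\, G}\mbox{Hom}_k(\mbox{res}_H^GB(G),M) \simeq \mbox{Hom}_k(B(G),\mbox{coind}_H^{\, G}M) . \]
So the right-hand side is injective, which means $\mbox{coind}_H^{\, G}M$ is fibrant.

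For (ii), the Eckmann--Shapiro isomorphism
\[ \mbox{Ext}^1_{kH}(\mbox{res}_H^GM,\_\!\_) \simeq \mbox{Ext}^1_{kG}(M,\mbox{coind}_H^{\, G}\_\!\_) \]
reduces the claim to (i).

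There is no real obstacle, since all the work has already been done in Proposition A.4 and Theorem A.5. The only point to check is that Corollary A.6 was proved for an arbitrary ambient group $G$. This holds because the definition of $\mathfrak{J}$ quantifies over all overgroups, so the inclusion $\widehat{\mathfrak{F}}(k) \subseteq \mathfrak{J}$ is independent of $G$.
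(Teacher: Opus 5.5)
Your proposal is correct and follows the paper's own argument: Corollary A.6 gives $\widehat{\mathfrak{F}}(k) \subseteq \mathfrak{J}$, so $H$ is a $\mathfrak{J}$-subgroup of $G$, and Proposition A.2 then yields both (i) and (ii). Your unpacking of (i) via ${\tt Fib}(kH) \subseteq \mathcal{I}_0(kH)$ and Lemma A.1, and of (ii) via Eckmann--Shapiro, matches the paper's proof of Proposition A.2.
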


\vspace{-0.05in}

\noindent
{\em Proof.}
This follows from Proposition A.2, in view of Corollary 
A.6. \hfill $\Box$

\section{Fibrant modules and orthogonality}

\noindent
Having studied in Appendix A the extent to which fibrant 
modules are preserved under coinduction from subgroups, 
we shall now obtain a criterion for a module to be left 
orthogonal to the class of fibrant modules over hierarchically 
defined groups and complete the proof of Theorem 3.3(i), 
which played an important role in Section 3. We fix the 
commutative ring $k$ and follow the approach adopted in 
\cite[$\S $6]{ER1}.

\begin{Proposition}
Let $\mathfrak{C}$ be a subgroup-closed subclass of 
$\widehat{\mathfrak F}(k)$. We consider a group $G$ and 
a $kG$-module $M$, such that
$\mbox{res}_H^GM \in {^{\perp}}{\tt Fib}(kH)$ for any
$\mathfrak{C}$-subgroup $H \subseteq G$. Then, we have:

(i) $\mbox{res}_H^GM \in {^{\perp}}{\tt Fib}(kH)$ for any 
${\scriptstyle{{\bf H}}}\mathfrak{C}$-subgroup $H \subseteq G$,

(ii) $\mbox{res}_H^GM \in {^{\perp}}{\tt Fib}(kH)$ for any
${\scriptstyle{{\bf LH}}}\mathfrak{C}$-subgroup $H \subseteq G$
and

(iii) $\mbox{res}_H^GM \in {^{\perp}}{\tt Fib}(kH)$ for any 
$\Phi_{inj}\mathfrak{C}$-subgroup $H \subseteq G$.
\end{Proposition}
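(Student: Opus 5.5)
Throughout, I would reduce the question to orthogonality against a single fibrant module. Fix the group $G$ and the $kG$-module $M$ as in the statement. Since the subgroup classes involved are subgroup-closed, it suffices in each case to take an $H$ of the relevant type and a fibrant $kH$-module $F$, and to show $\mbox{Ext}^1_{kH}(\mbox{res}_H^GM,F)=0$. In fact I would prove the stronger vanishing $\mbox{Ext}^n_{kH}(\mbox{res}_H^GM,F)=0$ for all $n\geq 1$. This is free: by \cite[Proposition 4.2(iii)]{ER2}, $F$ is a kernel in an acyclic complex of injectives whose kernels are all fibrant, so dimension shifting reduces $\mbox{Ext}^n$ to $\mbox{Ext}^1$ against another fibrant module. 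The key tool is Corollary A.7. Because $\mathfrak{C}\subseteq\widehat{\mathfrak{F}}(k)$, any $\mathfrak{C}$-subgroup $K\subseteq H$ has the property that $\mbox{coind}_K^H$ preserves fibrancy, and $\mbox{res}_K^H F$ is fibrant. Together with Eckmann--Shapiro, this gives the base identity $\mbox{Ext}^n_{kH}(\mbox{res}^G_HM,\mbox{coind}_K^H\mbox{res}^H_KF)=\mbox{Ext}^n_{kK}(\mbox{res}^G_KM,\mbox{res}^H_KF)=0$.

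\textbf{Case (i).} I would induct on $\alpha$ with $H\in{\scriptstyle{{\bf H}}}_\alpha\mathfrak{C}$. Let $H$ act on a $d$-dimensional contractible complex $X$. Applying $\mbox{Hom}_k(C_*(X),F)$ gives an exact coresolution $0\to F\to F^0\to\cdots\to F^d\to0$, where each $F^i$ is a product of modules $\mbox{coind}_K^H\mbox{res}_K^HF$ with $K$ a cell stabilizer of lower height. Each such $\mbox{res}_K^HF$ is fibrant, and by the induction hypothesis (applied to $K\subseteq G$), $\mbox{Ext}^n_{kH}(\mbox{res}M,F^i)=0$ for $n\geq1$, using Shapiro and the fact that Ext commutes with products in the second variable. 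Dimension shifting along the coresolution then gives $\mbox{Ext}^n(\mbox{res}M,F)\cong\mbox{Ext}^{n+d}(\mbox{res}M,F^{\prime})$ for a suitable cokernel, and this vanishes only if I can move past the top. To get around this, I would use the trick from Proposition A.4: write $F$ as the $d$-th cosyzygy of another fibrant module $F'$ via injectives. Then $\mbox{Ext}^1(\mbox{res}M,F)=\mbox{Ext}^{d+1}(\mbox{res}M,F')$. The coresolution argument applied to $F'$ bounds the ``Ext-dimension'' of $\mbox{res}M$ against fibrant modules by $d$, since the $F'^i$ are acyclic for $\mbox{Ext}(\mbox{res}M,-)$. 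Hence this group is zero.

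\textbf{Case (ii).} I would induct on $|H|$. If $H$ is countable, it lies in ${\scriptstyle{{\bf H}}}\mathfrak{C}$, which is case (i). Otherwise, write $H$ as a continuous union of subgroups $H_\lambda$ of smaller cardinality. Here I would mimic the Eklof argument of Proposition A.4(ii), with the roles of the variables swapped: filter $\mbox{res}M$ by $L_\lambda=\mbox{ind}_{H_\lambda}^H\mbox{res}M$. The induction hypothesis together with Corollary A.7 gives $\mbox{Ext}^n_{kH}(L_\lambda,F)=\mbox{Ext}^n_{kH_\lambda}(\mbox{res}M,\mbox{res}F)=0$. Eklof's lemma applied to the sequence $0\to T\to L_0\to \mbox{res}M\to0$ then yields vanishing of $\mbox{Ext}^{n}(\mbox{res}M,F)$ for $n\ge2$. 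The cosyzygy trick upgrades this to $n=1$.

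\textbf{Case (iii).} Let $H\in\Phi_{inj}\mathfrak{C}$. I would take a special ${^{\perp}}{\tt Fib}$-precover, or better a fibrant envelope, using completeness: $0\to \mbox{res}M\to V\to U\to0$ with $V$ fibrant and $U\in{^{\perp}}{\tt Fib}(kH)$. It then suffices to show $V$ lies in ${^{\perp}}{\tt Fib}$, since the class ${^{\perp}}{\tt Fib}$ is closed under kernels of epimorphisms (it is resolving, given that ${\tt Fib}$ is coresolving). Alternatively, I would show directly that $\mbox{Hom}_k(B(H),-)$ detects this. The cleanest route I would try is this: for $\mathfrak{C}$-subgroups $K$, restricting to $K$ shows that $\mbox{res}_K V$ is both fibrant and in ${^{\perp}}{\tt Fib}(kK)$, hence injective. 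The uniform bound $0$ and the $\Phi_{inj}$ property then give $\mbox{id}_{kH}V<\infty$. A fibrant module of finite injective dimension is injective, because fibrant modules are kernels in acyclic injective complexes with fibrant kernels, so finite injective dimension forces the sequence to split. Hence $V$ is injective and so lies in ${^{\perp}}{\tt Fib}$.

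I expect case (iii) to be the main obstacle, specifically this last step: passing correctly from restrictions to $\mathfrak{C}$-subgroups to the whole of $H$.
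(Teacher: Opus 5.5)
Your proof is correct. Part (i) is essentially the paper's argument, but (ii) and (iii) take different routes.

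\textbf{Part (ii).} The paper shows $M_\lambda=\mbox{ind}_{H_\lambda}^H\mbox{res}^G_{H_\lambda}M\in{^{\perp}}{\tt Fib}(kH)$. It then passes to the direct limit $\mbox{res}_H^GM$ using thickness of ${^{\perp}}{\tt Fib}(kH)$ \cite[Proposition 4.6]{ER2} together with Gillespie's result \cite[Proposition 3.1]{G} that such a left class is closed under direct limits. You instead run the Eklof-lemma argument of Proposition A.4(ii) in the first variable. This works because you arranged $\mbox{Ext}^n_{kH}(L_\lambda,F)=0$ for all $n\geq 1$. As a result, $T_{\lambda+1}/T_\lambda\cong\ker(L_\lambda\to L_{\lambda+1})$ is orthogonal to $F$ in all positive degrees. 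The final shift $\mbox{Ext}^1(\mbox{res}M,F)\cong\mbox{Ext}^2(\mbox{res}M,F')$ uses only that $F$ is a quotient of an injective by a fibrant module. Your route is longer but needs neither Gillespie's theorem nor thickness.

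\textbf{Part (iii).} The paper uses a special precover $0\to N\to L\to\mbox{res}_H^GM\to 0$. It therefore needs closure of ${^{\perp}}{\tt Fib}(kH)$ under cokernels of monomorphisms. Your special preenvelope $0\to\mbox{res}_H^GM\to V\to U\to 0$ needs less:
\begin{itemize}
\item closure under extensions over $K$;
\item closure under kernels of epimorphisms over $H$.
\end{itemize}
This is only the hereditary half of thickness, which is elementary: ${\tt Fib}$ is closed under cokernels of monomorphisms, since fibrant modules are $k$-injective.

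\textbf{Steps in (iii) to make explicit.} First, $\mbox{res}_K^HV\in{^{\perp}}{\tt Fib}(kK)$ requires $\mbox{res}_K^HU\in{^{\perp}}{\tt Fib}(kK)$. This is Corollary A.7(ii) applied to $K\in\mathfrak{C}\subseteq\widehat{\mathfrak{F}}(k)$, and it is the only place the hypothesis on $\mathfrak{C}$ is used. In (i) and (ii), Eckmann--Shapiro and the fact that restriction preserves fibrancy suffice; Corollary A.7 is not needed there.

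Second, the claim that a fibrant module of finite injective dimension is injective does not follow from the acyclic-complex description alone. To split $0\to V^{t-1}\to E^{t-1}\to V^t\to 0$ with $V^t$ injective, you need $\mbox{Ext}^1_{kH}(E,N)=0$ for $E$ injective and $N$ fibrant. That is, injectives must lie in ${^{\perp}}{\tt Fib}(kH)$, which is part of \cite[Proposition 4.6]{ER2}. You need this fact anyway for your last sentence. With it, $\mbox{id}_{kH}V<\infty$ already gives $V\in{^{\perp}}{\tt Fib}(kH)$ via the resolving property, so the detour through injectivity of $V$ can be dropped.

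\textbf{A small slip in (i).} The sentence shifting to $\mbox{Ext}^{n+d}$ has the direction reversed. Along the coresolution one gets $\mbox{Ext}^n(\mbox{res}M,F)\cong\mbox{Ext}^{n-d}(\mbox{res}M,F^d)=0$ for $n>d$. The argument you actually conclude with is the correct one.
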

\vspace{-0.05in}
\noindent
{\em Proof.}
(i) We let $H$ be an ${\scriptstyle{{\bf H}}}\mathfrak{C}$-subgroup
of $G$ and use induction on the ordinal number $\alpha$, for which
$H \in {\scriptstyle{{\bf H}}}_{\alpha}\mathfrak{C}$. The
base case $\alpha =0$ is precisely our hypothesis on $M$. For the 
inductive step, assume that $\alpha >0$ and the result is true for 
all ${\scriptstyle{{\bf H}}}_{\beta}\mathfrak{C}$-subgroups of $G$ 
and all $\beta < \alpha$. We consider an
${\scriptstyle{{\bf H}}}_{\alpha}\mathfrak{C}$-subgroup
$H \subseteq G$ and let $N \in {\tt Fib}(kH)$. Then, as
noted in part (i) of the proof of Proposition A.4, there 
exists an exact sequence of $kH$-modules
\begin{equation}
 0 \longrightarrow N \longrightarrow N^0
   \longrightarrow N^1 \longrightarrow \cdots
   \longrightarrow N^d \longrightarrow 0 , 
\end{equation}
where $d$ is the dimension of the $H$-CW-complex witnessing
that $H \in {\scriptstyle{{\bf H}}}_{\alpha}\mathfrak{C}$
and each $N^i$ is a product of $kH$-modules of the form 
$\mbox{coind}_F^H\mbox{res}_F^HN$, with $F$ an
${\scriptstyle{{\bf H}}}_{\beta}\mathfrak{C}$-subgroup
of $H$ for some $\beta < \alpha$. The restriction functor
maps ${\tt Fib}(kH)$ into ${\tt Fib}(kF)$; in particular, 
$\mbox{res}_F^HN \in {\tt Fib}(kF)$. Our induction hypothesis 
implies that $\mbox{res}_F^GM \in {^{\perp}}{\tt Fib}(kF)$ 
and hence
\[ \mbox{Ext}^n_{kH} \! \left( \mbox{res}_H^GM,
   \mbox{coind}_F^H\mbox{res}_F^HN \right) =
   \mbox{Ext}^n_{kF} \! \left( \mbox{res}_F^GM,
   \mbox{res}_F^HN \right) = 0 \]
for such a subgroup $F$ and all $n>0$. It follows that
$\mbox{Ext}_{kH}^n \! \left( \mbox{res}_H^GM,N^i \right) =0$
for all $i$ and all $n>0$. Using dimension shifting, the exact
sequence (5) implies that 
$\mbox{Ext}_{kH}^n \! \left( \mbox{res}_H^GM,N \right) = 0$ if 
$n>d$. Since this holds for any $N \in {\tt Fib}(kH)$, we may
conclude that the $kH$-module $\mbox{res}_H^GM$ is actually contained 
in $^{\perp}{\tt Fib}(kH)$. Indeed, the fibrant $kH$-module $N$ 
is the $d$-th cosyzygy in an injective resolution of another 
fibrant $kH$-module $N'$ and hence
\[ \mbox{Ext}_{kH}^1 \! \left( \mbox{res}_H^GM,N \right) =
   \mbox{Ext}_{kH}^{d+1} \! \left( \mbox{res}_H^GM,N' \right) = 0 . \]
This completes the inductive step of the proof.

(ii) We now consider an ${\scriptstyle{{\bf LH}}}\mathfrak{C}$-subgroup 
$H$ of $G$ and proceed by induction on the cardinality $\kappa$ of $H$.
If $\kappa$ is countable, then the
${\scriptstyle{{\bf LH}}}\mathfrak{C}$-group $H$ is actually contained 
in ${\scriptstyle{{\bf H}}}\mathfrak{C}$ and we are done by (i) above. 
If $\kappa$ is uncountable, then we may express $H$ as a continuous 
ascending union of subgroups $(H_{\lambda})_{\lambda < \kappa}$, each 
one having cardinality $< \kappa$. Since the class
${\scriptstyle{{\bf LH}}}\mathfrak{C}$ is subgroup-closed,
$H_{\lambda}$ is an ${\scriptstyle{{\bf LH}}}\mathfrak{C}$-group and 
our induction hypothesis implies that
$\mbox{res}^G_{H_{\lambda}}M \in {^{\perp}}{\tt Fib}(kH_{\lambda})$
for all $\lambda$. We consider a fibrant $kH$-module $N$ and note 
that $\mbox{res}^H_{H_{\lambda}}N \in {\tt Fib}(kH_{\lambda})$ for 
all $\lambda$. Letting
$M_{\lambda} =
 \mbox{ind}_{H_{\lambda}}^H \mbox{res}^G_{H_{\lambda}}M$,
we have
\begin{equation}
 \mbox{Ext}^1_{kH} \! \left( M_{\lambda},N \right) =
 \mbox{Ext}^1_{kH_{\lambda}} \! \left(
 \mbox{res}^G_{H_{\lambda}}M,\mbox{res}_{H_{\lambda}}^HN
 \right) = 0
\end{equation}
for all $\lambda$. It follows that 
$M_{\lambda} \in {^{\perp}}{\tt Fib}(kH)$ for all $\lambda$. The 
ascending family of subgroups $(H_{\lambda})_{\lambda < \kappa}$ 
induces a direct system of $kH$-modules $(M_{\lambda})_{\lambda < \kappa}$,
whose colimit is $\mbox{res}_H^GM$. Since the left hand class 
$^{\perp}{\tt Fib}(kH)$ of the fibrant cotorsion pair is thick 
(cf.\ \cite[Proposition 4.6]{ER2}), it follows from a result by 
Gillespie \cite[Proposition 3.1]{G} that $^{\perp}{\tt Fib}(kH)$
is closed under direct limits. Hence, we have
$\mbox{res}_H^GM \in {^{\perp}}{\tt Fib}(kH)$, as needed.

(iii) Finally, let $H \subseteq G$ be a 
$\Phi_{inj}\mathfrak{C}$-subgroup. Since the cotorsion pair
$\left( {^{\perp}}{\tt Fib}(kH),{\tt Fib}(kH) \right)$ is
complete, there exists a short exact sequence of $kH$-modules
\[ 0 \longrightarrow N \longrightarrow L
     \longrightarrow \mbox{res}_H^GM \longrightarrow 0 , \]
where $L \in {^{\perp}}{\tt Fib}(kH)$ and $N \in {\tt Fib}(kH)$.
Then, for any $\mathfrak{C}$-subgroup $F \subseteq H$ we have a
short exact sequence of $kF$-modules
\[ 0 \longrightarrow \mbox{res}_F^HN \longrightarrow
     \mbox{res}_F^HL \longrightarrow \mbox{res}_F^GM
     \longrightarrow 0 . \]
Of course, the restricted $kF$-module $\mbox{res}_F^HN$ is fibrant. 
Since $F \in \mathfrak{C} \subseteq \widehat{\mathfrak{F}}(k)$,
Corollary A.7(ii) implies that
$\mbox{res}_F^HL \in {^{\perp}}{\tt Fib}(kF)$. In view of our 
assumption on $M$, we have
$\mbox{res}_F^GM \in {^{\perp}}{\tt Fib}(kF)$. Hence, the thickness
of ${^{\perp}}{\tt Fib}(kF)$ implies that $\mbox{res}_F^HN$ is also 
contained in ${^{\perp}}{\tt Fib}(kF)$. Since
${\tt Fib}(kF) \cap {^{\perp}}{\tt Fib}(kF) = {\tt Inj}(kF)$, it 
follows that the $kF$-module $\mbox{res}_F^HN$ is injective. This 
is the case for any $\mathfrak{C}$-subgroup $F$ of the
$\Phi_{inj}\mathfrak{C}$-group $H$ and hence
$\mbox{id}_{kH}N < \infty$. It follows from 
\cite[Proposition 4.6]{ER2} that $N \in {^{\perp}}{\tt Fib}(kH)$ 
and hence the thickness of ${^{\perp}}{\tt Fib}(kH)$ implies that
$\mbox{res}_H^GM \in {^{\perp}}{\tt Fib}(kH)$, as needed. \hfill $\Box$

\medskip

\noindent
A transfinite iteration of the result in Proposition B.1 has the 
following consequence.

\begin{Corollary}
The following conditions are equivalent for a group $G$ and 
a $kG$-module $M$:

(i) $\mbox{res}_H^GM \in {^{\perp}}{\tt Fib}(kH)$ for any
finite subgroup $H \subseteq G$.

(ii) $\mbox{res}_H^GM \in {^{\perp}}{\tt Fib}(kH)$ for any
$\widehat{\mathfrak{F}}(k)$-subgroup $H \subseteq G$.
\end{Corollary}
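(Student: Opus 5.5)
The implication (ii)$\rightarrow$(i) is immediate, since finite groups belong to $\widehat{\mathfrak{F}}(k)$. For (i)$\rightarrow$(ii), I would fix $M$ satisfying (i) and run a transfinite induction along the hierarchy that defines $\widehat{\mathfrak{F}}(k)$. Concretely, define $\widehat{\mathfrak{F}}(k)_0=\mathfrak{F}$, then $\widehat{\mathfrak{F}}(k)_{\alpha+1} = {\scriptstyle{{\bf LH}}}\widehat{\mathfrak{F}}(k)_{\alpha} \cup \Phi_{inj}\widehat{\mathfrak{F}}(k)_{\alpha}$, and take unions at limit ordinals. As recalled in \S 1.I, the operations are continuous, so $\widehat{\mathfrak{F}}(k)$ is the union of these classes over all ordinals $\alpha$.

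The claim to prove by induction on $\alpha$ is: \emph{each class $\widehat{\mathfrak{F}}(k)_{\alpha}$ is subgroup-closed, is contained in $\widehat{\mathfrak{F}}(k)$, and satisfies $\mbox{res}_H^GM \in {^{\perp}}{\tt Fib}(kH)$ for every $\widehat{\mathfrak{F}}(k)_{\alpha}$-subgroup $H \subseteq G$.}

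The base case $\alpha=0$ is exactly hypothesis (i), together with the fact that $\mathfrak{F}$ is subgroup-closed. At a limit ordinal, all three properties pass to the union directly.

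For the successor step, put $\mathfrak{C}=\widehat{\mathfrak{F}}(k)_{\alpha}$. By the induction hypothesis, $\mathfrak{C}$ is a subgroup-closed subclass of $\widehat{\mathfrak{F}}(k)$, and $M$ restricts into ${^{\perp}}{\tt Fib}$ on every $\mathfrak{C}$-subgroup. These are precisely the hypotheses of Proposition B.1. Part (ii) of that proposition handles ${\scriptstyle{{\bf LH}}}\mathfrak{C}$-subgroups and part (iii) handles $\Phi_{inj}\mathfrak{C}$-subgroups, which gives the orthogonality statement for $\widehat{\mathfrak{F}}(k)_{\alpha+1}$.

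The remaining properties of the successor class follow from earlier results. Subgroup-closure holds because ${\scriptstyle{{\bf LH}}}$ preserves subgroup-closed classes and because of Lemma 1.1 for $\Phi(k)_{inj}$. The inclusion in $\widehat{\mathfrak{F}}(k)$ holds because $\widehat{\mathfrak{F}}(k)$ is invariant under ${\scriptstyle{{\bf LH}}} \cup \Phi_{inj}$.

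The only delicate point is this bookkeeping. Proposition B.1 requires $\mathfrak{C}$ to be subgroup-closed and contained in $\widehat{\mathfrak{F}}(k)$, because it uses Corollary A.7 for $\mathfrak{C}$-subgroups. So these properties have to be carried along in the inductive statement, as above, rather than checked afterwards. Once that is done, every $\widehat{\mathfrak{F}}(k)$-subgroup lies in some $\widehat{\mathfrak{F}}(k)_{\alpha}$, and (ii) follows.
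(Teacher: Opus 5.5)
Your proposal is correct and follows the paper's proof. The paper also runs a transfinite induction along the hierarchy $\mathfrak{F}'(k)_{\alpha}$ defining $\widehat{\mathfrak{F}}(k)$, with hypothesis (i) as the base case and Proposition B.1 applied to $\mathfrak{C}=\mathfrak{F}'(k)_{\alpha}$ as the successor step; your explicit tracking of subgroup-closure and containment in $\widehat{\mathfrak{F}}(k)$ is what the paper encodes by calling the $\mathfrak{F}'(k)_{\alpha}$ a hierarchy of subgroup-closed classes whose union is $\widehat{\mathfrak{F}}(k)$.
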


\vspace{-0.05in}

\noindent
{\em Proof.}
(i)$\rightarrow$(ii): We recall from $\S $1.I the definition 
of the group class $\widehat{\mathfrak{F}}(k)$. There is a 
hierarchy of subgroup-closed classes ${\mathfrak F}'(k)_{\alpha}$, 
where $\alpha$ is any ordinal number, which are defined as follows: 

(a) $\mathfrak{F}'(k)_0 = \mathfrak{F}$ is the class of finite 
    groups,

(b) ${\mathfrak F}'(k)_{\alpha +1} = 
     {\scriptstyle{{\bf LH}}}{\mathfrak F}'(k)_{\alpha} 
     \cup \Phi_{inj} {\mathfrak F}'(k)_{\alpha}$
    for any ordinal $\alpha$ and 

(c) $\mathfrak{F}'(k)_{\alpha} = 
     \bigcup_{\beta < \alpha}\mathfrak{F}'(k)_{\beta}$
    for any limit ordinal $\alpha$. 
\newline
Then, $\widehat{\mathfrak{F}}(k)$ is the class consisting of those 
groups which are contained in ${\mathfrak F}'(k)_{\alpha}$, for 
some ordinal $\alpha$. Hence, (ii) will follow if we show that 
$\mbox{res}_H^GM \in {^{\perp}}{\tt Fib}(kH)$ for any
${\mathfrak F}'(k)_{\alpha}$-subgroup $H \subseteq G$ for any 
ordinal $\alpha$. The latter claim is proved using transfinite 
induction: The claim for $\alpha =0$ is precisely condition (i), 
whereas Proposition B.1 (applied to the case where the 
subgroup-closed class $\mathfrak{C}$ therein is 
${\mathfrak F}'(k)_{\alpha}$) provides the inductive step.

(ii)$\rightarrow$(i): This is obvious, since
$\mathfrak{F} \subseteq \widehat{\mathfrak{F}}(k)$. \hfill $\Box$

\medskip

\noindent
We can finally prove Theorem 3.3(i).

\medskip

\noindent
{\em Proof of Theorem 3.3(i).}
Assume that $M \in {^{\perp}}{\tt Fib}(kG)$. Since finite groups 
are contained in $\widehat{\mathfrak{F}}(k)$, Corollary A.7(ii) 
implies that $\mbox{res}_H^GM \in {^{\perp}}{\tt Fib}(kH)$ for 
any finite subgroup $H \subseteq G$.\footnote{In fact, since 
$G$ is an $\widehat{\mathfrak{F}}(k)$-group and
$\widehat{\mathfrak{F}}(k)$ is subgroup-closed, it follows from 
Corollary A.7(ii) that $\mbox{res}_H^GM \in {^{\perp}}{\tt Fib}(kH)$
for any subgroup $H \subseteq G$.}

Conversely, assume that $\mbox{res}_H^GM \in {^{\perp}}{\tt Fib}(kH)$
for any finite subgroup $H \subseteq G$. Corollary B.2 then implies
that $\mbox{res}_H^GM \in {^{\perp}}{\tt Fib}(kH)$ for any 
$\widehat{\mathfrak{F}}(k)$-subgroup $H \subseteq G$. Since 
$G \in \widehat{\mathfrak{F}}(k)$, $G$ is an 
$\widehat{\mathfrak{F}}(k)$-subgroup of itself and hence 
$M \in {^{\perp}}{\tt Fib}(kG)$. \hfill $\Box$

\medskip

{\small {\sc Department of Mathematics,
             University of Athens,
             Athens 15784,
             Greece}}

{\em E-mail addresses:} {\tt emmanoui@math.uoa.gr} and
                        {\tt otalelli@math.uoa.gr}

\end{document}